\documentclass[11pt]{amsart}
\usepackage{amsmath}
\usepackage{amssymb, verbatim}
\usepackage{pstricks,pst-node,pst-plot}
\usepackage{comment}
\usepackage{color}
\usepackage{extarrows}
\usepackage{tikz}
\usepackage[colorlinks=true,
            linkcolor=black,
            citecolor=black,
            urlcolor=black]{hyperref}
\usepackage{enumitem}
\usepackage{tikz-cd}
\usepackage[numbers]{natbib}
\title[]{Special Lagrangian Submanifolds and Circle Collapse on K3}

\author[S. Picard]{S\'ebastien Picard}
\address{Department of Mathematics, University of British Columbia, Vancouver, Canada}
  \email{spicard@math.ubc.ca}

\author[F. Trinca]{Federico Trinca}
  \email{f.trincamath@gmail.com}

\theoremstyle{plain}
\newtheorem{thm}{Theorem}[section]
\newtheorem{prop}[thm]{Proposition}
\newtheorem{defn}[thm]{Definition}
\newtheorem{lem}[thm]{Lemma}

\theoremstyle{definition}
\newtheorem{ex}[thm]{Example}
\newtheorem{rk}[thm]{Remark}

\numberwithin{equation}{section}

\newcommand{\p}{\partial}
\newcommand{\be}{\begin{equation}}
\newcommand{\bea}{\begin{eqnarray}}
\newcommand{\eea}{\end{eqnarray}} 
\newcommand{\ee}{\end{equation}}

\renewcommand{\leq}{\leqslant}
\renewcommand{\geq}{\geqslant}
\renewcommand{\le}{\leqslant}

\renewcommand{\epsilon}{\varepsilon}

\setlist[enumerate]{
  itemsep=0.3em, leftmargin=2.5em
  }
\setlist[itemize]{
  itemsep=0.3em, leftmargin=2.5em
  }

\begin{document}

\maketitle

\begin{abstract}
We consider $K3$ surfaces collapsing to a three-dimensional affine base. We show that certain affine lines on the base lift to degenerating sequences of special Lagrangian two-spheres and tori in the collapsing $K3$ surface. In particular, we construct special Lagrangian two-spheres connecting pairs of Taub-NUT bubbles. These examples fit into the broader program of reconstructing special submanifolds from graphs and combinatorial data on a collapsed affine limit.
\end{abstract}


\section{Introduction}

\subsection{Background}
Calabi--Yau manifolds often contain special submanifolds. We are interested in submanifolds $S \subseteq X$ which minimize volume in their homology class, so that
\[
{\rm Vol}(S) \leq {\rm Vol}(S'), \quad S' \in [S].
\]
There are two distinguished classes of such submanifolds on a Calabi--Yau manifold $(X,\omega,\Omega)$: holomorphic submanifolds and special Lagrangian submanifolds. As Calabi--Yau structures often come in families $X_\epsilon$, one way to study their space of special submanifolds is to send the parameters $\epsilon$ to their extreme limits to obtain a simplified structure $B$. A gluing construction then recovers special submanifolds on $X_\epsilon$ near this degenerate limit $B$.

It was noticed during the development of the SYZ program \cite{SYZ} that in many setups \cite{ChiuLin, ChiuLiLin, DonaldsonScaduto, GrossSiebert, KontSoibel, Mikhalkin, ParkerThesis} there appears to be a correspondence between:
\begin{itemize}
\item A class of weighted graphs on an affine space $B$ with singularities, and
  \item Minimizing submanifolds along a sequence of compact Ricci-flat manifolds $X_\epsilon$ collapsing to $B$.
  \end{itemize}

From the perspective of differential geometry, a central problem in this program is the following: given a collapsing degeneration $X_\epsilon \rightsquigarrow B$ and a suitable graph $\Gamma \subseteq B$, construct a sequence of minimizing submanifolds converging to $\Gamma$. Even the case of a graph with a single edge can lead to substantive geometric analysis, as seen for example in recent work of Chiu-Lin \cite{ChiuLin} where special Lagrangian submanifolds on compact Calabi--Yau threefolds are reconstructed from a geodesic path between two points on a lower dimensional base.

  The purpose of this paper is to explore these ideas on certain Calabi--Yau twofolds. In this setting, a hyperk\"ahler rotation takes special Lagrangian submanifolds to holomorphic curves and vice versa. We make the choice to take the special Lagrangian perspective, though our results could be equivalently stated in terms of holomorphic curves. But first, we give a trivial example of affine lines lifting to special submanifolds in collapsing Calabi--Yau geometries.
  \begin{ex} \label{ex-trivial}
    Take $\mathbb{T}^4 = \mathbb{R}^4 / \Lambda$ with the degenerating metrics
    \[
g_\epsilon =  dx^2 + dy^2 +dz^2 + \epsilon^2 dw^2.
    \]
    Sending $\epsilon \rightarrow 0$ collapses circle fibers to an affine base.
    \[
      (\mathbb{T}^4,g_\epsilon) \rightarrow (B^3,g_E), \quad B^3=\mathbb{T}^3_{x,y,z}.
    \]
  Let $L_0 \subseteq B^3$ be an embedded straight line. Rotate coordinates on $B^3$ so that $L_0$ points in the $x$-direction. Then $L_0$ lifts to a special Lagrangian 2-torus $L_\epsilon \subseteq (\mathbb{T}^4,g_\epsilon, \omega_\epsilon, \Omega_\epsilon)$ given by
\[
L_\epsilon = \{ y=c_1, \ z = c_2 \},
\]
where the holomorphic volume form $\Omega_\epsilon$ and K\"ahler form $\omega_\epsilon$ are given by
\[
\omega_\epsilon = dx \wedge dy + \epsilon dw \wedge d z, \quad \Omega_\epsilon = (dx+idy) \wedge (\epsilon dw+i dz).
\]
The submanifolds $L_\epsilon \subseteq (\mathbb{T}^4, g_\epsilon)$ form a thin tubes that converges to a straight line.
    \end{ex}

 We look for the analog of this basic example with the ambient Calabi--Yau twofold $\mathbb{T}^4$ replaced by a $K3$ surface. For this, we can use Sun-Zhang's result \cite{SunZhang} to list possible collapsing limits of hyperk\"ahler structures on $K3$ to a lower dimensional base $B$; such a limit $B$ must be either:

\begin{itemize}
  \item Affine space $B^1$ of dimension 1: a unit interval.
  \item Affine space $B^2$ of dimension 2: punctured $\mathbb{S}^2$.
\item Affine space $B^3$ of dimension 3: $\mathbb{T}^3/\mathbb{Z}_2$.
  \end{itemize}

  \begin{rk}
There are examples of collapsing $K3$ sequences in all three settings: Foscolo \cite{FoscoloK3} for $B^3$, Gross-Wilson \cite{GrossWilson} for $B^2$, and Hein-Sun-Viaclovsky-Zhang \cite{Hein-Sun-Viaclovsky-Zhang} for $B^1$. There are also examples where the limit is non-collapsing, namely the Kummer construction, where the limiting affine space $B^4 = \mathbb{T}^4 / \mathbb{Z}_2$; see e.g. \cite{Donaldson-Kummer}. For more on the theory of collapsing $K3$ surfaces, see \cite{OO}.
\end{rk}

\begin{rk}
Beyond $K3$ surfaces, there is also a general theory of collapsing Ricci-flat metrics on Calabi-Yau manifolds; see e.g. \cite{Hein-Tosatti, STZ, Szekelyhidi, Tosatti} and the recent survey by Tosatti \cite{TosattiSurvey}.
\end{rk}

In this paper, we consider the case of $K3 \rightsquigarrow B^3$, as this is the analog of our guiding Example \ref{ex-trivial}. The study of the interaction between affine collapse and special submanifolds in the remaining cases of affine bases $B^1$ and $B^2$ remains open.

\subsection{Main result}
We now set up the statement of the main theorem. Let $B = \mathbb{T}^3/\mathbb{Z}_2$. First, we specify combinatorial data on $B$. Define a datum $\mathcal{D}$ specified by:
\begin{enumerate}
\item A weight $m_j \in \mathbb{Z}_{\geq 0}$ associated to each of the eight singular points $q_j \in B_{\rm sing}$.
  \item A collection of $n$ points $p_i \in B_{\rm reg}$ each with a weight $k_i \in \mathbb{Z}_{>0}$ such that
  \[
\sum_{j=1}^8 m_j +  \sum_{i=1}^n k_i = 16.
  \]
  \end{enumerate}

  From datum $\mathcal{D}$, Foscolo \cite{FoscoloK3} constructs families of $K3$ surfaces $(M_\epsilon,g_\epsilon)$ collapsing to $B$. Each weight describes a bubble shrinking to its corresponding point: the weight $m_j$ indicates that the bubble shrinking to $q_j$ belongs to the family of ALF spaces of type $D_{m_j}$, and the weight $k_i$ indicates that the bubble shrinking to $p_i$ belongs to the family of ALF spaces of type $A_{k_i-1}$.
  
  We will prove:

  \begin{thm} \label{mainthm}
    Let $B = \mathbb{T}^3/\mathbb{Z}_2$ be given the datum $\mathcal{D}$ specified as above. Let $L_0 \subseteq B_{\rm reg}$ be an affine line connecting a pair of marked points labelled $p_{1}, p_{2} \in B_{\rm reg}$. There exists a sequence $(M_\epsilon, g_\epsilon, \omega_\epsilon, \Omega_\epsilon)$ of $K3$ surfaces such that:
    \begin{enumerate}
    \item There is a special Lagrangian 2-sphere $L_\epsilon$ converging to $L_0$ in the Gromov-Hausdorff sense.
      \[
\begin{tikzcd}[column sep=large, row sep=large]
L_\epsilon  
  \arrow[r, "\epsilon \to 0"] 
  \arrow[d, hook] 
& 
L_0 
  \arrow[d, hook] \\
(M_\epsilon,g_\epsilon)
  \arrow[r, "\epsilon \to 0"] 
& 
(B, g_E)
\end{tikzcd}
\]
      \item There are $k_1-1$ and $k_2-1$ special Lagrangian 2-spheres shrinking to the vertex points $p_{1}$ and $p_{2}$, respectively.
      \end{enumerate}
  \end{thm}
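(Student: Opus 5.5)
Rather than build a special Lagrangian by a gluing argument, I would use hyperk\"ahler rotation. On a $K3$ surface, special Lagrangian submanifolds for $(\omega_\epsilon,\Omega_\epsilon)$ are exactly the holomorphic curves for a rotated complex structure $J'$. The problem therefore reduces to two things: finding a homology class $\alpha\in H_2(M_\epsilon,\mathbb{Z})$ with $\alpha^2=-2$ represented by an embedded sphere near $L_0$, and tuning the phase so that $\alpha$ becomes a $(1,1)$-class for $J'$ that is of minimal area.

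The topological step comes first. In Foscolo's construction, away from the bubbles $M_\epsilon$ is a circle bundle over $B_{\rm reg}\setminus\{p_i\}$ with Euler number $k_i$ around $p_i$. Over the segment $L_0$ the circle fiber collapses at both endpoints, to a NUT point in the Taub-NUT / $A_{k_i-1}$ bubble. Sweeping the fiber along $L_0$ then gives a $2$-sphere $S_\epsilon$ with self-intersection $-2$. For $k_i>1$ one first resolves so that each endpoint meets one chosen nut in the multi-Taub-NUT. Inside each $A_{k_i-1}$ ALF bubble, the $k_i-1$ exceptional spheres join adjacent nuts along segments of the bubble's own affine base, so they give the vanishing spheres of part (2).

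Next I would choose the parameters. Foscolo's family carries free parameters: the positions of the $p_i$ (rotated so that $L_0$ points along $e_1$), the hyperk\"ahler triple $(\omega_1,\omega_2,\omega_3)$, and the Gibbons--Hawking data on the bubbles. I would arrange the triple so that $\omega_2,\omega_3$ pair to zero with $[S_\epsilon]$ and with the classes of the bubble spheres, while $\omega_1$ pairs positively with them. On the model region this is the computation in Example~\ref{ex-trivial}: if $L_0$ is the $x$-direction with Gibbons--Hawking form $\omega_1 = dx\wedge\theta + V\, dy\wedge dz$, cyclically, then the fiber sweep over $\{y,z\ \text{const}\}$ pairs to zero with $\omega_2$ and $\omega_3$. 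These periods are exact on the nose in the glued metric only up to error, so one either adjusts the gluing parameters (Foscolo's construction allows a perturbation of the triple), or uses the global Torelli theorem together with surjectivity of the period map. Concretely, I would pick the actual hyperk\"ahler $K3$ near Foscolo's metric whose period triple is orthogonal to the chosen $(-2)$-classes. Openness of the period domain and continuity of the construction in the parameters then give a sequence $(M_\epsilon,g_\epsilon)$ with the same GH limit.

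Once $\alpha=[S_\epsilon]$ is $J_1$-algebraic with $\alpha^2=-2$, Riemann--Roch gives that $\alpha$ or $-\alpha$ is effective, and $\omega_1\cdot\alpha>0$ picks $\alpha$. The main obstacle, which I expect to be the hardest step, is showing that $\alpha$ is represented by an \emph{irreducible} smooth rational curve rather than a sum of components. I would argue as follows. If $\alpha$ were reducible, the components would be $(-2)$-curves whose classes are orthogonal to $\omega_2,\omega_3$ and have smaller $\omega_1$-area. By the genericity of the parameter choice, I would arrange that the only roots orthogonal to $\omega_2,\omega_3$ lie in the lattice spanned by $\alpha$ and the bubble spheres. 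The bubble sphere classes have area $O(\epsilon\cdot\text{size})$, and $\alpha$ is not a positive combination of the others plus a root of smaller area, so irreducibility follows, possibly after replacing $\alpha$ by a reflection. Finally, convergence of $L_\epsilon$ to $L_0$ should follow from the area bound $\omega_1\cdot\alpha\to \text{length}(L_0)$. Calibrated area concentration, together with monotonicity, forces the curve to lie within $o(1)$ of the fiber sweep of $L_0$; any piece straying far would cost area of order its diameter, exceeding the calibrated lower bound given by the projected length. That yields the Gromov--Hausdorff convergence in part (1).
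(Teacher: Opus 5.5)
Your route is genuinely different from the paper's: you use Riemann--Roch for a hyperk\"ahler-rotated complex structure, where the paper uses a perturbative construction. The first step is sound. If $\alpha=[S_\epsilon]$ is of type $(1,1)$ for $J_{\boldsymbol v}$ with $\alpha^2=-2$ and $\omega_{\boldsymbol v}\cdot\alpha>0$, then $\alpha$ is effective. But that only gives an effective divisor in a homology class. The two statements that actually make up the theorem are exactly the ones left unproved.

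\emph{Irreducibility.} Your argument rests on the claim that, for generic parameters, the only $(-2)$-classes of type $(1,1)$ for $J_{\boldsymbol v}$ lie in the span of $\alpha$ and the bubble spheres. Foscolo's construction gives you no control of the full Picard lattice of $(M_\epsilon,J_{\boldsymbol v})$. You would need the period map of the \emph{constrained} family (all chain classes aligned) to be a submersion onto the constrained period domain. If you instead leave the family via Torelli, you lose the explicit metric on which everything else depends.

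\emph{GH convergence.} The monotonicity argument fails as stated in the collapsed regime. The injectivity radius of $g_\epsilon$ is $O(\epsilon)$, so ordinary monotonicity only sees scale $\epsilon$. Passing to local covers gives at best $\mathrm{Area}(C\cap B_r(x))\gtrsim\epsilon r$. For $r$ of order one this is the same order as your whole budget $\omega_{\boldsymbol v}\cdot\alpha\approx 2\pi\epsilon\,\mathrm{length}(L_0)$, so an excursion does not cost ``area of order its diameter'' beyond what is available. You would need a sharp slicing lower bound (area at least $2\pi\epsilon\,\mathrm{length}(L_0)(1-o(1))$ for any curve in the class) plus a quantitative excess argument. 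Even then you only get Hausdorff closeness of the image. GH convergence in the theorem concerns the curve with its induced intrinsic metric, and that needs something like $C^1$-closeness to the fibre sweep. The same localisation problem arises for the $k_i-1$ spheres in part (2).

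The paper avoids both problems. The ingredient missing from your period step is that the monopole points are collinear along the direction of $L_0$. Then the long sphere and every bubble sphere are special Lagrangian for one reference structure. Their reference periods are exactly $\mathrm{vol}(S)\,(1,0,0)$, because the gluing errors are exact. Hence Foscolo's triple $\underline{\boldsymbol{\tilde\omega}}_\epsilon=\underline{\boldsymbol{\omega}}_\epsilon+d\underline{\boldsymbol{a}}_\epsilon+B\cdot\underline{\boldsymbol{\omega}}_\epsilon$ has periods $\mathrm{vol}(S)\,(1+B^1{}_1,B^1{}_2,B^1{}_3)$ on all of them. A single rotation $\boldsymbol v$ aligns them simultaneously, and no Torelli argument is needed. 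Without collinearity, the bubble-sphere periods, of size $O(\epsilon^2)$, point along the different segment directions. Aligning them means changing the bubble geometry at its own scale, i.e.\ moving the nuts onto a line anyway.

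The paper then produces the special Lagrangian as a normal graph $\exp(Ja^\sharp)$ over the explicit sphere, with $\|a\|_{C^{1,\alpha}_\beta}<\epsilon^{3/2+1/50}$. So embeddedness, the correct class and GH convergence come for free. The price is the linear analysis of $d+d^\dagger$ on the collapsing sphere:
\begin{itemize}
\item weighted H\"older spaces;
\item the Liouville-type result on the cigar;
\item the $\epsilon^{-1/2-}$ loss from the translation and rotation forms $\zeta\,dx$, $\zeta\,\theta$, which is absorbed by the $\epsilon^{2+1/15}$ error of the approximate solution.
\end{itemize}
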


\begin{figure}

\begin{tikzpicture}[scale=1.1, line cap=round, line join=round]

\tikzset{
  redline/.style={black!75!black, very thick, fill=gray!6},
  dottedred/.style={black!75!black, thick, dashed, dash pattern=on 2pt off 3pt},
  bigcircle/.style={black, thick, dashed, dash pattern=on 9pt off 7pt},
  ball/.style={draw=black!75!black, very thick, fill=gray!6}
}

\def\a{2.25}   
\def\b{0.85}   
\def\rL{0.25}
\def\rR{0.22}

\draw[bigcircle] (-3.25,0) circle (1.15);
\draw[bigcircle] ( 3.25,0) circle (1.15);

\draw[redline] (0,0) ellipse [x radius=\a, y radius=\b];


\draw[dottedred] (0,\b) arc[start angle=90,end angle=-90,x radius=0.28,y radius=\b];

\draw[ball] (-3,0) circle (\rL);
\draw[ball] (-2.25-\rL,0) circle (\rL);

\draw[ball] (2.25+\rR,0) circle (\rR);
\draw[ball] (2.25+3*\rR,0) circle (\rR);
\draw[ball] (2.25+5*\rR,0) circle (\rR);

\foreach \x/\r in {-3/\rL, -2.50/\rL, 2.47/\rR, 2.91/\rR, 3.35/\rR}{
  \draw[dottedred] (\x-\r*0.78,0.02) arc[start angle=185,end angle=355,x radius=\r*0.78,y radius=\r*0.32];

}

\node at (0,1.15) {$L_\varepsilon$};

\draw[->, thick] (0,-0.95) -- (0,-1.90);
\node[below] at (0,-2.10) {$L_0$};

\coordinate (P1) at (-2.05,-2.10);
\coordinate (P2) at ( 2.05,-2.10);

\draw[black, thick] (P1) -- (P2);
\fill (P1) circle (3pt);
\fill (P2) circle (3pt);

\node[below left] at (P1) {$(p_1,k_1)$};
\node[below right] at (P2) {$(p_2,k_2)$};

\draw[->, thick] (-3.25,-1.15) -- (-2.15,-1.90);
\draw[->, thick] ( 3.25,-1.15) -- ( 2.15,-1.90);

\end{tikzpicture}

  \caption{A weighted graph with one edge and two vertices lifts to a collection of special Lagrangian 2-spheres on $K3$.} \label{fig:weightedgraph}
  \end{figure}

  \begin{rk} \label{rmk-collinear}
    A technical input in the construction of the ambient Calabi--Yau structure $(M_\epsilon,g_\epsilon,\omega_\epsilon,\Omega_\epsilon)$ in Theorem \ref{mainthm} is that the $A_{k_1-1}$, $A_{k_2-1}$ bubbles shrinking to $p_1$, $p_2$ are toric, i.e. the points $\{ y_a \}$ specified in the Gibbons--Hawking ansatz are collinear. One should think of the segments connecting these collinear points $\{ y_a \}$ as the 2-spheres shrinking to each vertex point $p_1$ and $p_2$, while the long segment connecting the pair of points $p_1$, $p_2$ corresponds to a half-line leaving from the outer left/right-most point $y_a$ (see Figure \ref{fig:weightedgraph}). Without the collinear condition, the ambient Calabi--Yau structure would need to be realigned by hyperk\"ahler rotation at each special submanifold.
\end{rk}

We restate Theorem \ref{mainthm} with the $k-1$ shrinking spheres neglected. Let $(M_\epsilon,g_\epsilon) \rightarrow (B,g_E)$ be a sequence of collapsing $K3$ surfaces to a 3-dimensional base \cite{FoscoloK3}. Let $O_1 \subseteq M_\epsilon$ be a multi-Taub-NUT bubble with $k_1$ distinct monopole points with zero center of mass shrinking to $p_1 \in B_{\rm reg}$, with similar notation for $(O_2,k_2,p_2)$. Let
\[
(O_1, k_1, p_1) \longleftarrow L_\epsilon \longrightarrow (O_2,k_2,p_2)
\]
be a 2-sphere joining one monopole point in one bubble to a monopole point in the other. We give more details on this 2-sphere in \S \ref{sec:bubble2bubble}, and we require that $L_\epsilon \subseteq M_\epsilon$ is the circle-bundle preimage of a segment $L_0 \subseteq B$ along the line through the bubble centers $p_i$. Our result is that for $\epsilon>0$ small, then $L_\epsilon$ can be deformed to a nearby special 2-sphere, i.e. either a holomorphic curve or special Lagrangian submanifold depending on choice of ambient Calabi-Yau structure.


  \begin{rk}
    Here is a simplified version of the theorem with all $m_j=0$. The purpose of allowing $m_j>0$ is so that Foscolo's construction spans an open set of the $K3$ moduli space, but since our construction of submanifolds takes place far from the bubbles associated to the weights $m_j$, we may as well set these parameters to zero. Consider a weighted graph $\Gamma \subseteq B_{\rm reg}$ with two vertices and an edge:
    \[
\Gamma = (V,E,w), \qquad 
V = \{p_1,p_2\}, \quad 
E = \{L_0\}, \quad 
w(p_i)=k_i.
\]
Assume the balancing condition $k_1+k_2=16$. The edge $L_0$ lifts to an elongated special Lagrangian 2-sphere collapsing to a line and the weighted vertices $(p_i,k_i)$ lift to a collection of $k_i-1$ special Lagrangian spheres shrinking to the vertex point. As the Calabi--Yau degenerates, these special submanifolds either stretch out and converge to a piecewise linear graph or shrink to a point. 
\end{rk}

There is another type of graph on $B$ which is readily lifted.

 \begin{thm} \label{mainthm2}
    Let $B = \mathbb{T}^3/\mathbb{Z}_2$ be given the datum $\mathcal{D}$ specified as above. Let $L_0 \subseteq B_{\rm reg}$ be an embedded affine line which avoids the marked points $p_i \in B_{\rm reg}$. There exists a sequence $(M_\epsilon, g_\epsilon, \omega_\epsilon, \Omega_\epsilon)$ of $K3$ surfaces such that:
    \begin{itemize}
    \item There is a special Lagrangian torus $\Sigma_\epsilon$ with $(\Sigma_\epsilon,g_\epsilon) \rightarrow (L_0,g_E)$ in the Gromov-Hausdorff sense.
      \end{itemize}
    \end{thm}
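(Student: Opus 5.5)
We plan to prove Theorem \ref{mainthm2} by a perturbation argument modelled on Example \ref{ex-trivial}, using the structure of Foscolo's collapsing metrics away from the bubbles. On the region of $M_\epsilon$ lying over $B_{\rm reg}$ minus small balls around the marked points $p_i$ and the singular points $q_j$, Foscolo's metric $g_\epsilon$ is, to leading order, a Gibbons--Hawking metric $g_\epsilon \approx V\, g_E + \epsilon^2 V^{-1}\theta^2$. Here $V = 1 + \epsilon h$, $h$ is a harmonic function on $B$ with poles at the marked points, and $\theta$ is a connection on the circle bundle with $d\theta = *dV$, up to exponentially small corrections. Rotate coordinates so that $L_0$ points in the $x$-direction. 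For the model hyperk\"ahler triple $\omega_1 = V\,dy\wedge dz + \epsilon\, dx\wedge\theta$ (and cyclic), we take $\omega_\epsilon$ and $\Omega_\epsilon$ so that the torus $\Sigma^0_\epsilon = \pi^{-1}(L_0)$ is Lagrangian for $\omega_\epsilon = \omega_2$ or $\omega_3$ and special for $\Omega_\epsilon$ in the model. Equivalently, after hyperk\"ahler rotation, $\Sigma^0_\epsilon$ is the preimage of a line along the Gibbons--Hawking direction, which is exactly holomorphic for the complex structure $I_1$ in the model. Since $L_0$ is a closed embedded geodesic in $\mathbb{T}^3/\mathbb{Z}_2$ avoiding the $p_i$, the circle bundle restricted to $L_0$ is trivial, and $\Sigma^0_\epsilon$ is indeed a $2$-torus.

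The first step is to check that $\Sigma^0_\epsilon$ is exactly special Lagrangian for the Gibbons--Hawking model structure. The restriction $\omega_2|_{\Sigma}$ involves only $V\,dz\wedge dx$ and $\epsilon\, dy\wedge\theta$. Both vanish on $\pi^{-1}(L_0)$, since $dy = dz = 0$ there. Likewise $\omega_3|_\Sigma \propto \epsilon\, dz\wedge\theta = 0$, so the surface is calibrated by $\omega_1$. Hence the only error comes from the difference between Foscolo's genuine hyperk\"ahler triple and the model. On the relevant region this difference is $O(e^{-c/\epsilon})$, or at worst $O(\epsilon^N)$, in the rescaled $C^k$ norms where the circle fiber has unit length.

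The second step is the deformation. Using McLean's theorem, nearby Lagrangians are graphs of closed $1$-forms $\alpha$ on $\Sigma$, and the special Lagrangian condition reduces to $d\alpha = -\omega|_\Sigma$ and $d^*\alpha = f$, where $f$ is the error in the phase. Alternatively, and more cleanly, we hyperk\"ahler rotate and deform $\Sigma^0_\epsilon$ as a holomorphic curve: since the torus has trivial normal bundle, the obstruction space is $H^1$ of the normal bundle, which pairs with a genuine deformation direction. Linearize $F(\alpha) = (\omega|_{\Sigma_\alpha}, \mathrm{Im}\,\Omega|_{\Sigma_\alpha})$ at $\alpha = 0$. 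This gives $\alpha \mapsto (d\alpha, d^*\alpha\, \mathrm{vol})$, which is surjective onto exact $2$-forms and mean-zero functions. The mean-zero condition and cohomological constraints are automatic, because $[\omega]|_\Sigma$ and $[\mathrm{Im}\,\Omega]|_\Sigma$ are cohomological. We arrange these to vanish by choosing the family within Foscolo's construction so that the periods of $\omega_\epsilon$ and $\mathrm{Im}\,\Omega_\epsilon$ on $[\Sigma]$ are zero; the class $[\Sigma]$ is preserved, and periods can be adjusted by a small hyperk\"ahler rotation. We then run the implicit function theorem or a contraction mapping in H\"older spaces on $\Sigma$.

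The main obstacle is uniformity in $\epsilon$. The torus $\Sigma$ has one circle of length $\sim\epsilon$ and one of length $\sim 1$, so the inverse of the Laplacian on $\Sigma$ and the quadratic terms in $F$ must be controlled with respect to this degenerating geometry. I would work in coordinates rescaled by $\epsilon$ along the fiber, that is, on the universal local cover $\mathbb{R}\times S^1_\epsilon$ unwrapped. In these coordinates the geometry has bounded curvature and injectivity radius comparable to $\epsilon$. The first eigenvalue of $d+d^*$ on $1$-forms orthogonal to harmonic forms is bounded below by a constant: the modes that are constant on the fiber see the unit-length circle, and the fiber-nonconstant modes have eigenvalues $\gtrsim \epsilon^{-2}$. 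Hence the inverse has norm $O(1)$, or at worst polynomial in $\epsilon^{-1}$. Because the error is exponentially small, or $O(\epsilon^N)$, this loss is absorbed, and the fixed point exists for $\epsilon$ small. Finally, $\Sigma_\epsilon$ is $C^0$-close to $\pi^{-1}(L_0)$, whose fibers have length $O(\epsilon)$. This gives the Gromov--Hausdorff convergence $(\Sigma_\epsilon, g_\epsilon) \to (L_0, g_E)$.
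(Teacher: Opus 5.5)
Your architecture matches the paper's. The approximate solution is $\pi^{-1}(L_0)$, which is exactly special Lagrangian for the bulk Gibbons--Hawking structure. The genuine triple is realigned by a small rotation so that the periods of $\tilde\omega_\epsilon$ and ${\rm Im}\,\tilde\Omega_\epsilon$ on $\Sigma_\epsilon$ vanish. A contraction for $d+d^\dagger$ on $\mathcal{H}^1{}^\perp$ then finishes, with the quadratic term controlled in charts on the unwrapped local universal cover.

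You differ only in the uniform inverse bound. The paper argues by contradiction, using the collapsing-torus Lemma \ref{lem:toricollapse} plus Schauder estimates; you use a spectral gap, and that works:
\begin{itemize}
\item On a closed surface, the nonzero spectrum of the Hodge Laplacian on $1$-forms is the nonzero scalar spectrum counted twice (via $df$ and $\star df$).
\item The scalar spectrum is stable under the uniform quasi-isometry to the flat model $g^{\rm cyl}_\epsilon$, so $\|a\|_{L^2}\lesssim\|(d+d^\dagger)a\|_{L^2}\lesssim\epsilon^{1/2}\|(d+d^\dagger)a\|_{C^0}$ for $a\in\mathcal{H}^1{}^\perp$.
\item Local elliptic estimates on unit balls of the unwrapped cover give $\|a\|_{C^0}\lesssim\epsilon^{-1/2}\|a\|_{L^2}+\|(d+d^\dagger)a\|_{C^{0,\alpha}}$, so the volume factors cancel.
\end{itemize}
Your route is more explicit. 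The compactness route is the one that carries over to the weighted estimate on the elongated sphere, where the cigar caps destroy the fibrewise Fourier decomposition.

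The step that is wrong is your size of the error. Foscolo's genuine triple does not differ from the Gibbons--Hawking model on the bulk by $O(e^{-c/\epsilon})$ or $O(\epsilon^N)$. His theorem gives only polynomial control: $\|\underline{\boldsymbol{a}}_\epsilon\|_{C^k_\delta}\le C_k\epsilon^{(11-2\delta)/5}$ and $|B|\le C\epsilon^{(17-4\delta)/10}$ (see \eqref{genuine-est00}, \eqref{B-est}). Nothing in the construction improves this. The correction $d\underline{\boldsymbol{a}}_\epsilon+B\cdot\underline{\boldsymbol{\omega}}_\epsilon$ solves a global elliptic problem sourced by polynomially small interpolation errors in the collars $\{\epsilon^{2/5}\le\rho\le 2\epsilon^{2/5}\}$. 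Only its non-$S^1$-invariant part decays exponentially away from them.

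What is true is that after realignment the $B$-terms cancel pointwise on $\Sigma_\epsilon$. This is because $\underline{\boldsymbol{\omega}}_\epsilon|_{\Sigma_\epsilon}=(d{\rm vol},0,0)$ and $\boldsymbol{v}$ satisfies \eqref{integral-cond}. That leaves $\tilde\omega_\epsilon|_{\Sigma_\epsilon}=(d\underline{\boldsymbol{a}}_\epsilon\cdot\boldsymbol{v})|_{\Sigma_\epsilon}$, and similarly for ${\rm Im}\,\tilde\Omega_\epsilon$. By \eqref{da-est} this gives $\|\mathcal{F}(0)\|_{C^{0,\alpha}}\lesssim\epsilon^2$.

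Consequently your fallback is not available: an inverse that is only polynomially bounded in $\epsilon^{-1}$ cannot be absorbed by an $O(\epsilon^N)$ error, because no such error bound exists. The argument closes only because the inverse is uniformly bounded. You must commit to the $O(1)$ bound and replace the error claim with this polynomial estimate.

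Also, Gromov--Hausdorff convergence needs $C^1$ rather than $C^0$ closeness, i.e.\ closeness of the induced metrics $f_a^*\tilde g_\epsilon$ and $f_0^*g_\epsilon$. Your $C^{1,\alpha}$ fixed point does supply this, but the closing sentence should say so.
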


    This case of an edge with no vertices is much easier, and is in a sense the direct analog of Example \ref{ex-trivial} on $K3$. Indeed, if one were to run through the construction from the smooth affine base $B^3=\mathbb{T}^3$ without ever taking the $\mathbb{Z}_2$ quotient, the reconstructed Calabi--Yau manifold would be $\mathbb{T}^4$ instead of $K3$ and all the weights would be zero.

To summarize, we study two types of weighted graphs on the base $B_{\rm reg}$. An edge with no vertices corresponds to a torus. An edge with two weighted vertices corresponds to an elongated sphere with the vertices corresponding to a collection of shrinking spheres. Unlike other setups e.g. \cite{EsfaLi, GrossSYZ2012, Mikhalkin}, trivalent graphs cannot appear in this setting. The limiting objects of special submanifolds under $K3 \rightsquigarrow B^3$ collapse cannot allow two edges meeting at a nontrivial angle since the direction of a single edge determines the direction of the ambient hyperk\"ahler rotation of Calabi-Yau structure, and this observation also explains the collinearity condition in Remark \ref{rmk-collinear}.
    
\subsection{Related works}
We now review related results in the literature. Our result can be viewed as a $K3$ analog to the theorem of Chiu--Lin \cite{ChiuLin} which lifts paths on a base $\mathbb{P}^1$ to degenerating special Lagrangian 3-spheres on a compact Calabi--Yau threefold. More broadly, there are several works suggesting a correspondence between affine graphs and special submanifolds in collapsing Calabi--Yau geometries. For example, Mikhalkin \cite{Mikhalkin} established a correspondence between algebraic curves in $(\mathbb{C}^*)^2$ and weighted graphs in $\mathbb{R}^2$, and Chiu--Li--Lin \cite{ChiuLiLin} relate tropical curves to special Lagrangians in the cotangent bundle of the torus.

As for the ambient collapsing geometry, the construction of $K3$ surfaces collapsing to a 3-dimensional affine base was introduced by Foscolo \cite{FoscoloK3}, and our work relies extensively on the estimates developed there. In this setting, Oliveira \cite{Oliveira} studied geodesics, Hattori \cite{Hattori} studied the Dirichlet energy of maps, and non-holomorphic minimal spheres were constructed in \cite{FoscoloK3}, \cite{XuwenZhu} and \cite{FoscoloTrinca}. Related collapsing phenomena for gravitational instantons were studied by Salm \cite{Salm}.

\subsection{Organization}
  The paper is organized as follows:

  \textbf{Section 2.} Here we set notation and recall the definitions of hyperk\"ahler four-manifolds and their submanifolds.

  \textbf{Section 3.} We review the multi-Taub-NUT local model. These will appear as bubbles on the compact $K3$. A building block in our construction of elongated 2-spheres is a cigar submanifold in multi-Taub-NUT. A cigar in this context is a half-line coming out of a point in multi-Taub-NUT space where the $S^1$ fiber has collapsed. We will use these cigars to cap off the ends of a cylinder. 
  
  \textbf{Section 4.} We give an account of Foscolo's construction of $K3$ surfaces \cite{FoscoloK3}. Foscolo's construction can be understood in the context of the reconstruction problem \cite{GrossSYZ2012} in the SYZ program. We start from a singular affine base $B = \mathbb{T}^3 / \mathbb{Z}_2$ and look to construct a complex manifold which is a circle fibration over $B$ on a large open set. There are 8 point singularities $q_j \in B$. We also select $n$ points $p_i \in B_{\rm reg}$, which indicate the location of possible singular fibers. Let $\Lambda = \{q_j, p_i \}$. The selection of weights satisfying the balancing condition
  \[
\sum_{j=1}^8 m_j + \sum_{i=1}^n k_i = 16, \quad m_j \geq 0, \quad k_i>0
  \]
  determines a (non-principal) $S^1$-bundle
  \[
X(B_0) \rightarrow B_0
  \]
  over the base $B_0 = B \backslash \Lambda$. Foscolo's theorem is about compactifying $X(B_0)$ to a compact complex manifold $X(B)$. In the SYZ setup of torus bundles over an affine base, a similar compactification problem is studied by methods of algebraic geometry by Kontsevich-Soibelman \cite{KontSoibel} and Gross-Siebert \cite{GrossSiebert}.

  The compactification $X(B)$ comes in a family $X_\epsilon := X(B)_\epsilon$ with hyperk\"ahler metrics $g_\epsilon$. Over an open set $U_\epsilon \subseteq X_\epsilon$ tending to full volume as $\epsilon \rightarrow 0$, then $(X_\epsilon,g_\epsilon)$ is an $S^1$-fibration over $B_0$ with circle fibers collapsing as $\epsilon \rightarrow 0$.

  To construct this compactification $X_\epsilon$, Foscolo glues-in some local models to the boundary components of $X(B_0)$: an ALF space of type $D_{m_j}$ is attached near $q_j$ and a multi-Taub-NUT space with $k_i$ monopoles is attached near $p_i$. These local models are bubbles which shrink to their respective points $q_j$ or $p_i$ in the limit as $\epsilon \rightarrow 0$.

  \textbf{Section 5.} With the ambient geometry reviewed, we move on to constructing special Lagrangian submanifolds and start the proof of Theorem \ref{mainthm}. The construction of the $k_i-1$ shrinking spheres in Theorem \ref{mainthm} is essentially already known \cite{LotayOli, FoscoloK3}. The main technical content of this work is constructing an elongated special Lagrangian sphere spanning a pair of bubble regions. For this, the first step is to construct an approximate solution $L_\epsilon$. It is a thin cylinder connecting two bubble regions with its two ends capped off. The second step is to select an ambient Calabi--Yau structure $(\omega_\epsilon,\Omega_\epsilon)$ which solves the necessary condition
  \[
\int_{L_\epsilon} \omega_\epsilon = 0, \quad \int_{L_\epsilon} {\rm Im} \, \Omega_\epsilon = 0.
  \]
This ambient Calabi--Yau structure is chosen such that both the long sphere and the shrinking spheres in Theorem \ref{mainthm} satisfy the necessary topological requirement to be deformed to special Lagrangian. The final step is to deform $L_\epsilon$ from an approximate special Lagrangian to a genuine special Lagrangian submanifold. The deformation is generated by a normal vector field of the form $J a^\sharp$ where $a \in \Omega^1(L_\epsilon)$. The special Lagrangian equation is schematically of the form
  \[
\mathcal{F}(a) = 0,
  \]
  and to solve this, we Taylor expand
  \[
\mathcal{F}(a) = \mathcal{F}(0) + \mathcal{L}(a) + \mathcal{Q}(a)
  \]
  and seek a fixed point $\mathcal{N}(a)=a$ of 
  \[
\mathcal{N}(a) = \mathcal{L}^{-1}(-\mathcal{F}(0) - \mathcal{Q}(a)).
\]
The fixed point theorem requires $\| \mathcal{N}(u)- \mathcal{N}(v)\| < \|u-v\|$, and so we will need bounds on $\| \mathcal{L}^{-1} \|$, smallness of the approximate solution $\| \mathcal{F}(0) \|$, and a quadratic estimate $\| \mathcal{Q}(u) - \mathcal{Q}(v) \|$.

\textbf{Section 6.} This section contains the proof of Theorem \ref{mainthm2} on special Lagrangian tori $\Sigma_\epsilon$ lifting a line $L_0 \subseteq B_{\rm reg}$ avoiding the marked points $p_i \in B_{\rm reg}$. The proof follows readily from Foscolo's error estimates and a perturbation argument since the submanifold does not enter the bubbles.

\textbf{Section 7.} We return to the elongated sphere of Theorem \ref{mainthm}. As typical for gluing constructions, the main step of the proof is the estimate of the inverse of the linearized operator $\| \mathcal{L}^{-1} \|$. The linearized operator is $\mathcal{L}=d+d^\dagger$ on the 2-sphere $L_\epsilon$ and we are able to estimate
\[
\| a \|_{C^{1,\alpha}_\beta} \leq \epsilon^{-\frac{1}{2} - \iota} \| (d+d^\dagger) a \|_{C^{0,\alpha}_{\beta-1}}, \quad a \in \Omega^1(L_\epsilon)
\]
where $\beta<0$ and $\iota>0$ are small parameters, and $\rho^\beta$ is a weight in the H\"older norms for a suitable weight function $\rho$. In fact, we are able to obtain a uniform estimate with $C>1$ independent of $\epsilon$
\[
\| a^\perp \|_{C^{1,\alpha}_\beta} \leq C \| (d+d^\dagger) a^\perp \|_{C^{0,\alpha}_{\beta-1}}, \quad a^\perp \in \mathcal{H}^\perp
\]
when restricting to 1-forms in a certain subspace $\mathcal{H}^\perp$. The subspace of bad 1-forms $\mathcal{H}$ is spanned by $\kappa_1=\zeta dx$ and $\kappa_2=\zeta d \psi$, where $\zeta(x)$ is a fixed cutoff function localizing us to the center of the cylindrical region of $L_\epsilon$, and $dx, d \psi$ are the 1-forms corresponding to translation and rotation on the cylinder. We can then $L^2$-decompose an arbitrary 1-form by $a = a^\perp + \lambda_1 \kappa_1 + \lambda_2 \kappa_2$ and estimate the $\kappa_i$ manually. Due to the ratio
\[
\frac{\| \kappa_i \|_{C^1}}{\| \kappa_i \|_{L^2}} = O(\epsilon^{-1/2})
\]
on collapsing cylinders with metric $dx^2 + \epsilon^2 d \psi^2$, we pick up a power of $- \frac{1}{2}$ in the bound for the inverse of the linearized operator $d+d^\dagger$. Nevertheless, the approximate solution is so close to a genuine solution that the poor bounds for $\| \mathcal{L}^{-1} \|$ can be absorbed and the fixed point argument can be completed.

\section*{Acknowledgments}
We thank Shih-Kai Chiu, Lorenzo Foscolo, Yu-Shen Lin, and Jason Lotay for helpful comments and discussions.

\section{Hyperk\"ahler four-manifolds}
\subsection{Definition}
In this section, we recall the fundamental properties of hyperk\"ahler four-manifolds. Our approach follows Donaldson \cite{Donaldson-2forms}, who defines a hyperk\"ahler four-manifold in the following way:

\begin{defn}
  Let $X$ be an oriented four-manifold. A triple
  \[
\underline{\boldsymbol{\omega}} = (\omega_1,\omega_2,\omega_3)
\]
of two-forms $\omega_i \in \Omega^2(X)$ is said to be a hyperk\"ahler triple if
\begin{equation} \label{hk-pointwise}
\omega_1^2=\omega_2^2=\omega_3^2>0, \quad \omega_i \wedge \omega_j = 0 \ {\rm for} \ i \neq j,
\end{equation}
and the closedness conditions $d \omega_i = 0$ hold.
\end{defn}

We now review how to construct from the triple $\underline{\boldsymbol{\omega}}$ the following structures: a Ricci-flat metric, a sphere of complex structures, and holomorphic volume forms. First, from a hyperk\"ahler triple $\underline{\boldsymbol{\omega}}$, we obtain a metric tensor $g_{\underline{\boldsymbol{\omega}}}$ by the formula
\begin{equation} \label{hk-metric}
g_{\underline{\boldsymbol{\omega}}}(V,W) = {1 \over 6} \frac{\epsilon^{ijk} \iota_V \omega_i \wedge \iota_W \omega_j \wedge \omega_k}{\omega_1^2/2}
\end{equation}
 and the relation \eqref{hk-pointwise} can be rewritten as
\begin{equation} \label{hk-pointwise2}
{1 \over 2} \omega_a \wedge \omega_b = \delta_{ab} \, d {\rm vol}_{g_{\underline{\boldsymbol{\omega}}}}.
\end{equation}
Formula \eqref{hk-metric} can be found in e.g. \cite{FineYao}, and from the perspective of $G_2$ geometry on $X \times T^3$ it is a natural tensor to form. We refer to the metric $g_{\underline{\boldsymbol{\omega}}}$ as the hyperk\"ahler metric associated to $\underline{\boldsymbol{\omega}}$.

\begin{ex}
  Let $\underline{\boldsymbol{\omega}}$ be a hyperk\"ahler triple. At any point $p \in X$, we may choose local coordinates $(x_1,x_2,x_3,\psi)$ such that $\underline{\boldsymbol{\omega}}$ assumes the normal form
  \begin{align}
    \omega_1|_p &= dx_1 \wedge d \psi + dx_2 \wedge dx_3, \label{normalform}\\
    \omega_2|_p &= dx_2 \wedge d \psi + dx_3 \wedge dx_1, \nonumber\\
    \omega_3|_p &= dx_3 \wedge d \psi + dx_1 \wedge d x_2. \nonumber
  \end{align}
  In these coordinates, direct calculation of \eqref{hk-metric} gives
  \[
   g_{\underline{\boldsymbol{\omega}}}|_p = \delta_{ij}, \quad d {\rm vol}_{ g_{\underline{\boldsymbol{\omega}}}} |_p = dx_1 \wedge dx_2 \wedge dx_3 \wedge d \psi.
    \]
\end{ex}

From now on, we drop the subscript on the metric tensor and simply write $g$ instead of $g_{\underline{\boldsymbol{\omega}}}$.

\subsection{Complex structure}
A hyperk\"ahler structure $(X,\underline{\boldsymbol{\omega}})$ on a four-manifold defines a triple of complex structures $(J_1,J_2,J_3)$. These are defined by the conditions
\begin{equation} \label{hk-cplxstr}
\omega_i(\cdot, \cdot) = g(J_i \cdot, \cdot).
\end{equation}
A direct calculation using \eqref{hk-metric} and \eqref{hk-cplxstr} verifies that the complex structures $J_i$ satisfy the quaternionic relations
\begin{equation} \label{hk-quater}
J_i^2=-{\rm id}, \quad J_1 J_2 = J_3.
\end{equation}
We will see below that the Nijenhuis tensor for each $J_i$ is zero so that these complex structures are integrable. Therefore $\omega_i$ is the K\"ahler form of the K\"ahler manifold $(X, J_i, g)$. 

\subsubsection{The sphere}
For any vector $v \in \mathbb{S}^2 \subseteq \mathbb{R}^3$, we also obtain a complex structure denoted $J_v$ given by
\[
J_v = v_1 J_1 + v_2 J_2 + v_3 J_3,
\]
with associated K\"ahler form $\omega_v = \sum v_i \omega_i$.

\subsubsection{Holomorphic volume forms}
Fix a vector $v \in \mathbb{S}^2$ and assemble the complex manifold $(X,J_v)$. Consider
\begin{equation} \label{hyperK-holoform}
\Omega_v = \omega_{v_1^\perp} + i \omega_{v_2^\perp}, \quad \Omega_v \in \Omega^{2,0}(X,J_v)
\end{equation}
where $v_1^\perp,v_2^\perp \in \mathbb{S}^2$ are a pair of orthogonal vectors which are orthogonal to the chosen $v$. Note that there is an $S^1$-family of such choices, and hence an $S^1$-family of possible $\Omega_v$. A pair $(\omega_v,\Omega_v)$ will be referred to as a Calabi-Yau structure, and such a pair satisfies the Calabi-Yau equations
\begin{align} \label{su2}
  & \ d \omega_v = 0, \quad d \Omega_v = 0,\\
  & \ {1 \over 2} \Omega_v \wedge \bar{\Omega}_v = \omega_v^2, \quad \omega_v \wedge \Omega_v=0.
\end{align}
In other words, $\Omega_v$ is a holomorphic volume form on $(X,J_v)$ with constant norm. A standard calculation in K\"ahler geometry gives
\[
R_{i \bar{j}} = - \partial_i \partial_{\bar{j}} \log \omega_v^2 =0
\]
and so the Riemannian metric $g$ associated to $\underline{\boldsymbol{\omega}}$ is Ricci-flat.

\subsubsection{Example}
To be explicit, we can consider the complex structure $J_1$. Then
\[
\Omega_1 = \omega_2 + i \omega_3,
\]
is the holomorphic volume form associated to the complex structure $(X,J_1)$, and we can write $\underline{\boldsymbol{\omega}} = (\omega_1, {\rm Re} \, \Omega_1, {\rm Im} \, \Omega_1)$. We see that $\Omega_1$ is of type $(2,0)$. Indeed, if $V \in T^{0,1}(X,J_1)$ and $Y \in T_{\mathbb{C}}(X)$, then that
\[
\Omega_1(V,Y) = 0,
\]
follows from $J_2J_1=-J_3$ and $J_1|_{T^{0,1}(J_1)} = -i$. Let us now check that the Nijenhuis tensor of $J_1$ vanishes. This is equivalent to the following property: for any $\eta \in \Omega^{1,0}(X,J_1)$, then $(d \eta)^{0,2}=0$. This follows from $\Omega_1 \in \Omega^{2,0}$ being closed and nowhere vanishing, since
\[
0 = d( \eta \wedge \Omega_1) = d \eta \wedge \Omega_1
\]
and so $(d \eta)^{0,2}=0$.

\subsection{Special submanifolds}
We will study two kinds of special submanifolds contained in a hyperk\"ahler four-manifold. Fix a direction $v \in \mathbb{S}^2$ and a Calabi-Yau structure $(X,J_v,\omega_v,\Omega_v)$, and for the moment drop the subscript $v$.

\subsubsection{Special Lagrangian submanifolds} A submanifold $L \subset (X,g,\omega,\Omega)$ of real dimension 2 is special Lagrangian if there exists an angle $e^{-i \theta} \in \mathbb{C}^*$ such that
\[
 {\rm Re} \, e^{-i \theta} \Omega|_L = d {\rm vol}_g,
\]
so that $L$ is calibrated by ${\rm Re} \, e^{-i \theta} \Omega$. This condition is equivalent (see \cite{HarveyLawson}) to
\begin{equation} \label{slag-eq}
\omega|_L= 0, \quad {\rm Im} \, e^{-i \theta} \Omega|_L = 0.
\end{equation}
Harvey-Lawson proposed these special submanifolds as distinguished objects on a complex manifold (not necessarily K\"ahler) with trivial canonical bundle (Section V.3. in \cite{HarveyLawson}). In the particular case when the ambient complex manifold admits a K\"ahler structure $\omega$, then Lagrangian submanifolds of $(X,\omega,\Omega)$ are special Lagrangian if and only if their mean curvature is zero. The special Lagrangian equations were later rediscovered in the string theory literature by Becker-Becker-Strominger \cite{BBS} as a constraint for supersymmetric cycles on Calabi-Yau threefolds.

One implication of the special Lagrangian equations \eqref{slag-eq} over a K\"ahler Calabi-Yau manifold $(X,\omega,\Omega)$ is that these submanifolds minimize the area functional of the associated metric $g$ in a given homology class $[L] \in H_2(X,\mathbb{R})$. The minimizing volume is given by the topological quantity
\[
{\rm vol}(L) = {\rm Re} \, e^{-i \theta} [\Omega] \cdot [L].
\]

\subsubsection{Holomorphic submanifolds}
Next, a submanifold $C \subseteq X$ of real dimension 2 is a holomorphic curve if
\[
\omega|_C = d {\rm vol}_g.
\]
In other words, $C$ is calibrated by $\omega$. The fact that $C$ is a complex analytic submanifold is an application of Wirtinger's inequality; see e.g. (1.25) in \cite{Demailly} for details. Holomorphic curves minimize volume in a given homology class $[C] \in H_2(X,\mathbb{R})$ with minimal volume
\[
{\rm vol}(C) = [\omega] \cdot [C].
\]

\subsubsection{Exchange} \label{subsect:exchange}
On hyperk\"ahler four-manifold, we can interchange holomorphic curves with special Lagrangian submanifolds by hyperk\"ahler rotation. Indeed, suppose $L \subseteq (X,\omega_v,J_v)$ with $v \in \mathbb{S}^2$ is a holomorphic curve on a hyperk\"ahler four-manifold. Then
\[
\omega_{v^\perp}|_L = 0
\]
for all $v^\perp \in \mathbb{S}^2$ perpendicular to $v$; this gives an $S^1$-family of such $v^\perp$. Fix such a $v^\perp$, and next select a pair of vectors $w_1, w_2$ such that $(v^\perp, w_1, w_2)$ is an oriented orthonormal basis; this also comes in an $S^1$-family of choices. Using $(w_1,w_2)$, we define the holomorphic volume form $\Omega_{v^\perp}=\omega_{w_1}+i \omega_{w_2}$ as in \eqref{hyperK-holoform}. Choose an angle $e^{-i \theta}$ by writing $v= \cos \theta w_1 + \sin \theta w_2$. Then we have
\[
{\rm Im} \, (e^{-i \theta} \Omega_{v^\perp}) = 0,
\]
and $L \subseteq (X, \omega_{v^\perp}, \Omega_{v^\perp})$ is a special Lagrangian submanifold. See Figure \ref{fig:hol2slag}.

On the other hand, if $(v^\perp, w_1, w_2)$ is an oriented orthonormal basis and $L \subseteq (X, \omega_{v^\perp}, \omega_{w_1}+i \omega_{w_2})$ is special Lagrangian with angle $e^{-i \theta}$, then
\[
  {\rm Re} \, e^{-i \theta} \Omega = \omega_v, \quad v = \cos \theta w_1 + \sin \theta w_2
  \]
and $L \subseteq (X,\omega_v, J_v)$ is a holomorphic curve.

The exchange described above can be computed explicitly by the normal form \eqref{normalform} of $\underline{\boldsymbol{\omega}}$, using for example $v=e_1$, $T_p L = {\rm span} \{ e_1, \partial_\psi \}$, $v^\perp = e_3$, $w_1 = \cos \theta e_1 - \sin \theta e_2$, and $w_2 = \sin \theta e_1 + \cos \theta e_2$.

\begin{figure}
\begin{tikzpicture}[scale=1]
  \def\R{1.8}      
  \def\tilt{25}    

  \draw[line width=0.8pt] (0,0) circle (\R);

  \foreach \p in {0,90}{
    \begin{scope}[rotate=\p]
      \draw[gray!80, densely dashed]
        (0,0) ellipse [x radius=\R, y radius={\R*sin(\tilt)}];
    \end{scope}
  }

  \begin{scope}[rotate=90]
\draw[->, thick] (0,0) -- (0.71,0.71) node[above] {$w_1$};
\draw[->, thick] (0,0) -- (0.71,-0.71) node[above] {$w_2$};
\draw[thick, gray!60] (0,0) -- (1.8,0);
 \draw[thick, gray!60] (0.3,0) arc[start angle=0, end angle=45, radius=0.3];
  \node at (0.52,0.2) {$\theta$};
    \end{scope}
  
  \filldraw[black] (\R,0) circle (2pt) node[right] {$v^\perp$};
  \filldraw[black] (0,\R) circle (2pt) node[above] {$v$};

\end{tikzpicture}
\caption{Hyperk\"ahler rotation from holomorphic curves to special Lagrangians.} \label{fig:hol2slag}
\end{figure}


\section{Gibbons-Hawking ansatz geometry}
We set conventions for the Gibbons-Hawking ansatz \cite{GH78}. Our presentation broadly follows \cite{LotayOli} with opposite sign conventions to match \cite{FoscoloK3}.

\subsection{Conventions and definitions}
\subsubsection{Local coordinates}
Let $(x_1,x_2,x_3)$ denote coordinates on an open set $U \subset \mathbb{R}^3$. In practice, $U$ will often be taken as the complement of a set of points. Let $X$ be the total space of a $U(1)$-bundle with projection $\pi: X \rightarrow U$. Our conventions for the local angle coordinate is $e^{i \psi}$ which transforms as
\[
e^{i \psi} \mapsto e^{i \tau} e^{i \psi}, \quad \tau = \tau(x_1,x_2,x_3)
\]
where $\tau$ is the transition function of the change of coordinates. In other words, the bundle projection map in these local coordinates is
\[
  (x_1,x_2,x_3,e^{i \psi}) \overset{\pi}{\mapsto} (x_1,x_2,x_3),
\]
and
\[
\xi \overset{\rm loc}{=} {\partial \over \partial \psi}
\]
is a global vector field on $X$ generating the $U(1)$ action.

Let $\theta \in \Omega^1(X,\mathbb{R})$ be a connection on $X$ with $\theta(\xi)=1$ taking the local form
\[
\theta \overset{\rm loc}{=}  d \psi - i \pi^* \mathcal{A}
\]
where $\mathcal{A}$ is the local connection 1-form on the base $U_{(x_1,x_2,x_3)}$ valued in $i \mathbb{R}$ which transforms as
\[
\mathcal{A} \mapsto \mathcal{A} - i d \tau
\]
under change of coordinates. The curvature of the connection $F \in \Omega^2(U)$ is defined by $F = d \mathcal{A}$, or
\[
d \theta = - i F.
\]

\subsubsection{Metric ansatz}
Suppose now that
\begin{equation} \label{GH-harm}
d \theta = \star_{\mathbb{R}^3} d h,
\end{equation}
where $h: U \rightarrow (0,\infty)$ is a positive harmonic function. Note that if $h$ satisfies \eqref{GH-harm}, then it must be harmonic. The Gibbons-Hawking ansatz is given by the triple
\[
\underline{\boldsymbol{\omega}}^{\rm gh} = (\omega_1^{\rm gh},\omega_2^{\rm gh},\omega_3^{\rm gh})
\]
where we define
\begin{align}
  &\ \omega_1^{\rm gh} = dx_1 \wedge \theta + h dx_2 \wedge dx_3, \label{GH-ansatz}\\
                    &\ \omega_2^{\rm gh} = dx_2 \wedge \theta + h dx_3 \wedge dx_1, \nonumber\\
  &\ \omega_3^{\rm gh} = dx_3 \wedge \theta + h dx_1 \wedge dx_2. \nonumber
\end{align}
Direct calculation using \eqref{GH-harm} gives
\[
d \omega_a^{\rm gh} = 0.
\]
We can also directly compute
\[
{1 \over 2} \omega_a^{\rm gh} \wedge \omega_b^{\rm gh} = \delta_{ab} \, d {\rm vol}_{g^{\rm gh}},
\]
where $g$ is the metric
\[
g^{\rm gh} = h \pi^* g_{\mathbb{R}^3} + h^{-1} \theta \otimes \theta.
\]
The metric can also be derived from \eqref{hk-metric}, and so $(X,\underline{\boldsymbol{\omega}})$ forms a hyperk\"ahler structure with associated metric $g^{\rm gh}$.

\subsubsection{Special submanifolds}
Consider a straight line segment $\gamma$ in $U \subseteq \mathbb{R}^3$. Let $L = \pi^{-1}(\gamma)$ be the pre-image of this line in the total space of the $U(1)$-bundle $X \overset{\pi}{\rightarrow} U$. It was noticed in \cite{LotayOli} that $L \subseteq X$ can be viewed as either a holomorphic submanifold or a special Lagrangian submanifold depending on the choice of Calabi-Yau structure. Indeed, let $v$ be a unit vector specifying the direction of the line $\gamma$. Then
\[
\omega_v^{\rm gh}|_L = d {\rm vol}|_L
\]
so that $L$ is a holomorphic curve on $(X,J_v)$. To be concrete, say $v = e_1$. Then for any $p \in L$, we have
\[
T_p L = {\rm span} \, \bigg\{ {\partial \over \partial x_1}, {\partial \over \partial \psi} \bigg\}.
\]
It follows that
\[
\omega_1^{\rm gh}|_L = dx_1 \wedge d \psi
\]
and on the other hand
\begin{equation} \label{cigar-metric}
  g^{\rm gh}|_L = h dx_1^2 + h^{-1} (d \psi - i \mathcal{A}_1 dx_1)^2,
\end{equation}
so that $\omega_1^{\rm gh}|_L = d {\rm vol}|_L$. Thus $L \subseteq (X,J_1)$ is a holomorphic curve. Alternatively, $L$ can also be understood as a special Lagrangian submanifold with respect to Calabi-Yau structures of the form $(X, \omega_{v^\perp}, \Omega_{v^\perp})$ .

\begin{rk}
Any circle-invariant minimal surface in $(X, g^{\rm gh})$ is of this form \cite{LotayOli}. However, there are examples of compact minimal surfaces in this setting that are not circle-invariant (and unstable) \cite{Trinca22}.
\end{rk}

\subsection{Multi-Taub-NUT spaces} \label{sec:multiTAUB}
In this paper's main construction, we will use the following particular instance of the Gibbons-Hawking ansatz. Select points $y_1,\dots y_k$ in $\mathbb{R}^3$ and let $U = B_R \backslash \{y_1,\dots, y_k \}$ with $B_R \subseteq \mathbb{R}^3$ a large ball centered at the origin. Consider harmonic functions of the form
\[
h(x) =m+\sum_{i=1}^k \frac{1}{2 |x-y_i|} + \ell(x),
\]
where $m \in \mathbb{R}_{>0}$ and $\ell(x)= \sum_i a_i x_i$ is a linear function such that $h>0$ over $U$.

\begin{rk}
The multi-Taub-NUT geometry is traditionally defined without the linear function, namely $\ell(x) \equiv 0$. We include it here as this local model will appear as bubbles on the $K3$ surface \cite{FoscoloK3} and including this linear term will help match the local model to the geometry on the global compact manifold.
\end{rk}

To create a Gibbons-Hawking space, we must verify that ${1 \over 2 \pi} [\star dh] \in H^2(U,\mathbb{Z})$, and this holds because integrals over small spheres around the punctures $y_i$ are equal to
\begin{equation} \label{local-charge}
\frac{1}{2 \pi} \int_{\partial B_\epsilon(y_i)} \star dh = -1. 
\end{equation}
The condition of integer coefficients in cohomology guarantees that ${1 \over 2 \pi} [\star dh]$ comes from the curvature of a bundle: there exists a $U(1)$-bundle $N \overset{\pi}{\rightarrow} U$ with connection form $\theta$ so that $d \theta = \star d h$. We can then form the Gibbons-Hawking ansatz \eqref{GH-ansatz} and obtain a hyperk\"ahler structure with metric
\[
g^{\rm tn} = h g_{\mathbb{R}^3} + h^{-1} \theta \otimes \theta.
  \]
Next, we note that the geometry of the total space $N$ of the $U(1)$ bundle extends over the points $y_i$. For each point $y_i$, glue along
\[
\pi^{-1}(B_\epsilon(y_i) \backslash \{ y_i \}) \cong \hat{B}_\epsilon \backslash \{ 0 \} \subseteq \mathbb{R}^4
\]
and let $\hat{N}$ be the smooth manifold obtained by adding the balls $\hat{B}_\epsilon \subseteq \mathbb{R}^4$ with origin included. Then
\[
\pi: \hat{N} \rightarrow B_R
\]
is a smooth map such that the circle fibers collapse upon approach to the points $\{y_i\}$. We now drop the hat notation and simply write $N$ for the extension to $y_i$. Let us look at the metric $g^{\rm tn}$ behavior near the marked points. Near each $y_i$, we convert to spherical coordinates $(r,\varphi_1,\varphi_2)$ on $\mathbb{R}^3$ so that $r$ is radial coordinate to $y_i$. Then the hyperk\"ahler metric appears as
\[
g = {1 \over 2r} (dr \otimes dr + r^2 g_{\mathbb{S}^2}) + 2r \theta \otimes \theta + O(1)
\]
and the change of variables $\rho =  \sqrt{2} r$ reveals the smooth metric
\[
g =  d \rho \otimes d \rho + \rho^2 \bigg( {1 \over 4} g_{\mathbb{S}^2} + \theta^{\rm mono} \otimes \theta^{\rm mono} \bigg) + O(1),
\]
where
\[
\theta^{\rm mono} = d \psi - i \mathcal{A}^{\rm mono}, \quad \mathcal{A}^{\rm mono} = \frac{i}{2}(1+\cos \varphi_1) d \varphi_2.
  \]
In fact, $d \rho^2 + \rho^2 (\frac{1}{4} g_{S^2} + (\theta^{\rm mono})^2)$ is the Euclidean metric on $\mathbb{R}^4$ in polar coordinates since $g_{S^3} = \frac{1}{4} g_{S^2} + (\theta^{\rm mono})^2$ by the Hopf fibration construction. So $(N,g)$ is a smooth Ricci-flat manifold which is the total space of a circle bundle on the complement of finitely many points where the circle fibers collapse.

We will denote the multi-Taub-NUT geometry by $(N, \underline{\boldsymbol{\omega}}^{\rm tn})$. Later on, when we glue-in this local model to appropriate scale, we will use the scaling map
\begin{equation} \label{scaling}
S_\epsilon (x,e^{i \psi}) = (\epsilon x, e^{i \psi})
\end{equation}
which sends $x \mapsto \epsilon x$ on the base $\mathbb{R}^3$ and hence satisfies $S_\epsilon^* \rho = \epsilon \rho$ where 
\begin{equation} \label{rho-TN}
\rho: N \rightarrow [0,\infty), \quad \rho(x,e^{i \psi}) = |x|
\end{equation}
 is the distance function to the origin in $\mathbb{R}^3$.

 \subsubsection{Circle invariant holomorphic submanifolds}
Next, we consider circle-invariant holomorphic submanifolds in the multi-Taub-NUT geometry $(N, \underline{\boldsymbol{\omega}}^{\rm tn})$; see also \cite{LotayOli} for further analysis. It was observed in \cite{Trinca22} that any compact holomorphic curve in the multi-Taub-NUT space projects into the union of lines connecting the singular points of the harmonic function $h$, which means that any compact holomorphic curve is circle-invariant. Let us describe these holomorphic curves.

Consider a straight line $\gamma$ in $\mathbb{R}^3$ joining two distinct points $y_i$, and let $L = \pi^{-1}(\gamma)$ be the pre-image of this line in the total space of the circle bundle. Since the circles collapse at the endpoints $y_i$, we have that $L \subseteq N$ is a submanifold which is topologically a 2-sphere $L \cong S^2$; see Figure \ref{fig:sphere}. Its self-intersection is
\[
L \cdot L = -2.
\]
Intuitively, this $-2$ is the sum of the local charges \eqref{local-charge} at each endpoint. Up to a hyperk\"ahler rotation, this $L$ can either be seen as a holomorphic $(-2)$-curve or a special Lagrangian 2-sphere. 

\begin{figure}
\begin{tikzpicture}[scale=0.55, line cap=round, line join=round]
  \def\L{6.2}
  \def\H{1.25}
  \def\a{0.16} 

  \pgfmathdeclarefunction{profileA}{1}{%
    \pgfmathparse{\H*sqrt(max(0,1-(#1/\L)^2))}%
  }

  \fill[gray!8]
    plot[domain=-\L:\L, samples=200] ({\x},{profileA(\x)})
    -- plot[domain=\L:-\L, samples=200] ({\x},{-profileA(\x)})
    -- cycle;
  
  \draw[thick] plot[domain=-\L:\L, samples=200] ({\x},{profileA(\x)});
  \draw[thick] plot[domain=-\L:\L, samples=200] ({\x},{-profileA(\x)});

  \draw[thick] (-\L,0) -- (\L,0);

  \foreach \x in {-5.75,-4.85,-3.7,0,3.7,4.85,5.75} {
    \pgfmathsetmacro{\r}{profileA(\x)}
    \pgfmathsetmacro{\rx}{\a*\r}

    \draw[thick,dashed,dash pattern=on 4pt off 4pt]
      (\x,\r) arc[start angle=90,end angle=270,
      x radius=\rx, y radius=\r];

    \draw[thick]
      (\x,\r) arc[start angle=90,end angle=-90,
      x radius=\rx, y radius=\r];
  }

  \fill (-\L,0) circle (3.8pt);
  \fill (\L,0) circle (3.8pt);

  \node[left] at (-\L-0.18,0) {\(y_1\)};
  \node[right] at (\L+0.18,0) {\(y_2\)};
\end{tikzpicture}

\caption{A sphere connecting two points where the circle fibers collapse in multi-Taub-NUT.} \label{fig:sphere}
\end{figure}

\subsubsection{Cigar} \label{sec:cigar}
In our construction we will encounter a cigar submanifold. Let $y_a \in \mathbb{R}^3$ be $k$ distinct points. We rotate and translate coordinates in $\mathbb{R}^3$ to pin down
\[
y_1 = (1,0,0).
\]
Let
\[
h^{\rm tn} = 1 + \sum_{a=1}^k \frac{1}{2 |x-y_a|}
\]
be a harmonic function defining a multi-Taub-NUT space $N \overset{\pi}{\rightarrow} U$ where $U = \mathbb{R}^3 \backslash \cup_{a=1}^k \{y_a \}$ and the metric is $g^{\rm tn} = h g_{\mathbb{R}^3} + h^{-1} \theta^{\rm tn} \otimes \theta^{\rm tn}$, with
\[
\theta^{\rm tn} = d \psi - i \pi^* \mathcal{A}^{\rm tn}.
\]
The connection 1-form $\mathcal{A}^{\rm tn}$ has an explicit local expression. Near a charge $y_a$, we set spherical coordinates $(r, \varphi_1, \varphi_2)$ in $\mathbb{R}^3$ such that the radius $r$ is based at $y_a$. Then for
\[
h^{\rm mono} = \frac{1}{2r}, \quad \star_{\mathbb{R}^3} d h^{\rm mono} = - \frac{\sin \varphi_1}{2} \, d \varphi_1 \wedge d \varphi_2,
\]
the local connection 1-form over, say the south hemisphere, is given by
\begin{equation} \label{A-single}
\mathcal{A}^{\rm mono} \overset{\rm loc}{=}  \frac{i}{2} (1 + \cos \varphi_1) d \varphi_2,
\end{equation}
solves $d \theta = \star d h$, and the $U(1)$ transition function $e^{i \tau}$ for this one charge is $e^{i \tau}=e^{i \varphi_2}$.

The full $\mathcal{A}^{\rm tn}$ solving $d \theta^{\rm tn} = \star d h^{\rm tn}$ is a sum of these local models with one contribution from each $y_a$. Over $\{|x| \gg 1\}$ with the south hemisphere trivialization for each charge, we have
\[
\mathcal{A}^{\rm tn} = \sum_{a=1}^k \mathcal{A}^{{\rm mono}}_{y_a}.
\]
The estimate that we will need from this construction is that
\begin{equation} \label{connection-A-decay}
| \mathcal{A}^{\rm tn}|_{g_E} \leq C |x|^{-1}, \quad |x| \gg 1,
\end{equation}
over both $U(1)$-bundle trivializations on $\{ |x| \gg 1 \}$.

 Consider the non-compact surface  
\[
L_{\rm cig} \subseteq N, \quad L_{\rm cig} = \pi^{-1}( \{ (x_1,0,0) : 1 \leq x_1 < \infty \})
\]
which is the pre-image of a straight line emanating from $y_1$ to infinity; see Figure \ref{fig:cigar}. We assume that the remaining points $\{ y_a \}_{a=2}^k$ do not intersect this straight line.

We write $g_{\rm cig}$ for the cigar metric, which is the geometry on $L_{\rm cig}$ induced from $(N,g^{\rm tn})$. The cigar metric \eqref{cigar-metric} in coordinates is
\[
g_{\rm cig} = h \, dx \otimes dx + h^{-1} (d \psi - i \mathcal{A}_1(x) dx )^2
  \]
 where the 1-variable function
  \[
h: (1, \infty) \rightarrow (0,\infty)
  \]
  is given by
  \begin{equation} \label{cigar-h}
h(x) = 1 + \frac{1}{2} \sum_{a=1}^k \frac{1}{( |x-y_a^1|^2 + |y_a^2|^2+ |y_a^3|^2)^{1/2}}.
  \end{equation}
  The submanifold $(L_{\rm cig}, g_{\rm cig})$ is asymptotically cylindrical in the following sense. There exists a compact set $K \subseteq L_{\rm cig}$ such that
  \[
L_{\rm cig} \backslash K \cong  (R, \infty) \times S^1
  \]
  and
  \begin{equation} \label{cig-cyl-est}
|g_{\rm cig} - g_{\rm e}|_{g_{\rm e}} + |x| |\partial g_{\rm cig}|_{g_{\rm e}} \leq C |x|^{-1}, \quad {\rm on } \ L_{\rm cig} \backslash K,
\end{equation}
where $g_{\rm e} = dx^2 + d \psi^2$ on $(R,\infty) \times S^1$. We now state a prove a lemma used later on in the gluing construction of \S \ref{sec:invertL}. We remark that a related result is obtained in \cite{FoscoloTrinca}, where it is proved that bounded holomorphic sections of the normal bundle of the cigar in a single-charge Taub-NUT must be zero; the difference here is that we consider cigars in multi-Taub-NUT and assume the stronger condition of decay rather than boundedness.

\begin{figure}
\begin{tikzpicture}[scale=1.1]

  \def\R{0.70}        
  \def\xTip{1.0}      
  \def\xJoin{5.3}     
  \def\xEnd{6.6}      

  \fill[gray!8]
    (\xTip,0)
      .. controls (1.25,0.55) and (2.2,\R) .. (3.6,\R)
      .. controls (4.6,\R) and (\xJoin-0.2,\R) .. (\xJoin,\R)
      -- (\xEnd,\R)
      .. controls (\xEnd+0.18,\R) and (\xEnd+0.18,-\R) .. (\xEnd,-\R)
      -- (\xJoin,-\R)
      .. controls (\xJoin-0.2,-\R) and (4.6,-\R) .. (3.6,-\R)
      .. controls (2.2,-\R) and (1.25,-0.55) .. (\xTip,0)
      -- cycle;

  \begin{scope}
    \clip
      (\xTip,0)
        .. controls (1.25,0.55) and (2.2,\R) .. (3.6,\R)
        .. controls (4.6,\R) and (\xJoin-0.2,\R) .. (\xJoin,\R)
        -- (\xEnd,\R)
        .. controls (\xEnd+0.18,\R) and (\xEnd+0.18,-\R) .. (\xEnd,-\R)
        -- (\xJoin,-\R)
        .. controls (\xJoin-0.2,-\R) and (4.6,-\R) .. (3.6,-\R)
        .. controls (2.2,-\R) and (1.25,-0.55) .. (\xTip,0)
        -- cycle;

    \foreach \x/\yr in {1.6/0.62, 3.6/\R, 6.0/\R} {
      \draw
        (\x,-\yr) arc[start angle=-90, end angle=90,
                     x radius=0.20, y radius=\yr];
      \draw[dashed]
        (\x,\yr) arc[start angle=90, end angle=270,
                     x radius=0.20, y radius=\yr];
    }
  \end{scope}

  \draw[thick]
    (\xTip,0)
      .. controls (1.25,0.55) and (2.2,\R) .. (3.6,\R)
      .. controls (4.6,\R) and (\xJoin-0.2,\R) .. (\xJoin,\R)
      -- (\xEnd,\R)
      .. controls (\xEnd+0.18,\R) and (\xEnd+0.18,-\R) .. (\xEnd,-\R);

  \draw[thick]
    (\xTip,0)
      .. controls (1.25,-0.55) and (2.2,-\R) .. (3.6,-\R)
      .. controls (4.6,-\R) and (\xJoin-0.2,-\R) .. (\xJoin,-\R)
      -- (\xEnd,-\R)
      .. controls (\xEnd+0.18,-\R) and (\xEnd+0.18,\R) .. (\xEnd,\R);

  \draw[->,thick] (\xEnd,0) -- (7.2,0) node[right] {$\infty$};

  \foreach \x in {-1,0,1} {
    \fill (\x,0) circle (2pt);
  }

\end{tikzpicture}

\caption{The cigar $L_{\rm cig}$ as it appears in the multi-Taub-NUT space with three points.} \label{fig:cigar}
\end{figure}

  \begin{prop} \label{cig-no-slow-forms}
   Let $\iota>0$. Let $a \in \Omega^1(L_{\rm cig}, g_{\rm cig})$ satisfy
    \[
      (d+d^\dagger) a = 0
    \]
    and
    \[
|a| + |x| |\nabla a| \leq C |x|^{-\iota}, \quad {\rm on} \ L_{\rm cig} \backslash K.
    \]
Then $a \equiv 0$.    
\end{prop}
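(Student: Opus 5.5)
The plan is to exploit the fact that $L_{\rm cig}$ is a Riemann surface conformal to $\mathbb{C}$. Once that is established, a decaying harmonic $1$-form corresponds to an entire holomorphic function that must vanish by Liouville's theorem.

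\textbf{Step 1: reduce to a closed, co-closed form.} First I split the equation by degree. Since $da\in\Omega^2$ and $d^\dagger a\in\Omega^0$ lie in different bundles, $(d+d^\dagger)a=0$ is equivalent to $da=0$ and $d^\dagger a=0$. Equivalently, $a$ and $\star a$ are both closed, where $\star$ is the Hodge star of $g_{\rm cig}$. On a surface, $\star$ on $1$-forms depends only on the conformal class. Hence the complex $1$-form $\phi = a - i\star a$ (the sign is fixed by the orientation convention) is a holomorphic $(1,0)$-form for the complex structure induced by the conformal class of $g_{\rm cig}$.

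\textbf{Step 2: uniformize the cigar explicitly.} In the metric
\[
g_{\rm cig}= h\,dx^2 + h^{-1}(d\psi - i\mathcal{A}_1(x)dx)^2,
\]
the coefficient $\mathcal{A}_1$ depends only on $x$. I therefore change the fibre coordinate to $\psi' = \psi - i\int \mathcal{A}_1\,dx$; this is a gauge change along the line and preserves the $S^1$-action. This gives $g_{\rm cig} = h\,dx^2 + h^{-1}d\psi'^2 = h^{-1}(h^2dx^2 + d\psi'^2)$. Next set $t(x)=\int_{x_0}^x h(s)\,ds$. Near the tip $x=1$ we have $h\sim \frac{1}{2(x-1)}$, so $t\to-\infty$; as $x\to\infty$ we have $h\to 1$, so $t = x + O(\log x)\to+\infty$. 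Thus $L_{\rm cig}\setminus\{\text{tip}\}$ is conformal to the flat cylinder $\mathbb{R}_t\times S^1_{\psi'}$. The map $w = e^{c(t+i\psi')}$, where $c$ is chosen to make $\psi'$ $2\pi$-periodic, identifies $L_{\rm cig}$ conformally with $\mathbb{C}$, and the tip goes to $w=0$. This is consistent with the local model $\rho=\sqrt2 r$ of \S\ref{sec:multiTAUB}, where the tip is a smooth point at which the cigar is a smooth holomorphic disc. Holomorphicity of $\phi$ is a local statement that holds on all of $L_{\rm cig}$, so $\phi = F(w)\,dw$ with $F$ entire.

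\textbf{Step 3: Liouville.} On the end, $h = 1+O(|x|^{-1})$ by \eqref{cig-cyl-est}. Hence $g_{\rm cig}$ is uniformly equivalent to $dt^2+d\psi'^2$, and the hypothesis gives $|\phi|_{dt^2+d\psi'^2}\le C|a|_{g_{\rm cig}} \le C|x|^{-\iota}\to 0$. In the flat cylinder metric, $|dw| = c|w|$, so $|\phi| = c|F(w)||w|$. Therefore $|F(w)| = o(|w|^{-1})$ as $|w|\to\infty$. Then $wF(w)$ is entire, tends to $0$, and vanishes at $w=0$, so it is identically zero by Liouville's theorem. Hence $F\equiv 0$, so $\phi\equiv0$ and $a\equiv 0$.

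\textbf{Expected obstacle.} The delicate point is Step 2. One must check that the gauge change $\psi\mapsto\psi'$ is legitimate, since $\mathcal{A}_1$ is bounded and only $x$-dependent along the ray. One must also check that the conformal compactification at the tip genuinely yields $\mathbb{C}$ with its standard structure, rather than a punctured plane with a possible pole of $F$ at $0$. This uses that $L_{\rm cig}$ is a smooth embedded surface through $y_1$, so $a$ is smooth there and $\phi$ is bounded near $w=0$. Only the decay of $|a|$ is used; the derivative bound is not needed.
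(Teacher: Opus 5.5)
Your argument is correct, and it takes a genuinely different route from the paper's.

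\textbf{The paper's route.} The paper works with a potential. It uses $H^1(L_{\rm cig})=0$ to write $a=du$, where $u$ is harmonic with sublinear growth $|u|\le C|x|^{1-\iota}$. It then shows $u$ is bounded by comparing $\Delta_{\rm cig}$ with the flat cylindrical Laplacian via \eqref{cig-cyl-est} and solving mode by mode in Fourier series on the end. This is where the derivative bound $|x||\nabla a|$ enters: it makes $(\Delta_{\rm e}-\Delta_{\rm cig})u$ decay like $|x|^{-2-\iota}$. Finally it shows $\nabla u\equiv 0$ using a Caccioppoli inequality and linear volume growth.

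\textbf{Your route.} You use instead that in dimension two the equation $(d+d^\dagger)a=0$ depends only on the conformal class, and that the $S^1$-invariant cigar can be uniformized explicitly. You gauge-fix the connection along the ray and pass to the isothermal coordinate $t=\int h\,dx$, which makes $L_{\rm cig}\setminus\{\text{tip}\}$ a flat cylinder. The smoothness of $L_{\rm cig}$ at $y_1$, via a removable-singularity argument for the bounded coordinate $w$, then identifies $L_{\rm cig}$ with $\mathbb{C}$. The hypothesis becomes a growth condition on the entire function $wF(w)$.

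\textbf{Comparison.} Your argument is shorter and avoids perturbing the Laplacian altogether. It also proves more. Since $wF(w)$ is entire and vanishes at $w=0$, boundedness of $|a|$ alone already forces $a\equiv 0$; you need neither decay nor the derivative bound. This is the analogue of the statement from \cite{FoscoloTrinca} that the paper cites. The paper's argument, by contrast, uses only the topological fact $H^1=0$ and the asymptotically cylindrical estimates. It does not rely on the special conformal structure in dimension two or on the explicit $S^1$-symmetric form of $g_{\rm cig}$. That makes it consistent with the weighted analysis used elsewhere in the paper and easier to adapt where those special features are absent.

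\textbf{Minor point.} With orientation $dt\wedge d\psi'$, the $(1,0)$-form is $a+i\star a$, paired with $w=e^{t+i\psi'}$. You already noted that the sign depends on the orientation convention, so this changes nothing in the argument.
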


\begin{proof}
  We use that $H^1(L_{\rm cig})=0$, and so $a = d u$ for some function $u$ solving
  \[
\Delta_{\rm cig} u = 0
  \]
  and
  \[
|u| + |x| |\nabla u| + |x|^2 |\nabla^2 u | \leq C |x|^{1-\iota}, \quad {\rm on} \ L_{\rm cig} \backslash K.
  \]
  We will then conclude that $a \equiv 0$ since every harmonic function on $(L_{\rm cig}, g_{\rm cig})$ with sublinear growth must be constant. This is standard, but we give the full details for completeness. The main step is to show
    \begin{equation} \label{cig-u-bdd}
|u| \leq C, \quad {\rm on} \ L_{\rm cig} \backslash K.
\end{equation}
We complete the proof using \eqref{cig-u-bdd} and prove it afterwards.  Let $\chi_R$ be a cutoff function such that
\[
\chi_R(x)=
\begin{cases}
1, & |x|<R,\\
0, & |x|>2R,
\end{cases}
\qquad
|\nabla \chi_R|_{\rm cig} \leq C R^{-1}.
\]
Integrating by parts and using $\Delta_{\rm cig} u = 0$ gives
\[
\int_{L_{\rm cig}} \chi_R^2 |\nabla u|^2 \, d {\rm vol}_{\rm cig} = - 2 \int_{L_{\rm cig}} u \chi_R \langle \nabla \chi_R, \nabla u \rangle \, d {\rm vol}_{\rm cig},
\]
from which follows the Caccioppoli inequality
\[
\int_{L_{\rm cig}} \chi_R^2 |\nabla u|^2 \, d {\rm vol}_{\rm cig} \leq C \int_{ \{ |x| < 2 R \}} u^2 |\nabla \chi_R|^2 \, d {\rm vol}_{\rm cig}.
\]
Since $|u| \leq C$ and ${\rm Vol}\{ |x| < R \} \leq C R$, we obtain
\[
\int_{L_{\rm cig}} \chi_R^2 |\nabla u|^2 \, d {\rm vol}_{\rm cig} \leq C R^{-1}.
\]
Sending $R \rightarrow \infty$ implies $\nabla u \equiv 0$. Therefore $u$ is a constant function and $a \equiv 0$.

 We now prove \eqref{cig-u-bdd}. First, we write
  \[
\Delta_{\rm e} u = (\Delta_{\rm e} - \Delta_{\rm cig}) u
\]
and use \eqref{cig-cyl-est} to estimate
\[
f:= \Delta_{\rm e} u, \quad |f| \leq C |x|^{-2 - \iota},
\]
where $f$ is defined on $(R,\infty) \times S^1$. Extend $f$ arbitrarily to $\tilde{f}$ on $\mathbb{R} \times S^1$ in such a way that
\[
{\rm supp} \, \tilde{f} \subseteq [0,\infty) \times S^1.
\]
We can expand $\tilde{f}$ by Fourier series.
\[
\tilde{f}(x,e^{i \psi}) = \sum_{n=-\infty}^\infty a_n(x) e^{i n \psi},
\]
and note
\[
|a_n(x)| \leq C |x|^{-2 -\iota}, \quad {\rm supp} \, a_n \subseteq [0,\infty).
\]
Substituting the ansatz
\[
u^{\rm decay} = \sum_{n=-\infty}^\infty b_n(x) e^{i n \psi}
\]
into the equation $\Delta_{\rm e} u^{\rm decay} = \tilde{f}$ gives the ODEs
\[
b_n'' - n^2 b_n = a_n
\]
which are readily solved. Indeed, by integrating $a_0(x)$ twice we can produce a choice of $b_0(x)$ such that
\[
|b_0(x)| \leq C |x|^{-\iota},
\]
and $b_n(x)$ for $n \geq 1$ have the well-known harmonic oscillator ODE solution, which is commonly derived using the Fourier transform in an engineering course, given by
\[
b_n(x) = - \frac{1}{2n} \int_{-\infty}^\infty a_n(x-s) e^{-n|s|} \, ds, \quad |b_n| \leq C n^{-2} |x|^{-2 - \iota}.
\]
Thus we have constructed a solution to
\[
\Delta_{\rm e} u^{\rm decay} = \tilde{f}, \quad |u^{\rm decay}| \leq C |x|^{-\iota}, \quad {\rm on} \ \mathbb{R} \times S^1.
\]
Returning to our given $u$, we have
\[
\Delta_{\rm e}( u - u^{\rm decay}) = 0, \quad |u- u^{\rm decay}| \leq C|x|^{1-\iota}, \quad {\rm on} \ (R,\infty) \times S^1.
\]
Writing
\[
  u- u^{\rm decay} = \sum_{n=-\infty}^\infty b_n(x) e^{i n \psi},
  \]
  we see that the Laplace equation implies $b_0 = c_1 + c_2 x$ and hence $c_2 = 0$, and also $b_n = c_1 e^{nx} + c_2 e^{-nx}$ and hence $c_1=0$. Therefore
  \[
|u- u^{\rm decay} | \leq C, \quad {\rm on} \ (R,\infty) \times S^1,
  \]
  from which estimate \eqref{cig-u-bdd} follows.

  \end{proof}

\section{Collapsing K3 Surfaces} \label{sec:foscolosummarized}
In this section, we summarize Foscolo's gluing construction of K3 surfaces \cite{FoscoloK3}. This produces a sequence of K3 surfaces which collapse to $\mathbb{T}^3/\mathbb{Z}_2$.

\subsection{Metrics on the punctured torus}
Let $\mathbb{T} = \mathbb{R}^3 / \Lambda$ be a 3-torus with flat metric $g_{\mathbb{T}}$, where $\Lambda$ is a lattice. Let $\{ q_j \}_{j=1}^8$ be the fixed points of the involution $\tau: \mathbb{T} \rightarrow \mathbb{T}$ given by $x \mapsto -x$. Let $\{ p_i \}_{i=1}^{n}$ be $n$ distinct points on $\mathbb{T}$ not equal to any $q_j$. 

We will work on the punctured torus
\[
\mathbb{T}^* = \mathbb{T} \backslash \{ q_1, \dots, q_8, p_1, -p_1, \dots, p_n, -p_n \}.
\]
For each point in $x \in \{ q_1, \dots, q_8, p_1, -p_1, \dots, p_n, -p_n \}$, fix a small neighborhood of the chosen point $x$ and let $r_x$ be the local distance function to $x$. We arbitrarily extend these local functions $r_x$ and denote by
\[
r: \mathbb{T} \rightarrow [0,\infty)
\]
an arbitrary extension to the entire compact $\mathbb{T}$ such that
\[
  \{ r = 0 \} = \bigcup \{q_j \} \cup \bigcup \{ p_i \} \cup \bigcup \{ - p_i \}
\]
and $r$ agrees with the local distance function on small neighborhoods of the punctures. 

\subsubsection{Harmonic function with prescribed singularities}
Attach to each $\{ q_j \}_{j=1}^8$ and $\{ p_i \}_{i=1}^{n}$ the following integers
\[
\{ m_j \}_{j=1}^8, \quad \{ k_i \}_{i=1}^n,
\]
where $m_j \geq 0$ and $k_i > 0$. The global gluing construction relies on the following observation:

\begin{lem} \cite{FoscoloK3} \label{harmonic-presc-sing}
Suppose the following balancing condition holds:
\begin{equation} \label{balancing-cond}
\sum_{j=1}^8 m_j + \sum_{i=1}^n k_i = 16.
\end{equation}
Then there exists a harmonic function $h$ on $(\mathbb{T}^*,g_{\mathbb{T}})$ with prescribed singularities
\begin{align*}
  h &\overset{\rm loc}{=} \frac{2 m_j - 4}{2 r} + O(1), \quad {\rm near} \ q_j \\
  h &\overset{\rm loc}{=} \frac{k_i}{2 r} + O(1), \quad {\rm near} \ p_i \ {\rm or} \ -p_i.
\end{align*}
\end{lem}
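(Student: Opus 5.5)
The plan is to solve a distributional Poisson equation on the flat torus $(\mathbb{T},g_{\mathbb{T}})$, taking Green's functions as the building blocks. Let $G_y$ denote the Green's function of the flat torus with pole at $y$. It is normalized by
\[
\Delta G_y = \delta_y - \frac{1}{{\rm Vol}(\mathbb{T})},
\]
with the sign convention chosen so that, near $y$,
\[
G_y = \frac{1}{4\pi r_y} + O(1).
\]
It exists and is unique up to an additive constant, because $\mathbb{T}$ is compact and the right-hand side has zero integral. It is smooth away from $y$, and its expansion near $y$ follows from comparing with the Euclidean fundamental solution $1/(4\pi|x|)$: the difference solves a Poisson equation with bounded right-hand side, so it is smooth.

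Next I would take the candidate
\[
h = c + 2\pi\Big( \sum_{j=1}^8 (2m_j-4)\, G_{q_j} + \sum_{i=1}^n k_i\, (G_{p_i} + G_{-p_i}) \Big),
\]
where $c$ is a constant. The factor $2\pi$ turns $\frac{1}{4\pi r}$ into $\frac{1}{2r}$, so the singularities are exactly the ones prescribed in the statement. The delta terms produce the singularities, and on $\mathbb{T}^*$ the only remaining source is the constant term
\[
\Delta h = -\frac{2\pi}{{\rm Vol}(\mathbb{T})} \Big( \sum_j (2m_j-4) + 2\sum_i k_i \Big).
\]
The balancing condition \eqref{balancing-cond} gives
\[
\sum_j (2m_j - 4) + 2 \sum_i k_i = 2\Big(\sum_j m_j + \sum_i k_i\Big) - 32 = 0.
\]
Hence $\Delta h = 0$ on $\mathbb{T}^*$, so $h$ is harmonic there with the required local expansions.

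This cancellation of the total charge is the key step. It is also the only real obstruction: by the divergence theorem, the integral of $\Delta h$ over $\mathbb{T}$ equals the total flux through the punctures, so the balancing condition is necessary as well as sufficient. Two further remarks:

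\begin{itemize}
\item The constant $c$ is free. It can be chosen to make $h$ large, which is the role played by $\epsilon^{-1}$ in Foscolo's construction.
\item Since $G_{-y}(x) = G_y(-x)$ and the poles come in pairs $\{p_i, -p_i\}$ together with the fixed points $q_j$, the function $h$ is automatically $\tau$-invariant. This invariance is needed later to pass to the quotient by $\tau$.
\end{itemize}
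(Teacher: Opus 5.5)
Your proof is correct: the normalization $2\pi\cdot\frac{1}{4\pi r}=\frac{1}{2r}$ gives the prescribed singular parts. On $\mathbb{T}^*$ the only surviving term is the background constant $-\frac{2\pi}{{\rm Vol}(\mathbb{T})}\bigl(\sum_j(2m_j-4)+2\sum_i k_i\bigr)$, and the balancing condition makes it vanish.

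The paper uses the same mechanism in a different package. It glues the local profiles $\frac{2m_j-4}{2r}$ and $\frac{k_i}{2r}$ into one global function $f$, and observes that $\Delta f=\psi$ is smooth on all of $\mathbb{T}$. It then identifies $\int_{\mathbb{T}}\psi$ with the total flux $2\pi\bigl[\sum_j(2m_j-4)+2\sum_i k_i\bigr]=0$, performs a single Poisson solve $\Delta\bar f=\psi$, and sets $h=f-\bar f$. Each torus Green's function is itself built by exactly this one-puncture construction, with a compensating $-1/{\rm Vol}(\mathbb{T})$. So your route moves the obstruction from the condition $\int_{\mathbb{T}}\psi=0$ to the cancellation of the background constants.

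The paper's version is more self-contained: one solve with a smooth zero-mean right-hand side, and no separate existence and asymptotics of Green's functions. Yours gives a canonical formula for $h$ up to the additive constant. It also makes transparent both the necessity of the balancing condition and the $\tau$-invariance needed later for the quotient $P/\tau$, which the paper's proof does not address. For the $\tau$-invariance, note that $G_{-y}(x)=G_y(-x)$ requires normalizing the additive constants, e.g.\ to zero mean; the pair sums $G_{p_i}+G_{-p_i}$ are $\tau$-invariant regardless.
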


\begin{proof}
We give the proof to see the role of the balancing condition. First, take $f \in C^\infty(\mathbb{T}^*)$ to be an arbitrary global function extending the local functions
\begin{align*}
  f &\overset{\rm loc}{=} \frac{2 m_j - 4}{2 r}, \quad {\rm near} \ q_j \\
  f &\overset{\rm loc}{=} \frac{k_i}{2 r}, \quad {\rm near} \ p_i \ {\rm or} \ -p_i.
\end{align*}
Since $\Delta_{\mathbb{R}^3} {1 \over r} \equiv 0$ in a neighborhood of each puncture, we have
\[
\Delta_{g_{\mathbb{T}}} f = \psi, \quad \psi \in C^\infty(\mathbb{T})
\]
where $\psi$ can be viewed as a smooth bounded function on the entire compact torus $\mathbb{T}$ vanishing in a neighborhood of each distinguished point. The balancing condition \eqref{balancing-cond} implies
\begin{align*}
  \int_{\mathbb{T}} \psi \, d {\rm vol}_{\mathbb{T}} &= -\sum_\ell \int_{\partial B_\epsilon(x_\ell)} \nabla f \cdot {\bf n}^{\rm out} \, dS\\
                                                     &= 2 \pi \bigg[ \sum_j (2 m_j - 4) + \sum_i k_i + \sum_i k_i \bigg]\\
  &= 0
\end{align*}
Here $x_\ell$ denotes a $q_j$, $p_i$, or $-p_i$. Since the integral vanishes over the compact torus, we may solve $\Delta_{g_{\mathbb{T}}} \bar{f} = \psi$ for a smooth function $\bar{f} \in C^\infty(\mathbb{T})$. We then let
\[
h = f - \bar{f}
\]
be the desired harmonic function with prescribed singularities.
\end{proof}

We will write
\begin{equation} \label{h-sing}
\begin{aligned} 
  h &\overset{\rm loc}{=} \lambda_i + \frac{k_i}{2 r} + \ell_i+ O(r^2), \quad {\rm near} \ p_i \ {\rm or} \ -p_i,\\
   h &\overset{\rm loc}{=} \lambda_j + \frac{2m_i-4}{2 r} + \ell_j+ O(r^2), \quad {\rm near} \ q_j \nonumber
\end{aligned}
\end{equation}
where $|\ell_i| \leq C r$ has linear growth.

\subsubsection{Torus bulk region}
Let $h$ be the harmonic function from Lemma \ref{balancing-cond}. By (\cite{FoscoloK3} Proposition 4.3), there exists a principal $U(1)$-bundle $P \rightarrow \mathbb{T}^*$ with a connection 1-form $\theta \in \Omega^1(P)$ solving
\[
d \theta = \star_{g_{\mathbb{T}}} d h.
\]
We set
\[
h_\epsilon = 1 + \epsilon h,
\]
so that near $p_i$ we have the expansion
\[
h_\epsilon \overset{\rm loc}{=} 1+ \epsilon \lambda_i + \frac{\epsilon k_i}{2 r} + \epsilon \ell_i+ \epsilon O(r^2).
\]
Let $(x_1,x_2,x_3)$ be angle coordinates on $\mathbb{T}^3$. Foscolo's hyperk\"ahler bulk model $\underline{\boldsymbol{\omega}}^{\rm bulk}_{\epsilon}$ over $P$ is:
\begin{equation} \label{bulk-metric}
\begin{aligned}
  \omega^{\rm bulk}_{\epsilon,a} &= \epsilon dx_a \wedge \theta + h_\epsilon dx_b \wedge dx_c, \quad \epsilon_{abc} = 1 \\
  g^{\rm bulk}_\epsilon &= h_\epsilon \pi^* g_{\mathbb{T}^3} + \epsilon^2 h_\epsilon^{-1} \theta \otimes \theta .
\end{aligned}
\end{equation}
The ansatz defines a hyperk\"ahler triple where $ \{ h_\epsilon > 0 \}$. In the upcoming gluing construction, we will only work on the set $P \cap \{ r > R_0 \epsilon \}$ where $R_0 \gg 1$, and on this set $h_\epsilon > 0$ once $\epsilon$ is taken small enough. The involution $\tau$ on $\mathbb{T}^*$ lifts to the circle bundle $P$ \cite{FoscoloK3}. We take the quotient by this involution and obtain the manifold
\[
M_\epsilon^{*} = P / \tau.
\]
We will need an asymptotic expansion of $\theta = d \psi - i \mathcal{A}$ near $p_i$. Let $\mathcal{A}^{k_i}$ be the explicit solution to
\[
-i d \mathcal{A}^{k_i} =  \star_{g_{\mathbb{T}}} d \bigg[ \frac{k_i}{2 r} + \ell_i \bigg]
\]
given by
\begin{equation} \label{multi-A}
\mathcal{A}^{k_i} =  k_i \mathcal{A}^{\rm mono} + \frac{i}{2} \epsilon_{\alpha \beta \gamma} c^\alpha x_\beta dx_\gamma,
\end{equation}
where $\ell_i = c^\alpha x_\alpha$ and $\mathcal{A}^{\rm mono}$ is defined in \eqref{A-single}, valid on one of the two trivializations of the bundle over $B_{\rho} \backslash \{ p_i \}$. We claim that up to choice of gauge, we have
\begin{equation} \label{bulk-A}
| \mathcal{A} - \mathcal{A}^{k_i}|_{g_{\mathbb{T}}} \leq C  r^2, \quad |\mathcal{A}|_{g_{\mathbb{T}}} \leq C r^{-1},
  \end{equation}
near $p_i$. For this, we start from
\[
d (\mathcal{A}^{k_i} - \mathcal{A}) = \star_{g_{\mathbb{T}}} d \bigg[ O (r^2) \bigg].
\]
In other words,
\[
  d (\mathcal{A}^{k_i} - \mathcal{A}) = F, \quad |F|_{g_{\mathbb{T}}} \leq C r, \quad {\rm on} \ B_{\rho} \backslash \{ p_i \}.
\]
In polar coordinates $(r,\varphi^1, \varphi^2)$ on $\mathbb{R}^3$ centered at $p_i$, we write
\[
F = F_{ri} \, dr \wedge d \varphi^i + F_{12} \, d \varphi^1 \wedge d \varphi^2
\]
and solve $d \alpha = F$ with $|\alpha|_{g_{\mathbb{T}}} \leq C r^2$ by $dF = 0$ and radial integration
\[
\alpha = \bigg[ \int_0^r F_{ri} (s, \cdot) \, ds \bigg] \, d \varphi^i.
\]
Thus
\[
d (\mathcal{A}^{k_i} - \mathcal{A} - \alpha) = 0, \quad {\rm on} \ B_{\rho} \backslash \{ p_i \}.
\]
Since the punctured ball is simply-connected, there exists a function $f$ such that $\mathcal{A}^{k_i} - \mathcal{A} - \alpha = df$. A gauge transformation on the $U(1)$ bundle has the effect of
\[
\mathcal{A} \mapsto \mathcal{A} +df ,
\]
which sets the gauge for estimate \eqref{bulk-A}, where the estimate is
\[
|\mathcal{A} +df - \mathcal{A}^{k_i}|_{g_{\mathbb{T}}} = |\alpha|_{g_{\mathbb{T}}} \leq C r^2.
\]
Introducing the factors of $\epsilon$ by the definition of $g_\epsilon^{\rm bulk}$ gives \eqref{bulk-A}.

\subsection{Glued compact 4-manifold} \label{sec:gluingregion}
The space $M_\epsilon^{*}$ will be desingularized by gluing-in suitable local models to form a compact four-manifold $M_\epsilon$. We remove from $M_\epsilon^{*}$ small balls around the 8 points $q_j$ and the $n$ points $p_i$ to obtain a manifold with boundary with $n+8$ components. We form $M_\epsilon$ as a smooth compact 4-manifold obtained by gluing local models onto these boundary components. The 8 boundary components corresponding to $q_j$ are matched with a $D_{m_j}$ ALF space (for a construction of these, see \cite{AtiyahHitchin, BiquardMinerbe, SS}). This gluing at $q_j$ is explained in \cite{FoscoloK3}, but since we will work away from this region we omit the details. We focus on the $n$ boundary components corresponding to the points $p_i$.

\subsubsection{Bubble at $p_i$}
To each of the $n$ points $p_i \in M_\epsilon^*$ with corresponding weight $k_i$, we will introduce a multi-Taub-NUT space $N_i$ with $k_i$ distinct points following conventions in Section \ref{sec:multiTAUB}. Specifically, we build a multi-Taub-NUT space $N_i \overset{\pi}{\rightarrow} U_{i}$, where
\[
U_{i} = B_{\rho_0 \epsilon^{-1}} \backslash \{ y_1, \dots, y_{k_i} \}
\]
with $y_r \in \mathbb{R}^3$ distinct points chosen to satisfy the vanishing dipole moment condition
\begin{equation} \label{zero-dipole}
\sum_{a=1}^{k_i} y_a = 0.
\end{equation}
This condition will be needed later on in the error estimate \eqref{zero-dipole2}. We take the harmonic function building the circle bundle to be
\begin{equation} \label{N_i-localgeom}
h_{{\rm tn}, i} = 1 + \epsilon \lambda_i + \sum_{a=1}^{k_i} \frac{1}{2|x-y_a|} + \epsilon^2 \ell_i.
\end{equation}
The constants $\lambda_i$ and linear function $\ell_i$ comes from the asymptotics of the harmonic function on the punctured $\mathbb{T}^*$ near $p_i$; see \eqref{h-sing}. We have positivity $h_{{\rm tn}, i}>0$ over $U_i$ for $\epsilon>0$ small enough. To summarize, to each point $p_i$ we associate the local model:
\[
p_i \rightsquigarrow (N_i, h_{{\rm tn},i}, \underline{\boldsymbol{\omega}}^{{\rm tn},i}).
  \]
We will rescale this Taub-NUT local model $N_i$ to match-up with an annulus surrounding $p_i \in M_\epsilon^*$ in the bulk space by using the scaling map $S_\lambda: x \mapsto \lambda x$ given in \eqref{scaling}. Explicitly, near each $p_i$ we identify
\[
A_i = \{R_0 \epsilon < r < \rho_0 \} \subseteq P
\]
on the bulk space circle bundle $P \overset{\pi}{\rightarrow} \mathbb{T}^*$ with
\[
A^{\rm tn}_i = \{R_0 \epsilon <  |x| < \rho_0 \} \subseteq N_{i,\epsilon}
\]
where $N_{i,\epsilon} \overset{\pi}{\rightarrow} U_{i,\epsilon}$ is given by
\[
N_{i, \epsilon} = S_\epsilon(N_i), \quad U_{i,\epsilon} = B_{\rho_0} \backslash \{\epsilon y_1, \dots, \epsilon y_{k_i} \}.
\]
The constants $0<\rho_0<1$, $R_0>1$ are parameters that will be fixed later on. We identify $|x|$ on $N_{i,\epsilon}$ with $r$ on $P$ near $p_i$.

\begin{figure}
\begin{tikzpicture}[scale=0.7]
  \def\r{1.5}        
  \def\R{2.0}        
  \def\X{3}          
  \def\offset{-0.5}  

\fill[gray!8, even odd rule] 
    (-\X,0) circle (\R) (-\X,0) circle (\r);
  \draw[dashed] (-\X,0) circle (\r);
  \draw[dashed] (-\X,0) circle (\R);
  \draw[thick] (-\X,0) circle (0.07);
  \node[above] at (-\X,0) {$(p_i, k_i)$};

  \fill[gray!8, even odd rule] 
    (\X,0) circle (\R) (\X,0) circle (\r);
  \draw[dashed] (\X,0) circle (\r);
  \draw[dashed] (\X,0) circle (\R);

  \foreach \x/\y/\name in {
    \X/0.52/y_1,
    {\X - 0.5}/-0.26/y_2,
    {\X + 0.5}/-0.26/y_{k_i}
  } {
    \draw[thick] (\x,\y) circle (0.07);
    \node[right] at (\x,\y) {$\name$};
  }

  \draw[->, thick] (0.4, \offset) -- (-0.4, \offset);

\end{tikzpicture}

\caption{The singularity $p_i$ with weight $k_i$ is replaced by $k_i$ points of weight 1.} \label{fig:sing-weight}
\end{figure}

We now explain why these two annular regions $A_i$ and $A^{\rm tn}_i$ can be identified by diffeomorphism (see Figure \ref{fig:sing-weight}). Both bases may be identified with $B = \{ R_0 \epsilon < |x| < \rho_0 \} \subseteq \mathbb{R}^3$. The two regions $A_i$ and $A^{\rm tn}_i$ correspond to two circle bundles $L_1 \rightarrow B$ and $L_2 \rightarrow B$. Since $B$ retracts to a 2-sphere, by the classification of line bundles these bundles are isomorphic provided their Chern classes are equal. This amounts to verifying that the integrals
\[
\frac{1}{2 \pi} \int_{ \{ |x| = \rho_0 \}} d \theta
\]
match to the integer $k_i$, which holds true by the definitions of $h$ \eqref{h-sing} and $h^{\rm tn}_i$ \eqref{N_i-localgeom}. The identification between the two gluing regions is thus of the form
\begin{equation} \label{gluing-ann}
\phi: A_i \rightarrow A^{\rm tn}_i, \quad \phi(x, e^{i \psi}) \overset{\rm loc}{=} (x, f(x) e^{i \psi}).
\end{equation}

\subsubsection{Compactification} To summarize, near each $p_i$, we glue-in the local model $N_{i,\epsilon}$ via $M_\epsilon^* \cup_\phi N_{i,\epsilon}$. Near each $q_j$, the construction \cite{FoscoloK3} glues-in a local model $M_{j,\epsilon}$ to $M_\epsilon^*$, where $(M_{j, \epsilon},\epsilon^2 \omega_{M_{j,\epsilon}})$ is a $D_{m_j}$ ALF gravitational instanton. This defines a compact manifold $M_\epsilon$. 

\subsubsection{Weight} We define the weight function
\[
\rho: M_\epsilon \rightarrow [0,\infty)
\]
defined by
\begin{equation} \label{rho-defn}
\rho =
\begin{cases}
\frac{1}{2} R_0 \epsilon & \text{on } \{ |x| \leq \frac{1}{2} R_0 \epsilon \} \cap N_{i,\epsilon} , \\
|x| & \text{on } \{ \frac{1}{2} R_0 \epsilon  <  |x| < \rho_0 \} \cap A, \\
\frac{1}{2} R_0 \epsilon   & \text{on } \{ |x| \leq \frac{1}{2}R_0 \epsilon \} \cap M_{i,\epsilon}, \\
1 & \text{on } \{ |x| \geq 2 \rho_0 \} \cap A
\end{cases}
\end{equation}
and $\rho$ interpolates monotonically in the remaining regions. Here $A$ denotes either $A_i$ or $A_j$, where $A_i \subseteq M_\epsilon^*$ denotes a collar around $p_i$ glued to the local model $N_{i,\epsilon}$, and $A_j \subseteq M_\epsilon^*$ denotes a collar around $q_j$ glued to the local model $M_{i,\epsilon}$.

\subsection{Reference metric} \label{sec:refmetric}
Having described the topological gluing of manifolds from $M_\epsilon^*$ to the compact 4-manifold $M_\epsilon$, we now add geometry. We will use a reference triple $\underline{\boldsymbol{\omega}}_\epsilon$ which interpolates between $\underline{\boldsymbol{\omega}}^{\rm bulk}_\epsilon$ on the bulk torus region and the glued-in Taub-NUT local models $\underline{\boldsymbol{\omega}}^{{\rm tn},i}$ near the singularities $p_i$. The precise definition of the reference metric is given below in \eqref{bg-metric-regions}, but in this introductory passage we simply note that this cutoff function interpolation will be such that
\begin{align} \label{pre-perturbed-defn}
  \underline{\boldsymbol{\omega}}_\epsilon|_{\{ \rho \geq 2 \epsilon^{2/5} \}} &=  \underline{\boldsymbol{\omega}}_{\epsilon}^{\rm bulk} \\
\underline{\boldsymbol{\omega}}_\epsilon|_{\{ \rho_i \leq \epsilon^{2/5} \}} &=  \epsilon^2 S^*_{\epsilon^{-1}} \underline{\boldsymbol{\omega}}^{{\rm tn},i}.
\end{align}
Here $\rho_i$ denotes the function $\rho$ near $p_i$. The behaviour of $\underline{\boldsymbol{\omega}}_\epsilon$ near $q_j$ is analogous and described in \cite{FoscoloK3}, but our analysis will take place near the $p_i$ so we omit this.

\begin{rk}
The gluing collar $\epsilon^{2/5} \leq \rho \leq 2 \epsilon^{2/5}$ is chosen with a power of $\frac{2}{5}$ to control error terms on first derivatives of the metric with are often of order $\epsilon O(\rho^{-2})$; e.g. $|\partial h_\epsilon|$.
  \end{rk}

The Taub-NUT local models $\underline{\boldsymbol{\omega}}^{{\rm tn},i}$ specified earlier by the harmonic function given in \eqref{N_i-localgeom} are rescaled. The scaling is needed to match the local model with the small scale of the gluing region near each $p_i$. The explicit expression for the rescaled Taub-NUT local model is as follows. We will use the notation
\[
\underline{\boldsymbol{\omega}}^{{\rm tn},i}_\epsilon := \epsilon^2 S^*_{\epsilon^{-1}} \underline{\boldsymbol{\omega}}^{{\rm tn},i}.
\]
On
\[
\bigg[ \{ \rho_i \leq \epsilon^{2/5} \} \cap M_\epsilon \bigg] \cong \bigg[ \{ \rho \leq \epsilon^{2/5} \} \cap \{ N_\epsilon \overset{\pi}{\rightarrow} U_{\epsilon} \} \bigg]
\]
the local model is, where we drop for the moment the subscript $i$ labelling the local zone nearby $p_i$, given by
\begin{equation} \label{explicit-local-tn}
  \begin{aligned}
    (\omega^{{\rm tn}}_\epsilon)_\alpha &= \epsilon dx_\alpha \wedge \theta^{\rm tn}_\epsilon + h^{\rm tn}_\epsilon \, dx_\beta \wedge d x_\gamma, \quad \epsilon_{\alpha \beta \gamma}=1,\\
    g^{\rm tn}_\epsilon &= h^{\rm tn}_\epsilon g_{\mathbb{R}^3} + \epsilon^2 (h^{\rm tn}_\epsilon)^{-1}  \theta^{\rm tn}_\epsilon \otimes \theta^{\rm tn}_\epsilon,
    \end{aligned}
\end{equation}
where 
\begin{equation} \label{explicit-local-htn}
h^{\rm tn}_\epsilon(x) = 1 + \epsilon \lambda + \epsilon \sum_{a=1}^k \frac{1}{2 |x-\epsilon y_a|} + \epsilon \ell(x),
\end{equation}
and
\[
\theta^{\rm tn}_\epsilon = d \psi - i S_{\epsilon^{-1}}^* \mathcal{A}^{\rm tn},
\]
solves $d \theta^{\rm tn}_\epsilon = \epsilon^{-1} \star_{\mathbb{R}^3} d h^{\rm tn}_\epsilon$, in which
\[
\mathcal{A}^{\rm tn} = \sum_{a=1}^k \mathcal{A}^{{\rm mono}}_{y_a} + \frac{i \epsilon^2}{2} \epsilon_{\alpha \beta \gamma} c^\alpha x_\beta dx_\gamma
\]
and $\ell(x) = c^\alpha x_\alpha$ and $\mathcal{A}^{\rm mono}$ is given in \eqref{A-single}.



The reference triple of closed 2-forms $\underline{\boldsymbol{\omega}}_\epsilon$ defines a reference metric $g_\epsilon$ by \eqref{hk-metric}. We denote this structure on the glued compact 4-manifold as:
\[
(M_\epsilon, \underline{\boldsymbol{\omega}}_\epsilon, g_\epsilon).
\]
We refer to $\underline{\boldsymbol{\omega}}_\epsilon$ as the reference triple and $g_\epsilon$ the reference (or pre-perturbed) metric. This is not yet a hyperk\"ahler structure, as there will be errors from a cutoff function on the interpolation region. Foscolo \cite{FoscoloK3} later perturbs $\underline{\boldsymbol{\omega}}_\epsilon$ by implicit function theorem to a genuine hyperk\"ahler structure.


\subsubsection{Construction of reference metric}
We now give the details of the definition of $\underline{\boldsymbol{\omega}}_\epsilon$. We work on the glued collar $A_i$ near a given point $p_i$ which we identify with $A_i^{\rm tn}= \{ R_0 \epsilon < \rho < \rho_0 \}$ on $N_i \overset{\pi}{\rightarrow} U_{i, \epsilon}$. We drop the $i$ subscript in this subsection. The main technical input in the construction of the reference metric \cite{FoscoloK3} is that there exists $\underline{\boldsymbol{u}}_\epsilon \in \Omega^1(A)^{\oplus 3}$ such that
\begin{equation} \label{glue-collar}
\begin{aligned} 
  \underline{\boldsymbol{\omega}}_\epsilon^{\rm bulk}|_A &=  \underline{\boldsymbol{\omega}}^{\rm tn}_\epsilon + d \underline{\boldsymbol{u}}_\epsilon, \\
  \sup_{\{ \epsilon^{2/5} \leq \rho \leq 2\epsilon^{2/5} \}} |\nabla^k \underline{\boldsymbol{u}}_\epsilon|_{g^{\rm cyl}_\epsilon} &\leq C \epsilon^2 \epsilon^{\frac{1}{5}} \epsilon^{-\frac{2}{5}k}, \nonumber
\end{aligned}
\end{equation}
where $g_\epsilon^{\rm cyl} = g_{\mathbb{R}^3} + \epsilon^2 \theta \otimes \theta$. Assuming \eqref{glue-collar}, let $\zeta: [0,\infty) \rightarrow [0,1]$ be a cutoff function with $\zeta|_{\{ \rho \leq 1 \}} \equiv 0$ and $\zeta|_{\{ \rho \geq 2 \}} \equiv 1$, and define $\zeta_\epsilon = \zeta(\epsilon^{-2/5} \rho)$ so that $|\nabla \zeta_\epsilon| \leq C \epsilon^{-2/5}$. Let
\begin{equation} \label{bg-metric-regions}
\underline{\boldsymbol{\omega}}_\epsilon =
\begin{cases}
\underline{\boldsymbol{\omega}}^{\rm tn}_\epsilon & \text{on } \{ R_0 \epsilon \leq \rho \leq \epsilon^{2/5} \} , \\
\underline{\boldsymbol{\omega}}^{\rm tn}_\epsilon + d (\zeta_\epsilon \underline{\boldsymbol{u}}_\epsilon) & \text{on } \{ \epsilon^{2/5} \leq \rho \leq 2\epsilon^{2/5} \}, \\
\underline{\boldsymbol{\omega}}_\epsilon^{\rm bulk}  & \text{on } \{ 2\epsilon^{2/5} \leq \rho \leq \rho_0 \}.
\end{cases}
\end{equation}
This describes the geometry on the gluing region $A_i^{\rm tn}$ near $p_i$. Inside the glued local model $N_{\epsilon}$, we set
\[
\underline{\boldsymbol{\omega}}_\epsilon = \underline{\boldsymbol{\omega}}^{\rm tn}_\epsilon \quad \text{on } \{ \rho \leq R_0 \epsilon \} \cap N_{\epsilon}.
\]
This local gluing is to be repeated at each $p_i$. Near each $q_j$, the collar $\{ R_0 \epsilon < \rho < \rho_0 \} \subseteq M_\epsilon^*$ is identified with $\{ R_0 \epsilon < \rho < \rho_0 \} \subseteq M_{j}$, where $(M_{j}, \epsilon^2 \underline{\boldsymbol{\omega}}_{M_j})$ is a $D_{m_j}$ ALF gravitational instanton. Similarly, here we have
\begin{equation} 
\underline{\boldsymbol{\omega}}_\epsilon =
\begin{cases}
\underline{\boldsymbol{\omega}}_{M_j} & \text{on } \{ R_0 \epsilon \leq \rho \leq \epsilon^{2/5} \} , \\
\underline{\boldsymbol{\omega}}_{M_j} + d (\zeta_\epsilon \underline{\boldsymbol{u}}_\epsilon) & \text{on } \{ \epsilon^{2/5} \leq \rho \leq 2\epsilon^{2/5} \}, \\
\underline{\boldsymbol{\omega}}_\epsilon^{\rm bulk}  & \text{on } \{ 2\epsilon^{2/5} \leq \rho \leq \rho_0 \}.
\end{cases}
\end{equation}
where $u_\epsilon$ is an interpolating term between the bulk triple and the local model triple satisfying \eqref{glue-collar}. On the remaining set $\{ \rho \geq \rho_0 \}$ outside of the union of the gluing collars $A_i$, $A_j$, we set $\underline{\boldsymbol{\omega}}_\epsilon =\underline{\boldsymbol{\omega}}_\epsilon^{\rm bulk}$. This defines the reference triple on the compact manifold $M_\epsilon$.

To complete the construction of the reference metric $g_\epsilon$, it remains to prove the bound \eqref{glue-collar}. We include the details for this at the end of this section in \S \ref{sec:global2single}, \S \ref{sec:single2multi} and \S \ref{sec:addup}.

\subsubsection{Comparison to cylindrical geometry}
The region of the compact manifold $M_\epsilon$ given by $\{ \rho \geq R_0 \epsilon \}$ stays away from the full depth of the bubbles. When working here, we will use the model cylindrical metric $g_\epsilon^{\rm cyl}$ to take norms.
\[
g_\epsilon^{\rm cyl} = g_{\mathbb{R}^3} + \epsilon^2 \theta \otimes \theta.
\]
We will often use the following rough estimates for the reference metric $g_\epsilon$: over $\{ \rho \geq \epsilon R_0 \}$, there holds
\begin{equation} \label{geps2cyl}
  \begin{aligned}
|g_\epsilon - g_\epsilon^{\rm cyl}|_{g_\epsilon^{\rm cyl}} &\leq \frac{1}{2},\\
    \rho^k |(\nabla_{g_\epsilon^{\rm cyl}})^k g_\epsilon|_{g_\epsilon^{\rm cyl}} &\leq C_k, 
    \end{aligned}
\end{equation}
for fixed $R_0 \gg 1$ and all $\epsilon$ small enough. Over this region we will often move between measuring tensors with respect to $g_\epsilon^{\rm cyl}$ or $g_\epsilon$.

To understand \eqref{geps2cyl}, recall that $g_\epsilon$ is built from two pieces: the bulk metric and the Taub-NUT metric. 

\begin{itemize}
\item Comparison of $g_\epsilon^{\rm bulk}$ to $g_\epsilon^{\rm cyl}$. Recall $h_\epsilon = 1 + \epsilon h$ with $h$ locally given by \eqref{h-sing}, and recall $g_\epsilon^{\rm bulk}$ is defined in \eqref{bulk-metric}. Since $\rho^k \partial^k h$ is a bounded function on $\mathbb{T}$, the bound $\rho^k | (\nabla_\epsilon^{\rm cyl})^k g_\epsilon^{\rm bulk}|_{g_\epsilon^{\rm cyl}} \leq C$ holds on $P \cap \{ \rho > R_0 \epsilon \}$. This is all that is needed for the bulk part of the rough estimate \eqref{geps2cyl}, but later on we will also use the following refinements.

Consider the harmonic function $h_\epsilon$ restricted to $\{ \rho \geq \epsilon^{\frac{2}{5}} \}$. By construction,
\[
\sup_{\{ \rho = \epsilon^{2/5} \}} |h_\epsilon - 1| \leq C \epsilon^{\frac{3}{5}}.
\]
Since $h_\epsilon$ is harmonic, by the maximum principle this bound holds on $\{ \rho \geq \epsilon^{\frac{2}{5}} \}$. Next, on the flat torus $\mathbb{T}^*$ the partial derivatives $\partial_{x_i} h$ are well-defined harmonic functions. Differentiating \eqref{h-sing} gives
\[
\sup_{\{ \rho = \epsilon^{2/5} \}} |\partial h_\epsilon| \leq C \epsilon^{\frac{1}{5}}.
\]
Therefore
\begin{equation} \label{h-estimates}
\sup_{\{ \rho \geq \epsilon^{2/5} \}} |h_\epsilon - 1| \leq C \epsilon^{\frac{3}{5}}, \quad \sup_{\{ \rho \geq \epsilon^{2/5} \}} |\partial h_\epsilon| \leq C \epsilon^{\frac{1}{5}}.
\end{equation}
From \eqref{h-estimates}, we derive comparison estimates.
\begin{equation} \label{bulk2cyl}
\begin{aligned} 
\sup_{\{ \rho \geq \epsilon^{2/5} \}} |g_\epsilon^{\rm bulk} - g_\epsilon^{\rm cyl}|_{g_\epsilon^{\rm cyl}} &\leq C \epsilon^{\frac{3}{5}}, \\
  \sup_{\{ \rho \geq \epsilon^{2/5} \}} |\nabla_\epsilon^{\rm cyl} g_\epsilon^{\rm bulk}|_{g_\epsilon^{\rm cyl}} &\leq C \epsilon^{\frac{1}{5}}. 
\end{aligned}
\end{equation}
We will use these estimates in later on in Section \S \ref{sec:invertL}.

\item Comparison of $g^{\rm tn}_\epsilon$ to $g_\epsilon^{\rm cyl}$. The explicit expression for the rescaled Taub-NUT metric is given in \eqref{explicit-local-tn}, and working with this directly leads to the following estimates over ${\{ \epsilon R_0 \leq \rho \leq \rho_0 \}}$:
  \begin{equation} \label{bdd-htn}
|h^{\rm tn}_\epsilon-1| \leq C \epsilon \rho^{-1}, \quad h_\epsilon^{\rm tn} \geq C^{-1}, \quad \rho^k |\partial^k h^{\rm tn}_\epsilon| \leq C_k \epsilon \rho^{-1}.
    \end{equation}
Using this with $|S^*_{\epsilon^{-1}} \mathcal{A}^{\rm tn}|_{\mathbb{R}^3} \leq C \rho^{-1}$ and $|\mathcal{A}|_{\mathbb{R}^3} \leq C \rho^{-1}$ (see \eqref{bulk-A}) to control off-diagonal terms involving connection forms leads to
\begin{equation} \label{g-tn-infty}
  \begin{aligned}
 |g^{\rm tn}_\epsilon - g_\epsilon^{\rm cyl}|_{g_\epsilon^{\rm cyl}} &\leq C \epsilon  \rho^{-1}, \quad {\rm on} \ {\{ \epsilon R_0 \leq \rho \leq \rho_0 \}} \\
 |\nabla_\epsilon^{\rm cyl} g^{\rm tn}_\epsilon |_{g_\epsilon^{\rm cyl}} &\leq C \epsilon \rho^{-2}, \quad {\rm on}  \ {\{ \epsilon R_0 \leq \rho \leq \rho_0 \}} .
\end{aligned}
\end{equation}
The bound
\[
  \rho^k | (\nabla_\epsilon^{\rm cyl})^k g^{\rm tn}_\epsilon |_{g_\epsilon^{\rm cyl}} \leq C_k
\]
can also be obtained from the explicit expression \eqref{explicit-local-tn}.

\item The reference metric $g_\epsilon$ obtained by the reference triple \eqref{bg-metric-regions} also involves the interpolating term $d (\zeta_\epsilon u_\epsilon)$ on the gluing region. These are small by \eqref{glue-collar}. We also note that there are similar estimates for the metrics near $q_j$, and we omit the derivation and refer to \cite{FoscoloK3}. Putting everything together, we obtain the rough estimate \eqref{geps2cyl}.

\end{itemize}

For later use (case 2b in Lemma \ref{lem:key-estimate}), we also note that combining \eqref{glue-collar} \eqref{bulk2cyl}, \eqref{g-tn-infty} and keeping the worst power of $\epsilon$ gives
\begin{equation} \label{long-region-cyl}
\begin{aligned}
\sup_{\{ \epsilon R^{(1+\tau)/2} \le \rho \le \rho_0 \}}
    |g_\epsilon - g_\epsilon^{\rm cyl}|_{g_\epsilon^{\rm cyl}}
    &\le C \bigl(\epsilon^{\frac{3}{5}} + R^{-(1+\tau)/2}\bigr), \\[6pt]
\sup_{\{ \epsilon R^{(1+\tau)/2} \le \rho \le \rho_0 \}}
    |\nabla_\epsilon^{\rm cyl} g_\epsilon|_{g_\epsilon^{\rm cyl}}
    &\le C \bigl(\epsilon^{\frac{1}{5}} + \epsilon^{-1} R^{-(1+\tau)}\bigr),
  \end{aligned}
\end{equation}
for $0<\tau<1$ and $R \gg 1$.

\subsubsection{Basic estimates of the reference triple}
We note some bounds on quantities involving $\underline{\boldsymbol{\omega}}_\epsilon$. First, we note that for $k \geq 1$,
\begin{equation} \label{derivs-reftriple}
|\underline{\boldsymbol{\omega}}_\epsilon|_{g_\epsilon} \leq C, \quad \rho^k |\nabla^k \underline{\boldsymbol{\omega}}_\epsilon|_{g_\epsilon} \leq C_k.
\end{equation}
Indeed, we have
\[
| \omega_{\epsilon,i} |^2_{g_\epsilon} = 2,  \  \ \nabla \omega_{\epsilon,i} =0 \quad {\rm outside} \ \{ \epsilon^{2/5} \leq \rho \leq 2 \epsilon^{2/5} \},
\]
since it is a hyperk\"ahler triple there. Next,
\[
| \omega_{\epsilon,i} |_{g_\epsilon} \leq C \bigg( |\omega^{\rm tn}_{\epsilon,i}|_{g_\epsilon^{\rm cyl}} + |\nabla \zeta_\epsilon|_{g^{\rm cyl}_\epsilon} |u_{\epsilon,i}|_{g^{\rm cyl}_\epsilon} + |\zeta_\epsilon|_{g^{\rm cyl}_\epsilon} |\nabla u_{\epsilon,i}|_{g^{\rm cyl}_\epsilon} \bigg)
\]
over $\{ \epsilon^{2/5} \leq \rho \leq 2 \epsilon^{2/5} \}$ near $p_i$ after converting to $g_\epsilon^{\rm cyl}$ by \eqref{geps2cyl}. Applying bounds on $u$ \eqref{glue-collar} and estimating the local model \eqref{explicit-local-tn} directly, we can obtain \eqref{derivs-reftriple} for $k=0$. For higher $k \geq 1$, we use the following general bound for conversion between connections on $g_\epsilon$ and $g_\epsilon^{\rm cyl}$: for any tensor $T$, there holds
\begin{equation} \label{nabla2nablacyl}
|\nabla^{g_\epsilon} T|_{g_\epsilon} \leq C  \bigg( |\nabla^{\rm cyl}_\epsilon T |_{g^{\rm cyl}_\epsilon} +  |\nabla^{\rm cyl}_\epsilon g_{\epsilon} |_{g^{\rm cyl}_\epsilon} | T|_{g_\epsilon^{\rm cyl}} \bigg).
\end{equation}
Applying this to $\omega_{\epsilon,i}$ gives a bound of the form
\[
\rho |\nabla \omega_{\epsilon,i}|_{g_\epsilon} \leq C \bigg( \rho |\partial h^{\rm tn}_{\epsilon} | + \rho \sum_{k=0}^2 |\nabla^k \zeta_\epsilon|_{g_\epsilon^{\rm cyl}}  |\nabla^{2-k} u_{\epsilon,i}|_{g_\epsilon^{\rm cyl}}  +  |\nabla^{\rm cyl}_\epsilon g_{\epsilon} |_{g^{\rm cyl}_\epsilon}  \bigg),
\]
on the collar region. Applying \eqref{glue-collar}, \eqref{geps2cyl}, and \eqref{bdd-htn} proves \eqref{derivs-reftriple} for $k=1$.
We omit the proof of higher $k \geq 2$. We note that we have also omitted the analysis near $q_j$ as we will not need this; we refer to \cite{FoscoloK3}. By a similar argument, we can also estimate the failure of orthogonality
\begin{equation} \label{reftriple-ortho}
|\langle \omega_{\epsilon,i}, \omega_{\epsilon,j} \rangle_{g_\epsilon} | \leq C \epsilon^2 \epsilon^{-\frac{1}{5}}, \quad |\omega_{\epsilon,i}|^2_{g_\epsilon} \geq 2- C  \epsilon^2 \epsilon^{-\frac{1}{5}}.
\end{equation}
Let us estimate the volume of $(M_\epsilon,g_\epsilon)$. By \eqref{geps2cyl}, we have
\[
\int_{ \{ \rho \geq \epsilon R_0 \}} d {\rm vol}_{g_\epsilon} \leq C \epsilon,
\]
since the geometry is equivalent to a cylinder on this region. On the other hand, near $p_i$ the metric is $g_\epsilon = \epsilon^2 S_{\epsilon^{-1}}^* g^{\rm tn}$, so pulling back integrals gives
\[
\int_{ \{ \rho \leq \epsilon R_0 \}} d {\rm vol}_{g_\epsilon} = \int_{ \{ \rho \leq R_0 \} \cap N_i } d {\rm vol}_{\epsilon^2 g^{\rm tn}} \leq C \epsilon^4
\]
and the estimate near $q_j$ is analogous. Hence
\begin{equation} \label{vol-mfd}
\int_{M_\epsilon} d {\rm vol}_{g_\epsilon} \leq C \epsilon.
  \end{equation}
Finally, we note that estimate \eqref{glue-collar} implies the following error estimate over the interpolation region near $p_i$
\begin{equation} \label{collar-error}
\sup_{\{ \epsilon^{2/5} \leq \rho \leq 2 \epsilon^{2/5} \}} |\nabla^k (\underline{\boldsymbol{\omega}}_\epsilon - \underline{\boldsymbol{\omega}}^{\rm tn}_\epsilon) |_{g_\epsilon} \leq C \epsilon^{2- \frac{1}{5}} \epsilon^{-\frac{2}{5}k}.
\end{equation}
after converting $\nabla$ to $\nabla^{\rm cyl}$ as in \eqref{nabla2nablacyl}. Estimate \eqref{collar-error} will be used in the gluing construction, since our approximate solutions will turn out to be true solutions with respect to $\underline{\boldsymbol{\omega}}^{\rm tn}$ and we will need control of the error terms on this interpolation region.

\subsection{Genuine metric} \label{sec:genuine}
We have a compact 4-manifold $M_\epsilon$ with a reference triple $\underline{\boldsymbol{\omega}}_\epsilon$. The triple is deformed by
\begin{equation} \label{genuine-ansatz}
\underline{\boldsymbol{\tilde{\omega}}}_\epsilon = \underline{\boldsymbol{\omega}}_\epsilon + d \underline{\boldsymbol{a}}_\epsilon + \underline{\boldsymbol{\zeta}}_\epsilon
\end{equation}
where $\underline{\boldsymbol{a}}_\epsilon \in \Omega^1(M_\epsilon)^{\oplus 3}$ and
\[
\underline{\boldsymbol{\zeta}}_\epsilon = B \cdot \underline{\boldsymbol{\omega}}_\epsilon,
\]
with $B$ a constant $3 \times 3$ matrix depending on $\epsilon$. In other words, $\underline{\boldsymbol{\zeta}}_\epsilon \in \mathcal{H}^+ \otimes \mathbb{R}^3$ where $\mathcal{H}^+$ is the space of self-dual harmonic forms with respect to the reference $g_\epsilon$, and since $\underline{\boldsymbol{\omega}}_\epsilon$ is hypersymplectic then ${\rm Span}(\omega_1,\omega_2,\omega_3)_x = \Lambda^+ T^*_x M$ \cite{Donaldson-2forms, FineYao, FoscoloK3}. We will sometimes use the component notation
\[
\underline{\boldsymbol{\zeta}}_\epsilon=(\zeta_{\epsilon,1}, \zeta_{\epsilon,2}, \zeta_{\epsilon,3}), \quad \zeta_{\epsilon,k} =  B^i{}_k \omega_{\epsilon,i}.
\]
Foscolo's theorem is the following:

\begin{thm} [Theorem 6.17 in \cite{FoscoloK3}] For all $\epsilon>0$ sufficiently small, there exists $(\underline{\boldsymbol{a}}_\epsilon, \underline{\boldsymbol{\zeta}}_\epsilon)$ such that
\begin{equation} \label{genuine-eqn}
{1 \over 2} \tilde{\omega}_{\epsilon,a} \wedge \tilde{\omega}_{\epsilon,b} = \delta_{ab} \, d {\rm vol}_{\tilde{g}_{\underline{\boldsymbol{\omega}}}},
\end{equation}
together with an estimate: for all $\delta \in (- \frac{1}{2},0)$, there exists constants $C_k>1$ uniform in $\epsilon$ such that
\begin{equation} \label{genuine-est00}
\|\underline{\boldsymbol{a}}_\epsilon \|_{C^k_\delta} \leq C_k \epsilon^{\frac{11-2\delta}{5}}, \quad \| \underline{\boldsymbol{\zeta}}_\epsilon \|_{L^2} \leq C \epsilon^{\frac{11-2\delta}{5}}.
\end{equation}
for all $k \geq 0$.
\end{thm}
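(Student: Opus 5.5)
The plan is to follow Donaldson's formulation of the hyperk\"ahler condition as an elliptic system for a triple of closed 2-forms, and to solve it by a contraction mapping argument in weighted H\"older spaces adapted to the weight $\rho$ of \eqref{rho-defn}.

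\emph{The equation.} Substituting the ansatz \eqref{genuine-ansatz} into \eqref{genuine-eqn}, write $Q_{ab}(\underline{\boldsymbol{\tilde\omega}}) = \tilde\omega_a\wedge\tilde\omega_b/d{\rm vol}_{g_\epsilon}$. Equation \eqref{genuine-eqn} is equivalent to the trace-free part of $Q$ vanishing. Choose the volume normalisation as $\tfrac13\operatorname{tr}Q$, so that \eqref{hk-metric} is consistent. Impose the gauge $d^{*}\underline{\boldsymbol a}=0$ with respect to $g_\epsilon$. Using that ${\rm Span}(\omega_{\epsilon,i})=\Lambda^+$ pointwise, the equation becomes
\[
d^{*}\underline{\boldsymbol a}=0,\qquad d^+\underline{\boldsymbol a}+\underline{\boldsymbol\zeta}_\epsilon = -\,\underline{\boldsymbol{E}}_\epsilon - \mathcal Q(d\underline{\boldsymbol a}+\underline{\boldsymbol\zeta}_\epsilon).
\]
Here $\underline{\boldsymbol E}_\epsilon$ is the trace-free part of $Q(\underline{\boldsymbol\omega}_\epsilon)$, converted into a self-dual triple via the $\omega_{\epsilon,i}$. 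The term $\mathcal Q$ is quadratic, with the structure $(\eta\wedge\eta)$ projected to $\Lambda^+$. By \eqref{reftriple-ortho} and \eqref{collar-error}, $\underline{\boldsymbol E}_\epsilon$ is supported in the collars $\{\epsilon^{2/5}\le\rho\le2\epsilon^{2/5}\}$, with $|\nabla^k\underline{\boldsymbol E}_\epsilon|\le C\epsilon^{9/5}\epsilon^{-2k/5}$; analogous collars near $q_j$ are taken from \cite{FoscoloK3}. Since $\rho\sim\epsilon^{2/5}$ there, this gives
\[
\|\underline{\boldsymbol E}_\epsilon\|_{C^{k,\alpha}_{\delta-1}}\le C\epsilon^{\frac95}\epsilon^{\frac25(1-\delta)}=C\epsilon^{\frac{11-2\delta}{5}},
\]
which is exactly the rate claimed in \eqref{genuine-est00}.

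\emph{Linear theory.} The linear operator is
\[
D_\epsilon:(\underline{\boldsymbol a},B)\mapsto (d^*\underline{\boldsymbol a},\,d^+\underline{\boldsymbol a}+B\cdot\underline{\boldsymbol\omega}_\epsilon)
\]
from $C^{k+1,\alpha}_\delta\oplus\mathbb{R}^{9}$ to $C^{k,\alpha}_{\delta-1}$. The cokernel of $d^*+d^+$ on the compact $M_\epsilon$ is $\mathcal H^0\oplus\mathcal H^+$. The constant function is handled by restricting to mean-zero data. The three-dimensional $\mathcal H^+$ is absorbed by the matrix $B$: the forms $\omega_{\epsilon,i}$ are $L^2$-close to $g_\epsilon$-harmonic self-dual forms, since they are closed with small failure of self-duality. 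The key step, and the one I expect to be the main obstacle, is a \textbf{uniform in $\epsilon$} estimate
\[
\|\underline{\boldsymbol a}\|_{C^{k+1,\alpha}_\delta}+|B|\le C\|D_\epsilon(\underline{\boldsymbol a},B)\|_{C^{k,\alpha}_{\delta-1}},\qquad \underline{\boldsymbol a}\perp\mathcal H^1,
\]
for $\delta\in(-\tfrac12,0)$. I would prove it by contradiction and blow-up. Take a sequence $\epsilon_n\to0$ with normalised $\underline{\boldsymbol a}_n$, and rescale around the points where the weighted norm concentrates. This produces one of the following limit configurations:
\begin{itemize}
\item a multi-Taub-NUT space (or a $D_m$ ALF space) with a decaying solution of $(d^*+d^+)a=0$ of rate $\rho^{\delta}$;
\item $\mathbb{R}^3\times S^1$ or its collapse $\mathbb{R}^3$;
\item the collapsed flat $\mathbb{T}^3/\mathbb{Z}_2$ with punctures.
\end{itemize}
On the collapsed pieces, $S^1$-invariant forms reduce to the 3-dimensional operator $(f,\alpha)\mapsto(d^*\alpha,\ d\alpha+\star df)$ and non-invariant modes decay exponentially. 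The rate $\delta\in(-\tfrac12,0)$ is chosen to avoid the indicial roots $0$ and $-1$ at the punctures, so that limiting kernels on $\mathbb{T}^3$ are harmonic, hence constant, hence excluded by orthogonality. On the ALF bubbles, $L^2$ harmonic forms of decay $\rho^{\delta}$ with $\delta<0$ lie in the anti-self-dual $L^2$ cohomology, and these are not seen by $d^*+d^+$, so the kernel vanishes. The delicate point is the matching of the $\rho^\delta$ weight with the $\epsilon$-scaling on the neck $R_0\epsilon\le\rho\le\rho_0$. Here I would use \eqref{geps2cyl} to treat the neck as a perturbation of $\mathbb{R}^3\times S^1_\epsilon$.

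\emph{Nonlinear step.} Write $D_\epsilon(\underline{\boldsymbol a},B)=-\underline{\boldsymbol E}_\epsilon-\mathcal Q(\cdot)$ as a fixed point problem. The quadratic estimate is
\[
\|\mathcal Q(\eta)-\mathcal Q(\eta')\|_{C^{k,\alpha}_{\delta-1}}\le C\rho_{\min}^{\delta-1}\,(\|\eta\|+\|\eta'\|)\|\eta-\eta'\|,
\]
with $\eta$ measured in $C^{k,\alpha}_{\delta-1}$; the $\rho$-weights cost at most a factor $\epsilon^{\delta-1}$ from $\rho\ge\epsilon$. This is dominated by the smallness $\epsilon^{(11-2\delta)/5}$ of the error, since $(11-2\delta)/5+\delta-1>1>0$. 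Contraction on a ball of radius $2C\epsilon^{(11-2\delta)/5}$ then gives the solution. The bound on $\|\underline{\boldsymbol\zeta}_\epsilon\|_{L^2}$ follows from $|B|$ and $\|\underline{\boldsymbol\omega}_\epsilon\|_{L^2}\le C\epsilon^{1/2}$ by \eqref{vol-mfd}. Higher $C^k$ bounds come from weighted Schauder estimates applied to the same equation.
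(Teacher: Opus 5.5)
The paper gives no proof of this statement; it quotes Theorem 6.17 of \cite{FoscoloK3}. Your outline is the Donaldson--Foscolo scheme behind that theorem: the gauge $d^*\underline{\boldsymbol a}=0$, the operator $d^*+d^+$ augmented by $B\cdot\underline{\boldsymbol\omega}_\epsilon$ to absorb $\mathcal H^+\otimes\mathbb R^3$, a blow-up proof of a uniform weighted estimate, and a contraction. Your computation $\epsilon^{9/5}\epsilon^{\frac25(1-\delta)}=\epsilon^{\frac{11-2\delta}{5}}$ correctly identifies the source of the rate.

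However, the step you use to kill limiting kernels on the collapsed base fails as written, and uniformity hinges on it. You say the limiting harmonic forms on $\mathbb T^3$ are constant and ``excluded by orthogonality''. But $b_1(K3)=0$, so $\mathcal H^1(M_\epsilon)=0$ and the condition $\underline{\boldsymbol a}\perp\mathcal H^1$ is vacuous. It cannot exclude the limits $\alpha=\sum_i c_i\,dx_i$ and $f\equiv c$, where $f$ is the coefficient of $\epsilon\theta$. What kills them is the involution. Since $\tau^*\underline{\boldsymbol\omega}_\epsilon=\underline{\boldsymbol\omega}_\epsilon$, each component of $\underline{\boldsymbol a}$ lifts to a $\tau$-invariant form. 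On the other hand $\tau^*dx_i=-dx_i$ and $\tau^*\theta=-\theta$. So the fibre-independent limit $(f,\alpha)$ is harmonic on $\mathbb T^3$, with removable singularities at the punctures because $\delta>-1$, hence constant. A constant $f$ with $f\circ\tau=-f$, or a constant-coefficient $\alpha$ with $\tau^*\alpha=\alpha$, must vanish. This parity mechanism is what separates the $K3$ problem from the two-dimensional problem in the paper's \S\ref{sec:quad-est}. There the analogous limits $c_1\,dx+c_2\theta$ on the collapsed interval survive, which forces the restriction to $\mathcal H^\perp$ and the $\epsilon^{-1/2}$ loss. Without the involution, e.g.\ on $\mathbb T^4$, these constants are genuine harmonic forms and orthogonality would be the right tool.

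Smaller points:
\begin{itemize}
\item On the ALF bubbles, the vanishing of decaying kernel elements is a statement about $1$-forms. The $L^2$ cohomology of anti-self-dual $2$-forms concerns the cokernel of $d^+$, not this kernel. The clean argument: $d^*a=0=d^+a$ gives $d^*da=2d^*d^+a=0$, so $a$ is harmonic and $\nabla^*\nabla a=0$ by Bochner on a Ricci-flat space. Then $|a|^2$ is subharmonic and decays like $\rho^{2\delta}$ with $\delta<0$, hence $a\equiv0$.
\item The forms $\omega_{\epsilon,i}$ are exactly $g_\epsilon$-self-dual, since the definite triple fixes the conformal class, and they are closed. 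So they are exactly harmonic and span $\mathcal H^+(g_\epsilon)$; no approximation argument is needed.
\item The neck limits are punctured $\mathbb R^3$, or its $\mathbb Z_2$-quotient near $q_j$, not $\mathbb R^3\times S^1$. There removability at the origin uses $\delta>-1$ and Liouville at infinity uses $\delta<0$.
\item $(11-2\delta)/5+\delta-1=(6+3\delta)/5$ exceeds $1$ only for $\delta>-1/3$. Positivity is all the contraction needs, and that holds on $(-\tfrac12,0)$.
\end{itemize}
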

Therefore $\underline{\boldsymbol{\tilde{\omega}}}_\epsilon$ is a hyperk\"ahler triple on $M_\epsilon$ which is well-approximated by the reference triple $\underline{\boldsymbol{\omega}}_\epsilon$. Here the weighted norms are with respect to the reference metric $g_\epsilon$, so that 
  \[
\| T \|_{C^k_\delta} = \sum_{i=0}^k \sup_{M_\epsilon} \rho^i \rho^{-\delta} |\nabla^i T|_{g_\epsilon},
  \]
and e.g.
\[
\rho |d \underline{\boldsymbol{a}}_\epsilon|_{g_\epsilon} \leq C \rho^\delta \epsilon^{\frac{11-2\delta}{5}}.
\]
By choosing $\delta<0$ close to zero and using $\rho \geq \epsilon$ we may demote a $\frac{1}{5}$ to $\frac{1}{6}$.
\begin{equation} \label{da-est}
\rho |d \underline{\boldsymbol{a}}_\epsilon|_{g_\epsilon} \leq C \epsilon^{2+\frac{1}{6}}.
\end{equation}
Next, we derive an estimate on the constant matrix $B$.
\begin{equation} \label{B-est}
|B| \leq C \epsilon^{\frac{17-4\delta}{10}}.
  \end{equation}
Indeed, by \eqref{reftriple-ortho}, we have
\[
|\zeta_\epsilon|^2 =\bigg| \sum_i B^i{}_k \omega_{\epsilon,i} \bigg|^2_{g_\epsilon} \geq  \sum_i |B^i{}_k|^2,
\]
and therefore
 \begin{equation}
\epsilon^{1/2} |B|  \leq C \bigg( \int_{M_\epsilon} |\zeta_\epsilon|^2 \, d {\rm vol}_{g_\epsilon} \bigg)^{1/2} \leq C \epsilon^{\frac{11-2\delta}{5}}
 \end{equation}
by ${\rm vol}(M_\epsilon,g_\epsilon) = O(\epsilon)$ \eqref{vol-mfd} and \eqref{genuine-est00}. This proves \eqref{B-est}. Putting together these bounds, we obtain
\begin{equation} \label{genuine-est0}
  \begin{aligned}
    \rho |\underline{\boldsymbol{\tilde{\omega}}}_\epsilon - \underline{\boldsymbol{\omega}}_\epsilon |_{g_\epsilon} &\leq C (\rho^\delta \epsilon^{\frac{11-2\delta}{5}}+ \epsilon^{\frac{11-2\delta}{5}} \epsilon^{-\frac{1}{2}} \rho)\\
 \rho^2 |\nabla(\underline{\boldsymbol{\tilde{\omega}}}_\epsilon - \underline{\boldsymbol{\omega}}_\epsilon) |_{g_\epsilon}  &\leq C (\rho^\delta \epsilon^{\frac{11-2\delta}{5}}+ \epsilon^{\frac{11-2\delta}{5}} \epsilon^{-\frac{1}{2}} \rho^2)
  \end{aligned}
    \end{equation}
    Here we used \eqref{derivs-reftriple}, \eqref{genuine-est00} and \eqref{B-est}. By choosing $\delta < 0$ close to zero, we obtain
 \begin{equation} \label{genuine-est2}
\rho |\underline{\boldsymbol{\tilde{\omega}}}_\epsilon - \underline{\boldsymbol{\omega}}_\epsilon |_{g_\epsilon}  \leq C \epsilon^{2+\frac{1}{6}} (1+ \epsilon^{-\frac{1}{2}} \rho).
\end{equation}
It also follows from $\rho \geq \epsilon$ and the discussion above that
 \begin{equation} \label{genuine-est3}
\rho |\nabla (\underline{\boldsymbol{\tilde{\omega}}}_\epsilon - \underline{\boldsymbol{\omega}}_\epsilon) |_{g_\epsilon} +\rho^2 |\nabla^2 (\underline{\boldsymbol{\tilde{\omega}}}_\epsilon - \underline{\boldsymbol{\omega}}_\epsilon) |_{g_\epsilon}   \leq \epsilon^{1+\frac{1}{6}}.
\end{equation}
where we also include second order derivatives, whose estimate follows in a similar way, for later use (\S \ref{sec:est-errors}).

\subsection{Error estimates for the reference metric}
We now give the details on the bound \eqref{glue-collar}, which concerns the reference metric $\underline{\boldsymbol{\omega}}_\epsilon$ and estimates the error term interpolating between the local model and the bulk model. These estimates can also be found in \cite{FoscoloK3}.

  \subsubsection{Difference between global geometry and single charge} \label{sec:global2single}
We start by matching $\underline{\boldsymbol{\omega}}^{\rm bulk}_\epsilon$ near $p_i$ to a Taub-NUT space with a single charge of weight $k_i$ at the origin. Recall that the triple $\underline{\boldsymbol{\omega}}_\epsilon^{\rm bulk}$ given in \eqref{bulk-metric} is built from the harmonic function $h_\epsilon$, which near $p_i$ is 
\begin{equation} \label{h-pi}
h_\epsilon \overset{\rm loc}{=} h_{p_i} + \epsilon O(\rho^2),
\end{equation}
where
  \[
h_{p_i} = 1+\epsilon \lambda_i + \frac{\epsilon k_i}{2 \rho} + \epsilon \ell_i.
\]
Define the local model triple $\underline{\boldsymbol{\omega}}^{p_i}$ by
\begin{equation}
\omega^{p_i}_\alpha = \epsilon dx_\alpha \wedge \theta^{p_i} +  h_{p_i} dx_\beta \wedge dx_\gamma, \quad \epsilon_{\alpha \beta \gamma}=1,
\end{equation}
where $\theta^{p_i} = d \psi - i \mathcal{A}^{k_i}$ and $\mathcal{A}^{k_i}$ is defined in \eqref{multi-A}, so that
\begin{equation} \label{bulk-A2}
\mathcal{A} = \mathcal{A}^{k_i} + O(\rho^2)
\end{equation}
The difference between the global form and this local model is captured by
\[
\underline{\boldsymbol{\eta}} = \underline{\boldsymbol{\omega}}_\epsilon^{\rm bulk} - \underline{\boldsymbol{\omega}}^{p_i}, \quad d \underline{\boldsymbol{\eta}} = 0, \quad \underline{\boldsymbol{\eta}} \in \Omega^2(\{ 0< \rho \leq \rho_0 \})^{\oplus 3}.
\]
By \eqref{h-pi} and \eqref{bulk-A2}, we have the error estimate
\begin{equation} \label{eta-decay}
 |\underline{\boldsymbol{\eta}} |_{g^{\rm cyl}_\epsilon} \leq C \epsilon \rho^2.
\end{equation}
Here the norm is with respect to the metric $g^{\rm cyl}_\epsilon = g_{\mathbb{R}^3} + \epsilon^2 \theta^2$. Next, we find a potential $\eta = d u$. Write $(\rho, \phi)$ for polar coordinates on $\mathbb{R}^3$, so that local coordinates on the circle bundle restricted to $\{ 0 < \rho \leq \rho_0 \}$ are given by $(\rho,\phi,\psi)$. Denote the angle coordinates by $\varphi = (\phi,\psi)$, and write
\begin{equation} \label{eta-polar}
\eta = \alpha_i \, d \rho \wedge d \varphi^i + \gamma_{ij} \, d \varphi^i \wedge d \varphi^j.
\end{equation}
Since $d \eta =0$ then $\partial_\rho \gamma_{ij} = \partial_i \alpha_j$. We can then define the potential by
\[
u = \bigg[ \int_0^{\rho} \alpha_j(s, \cdot) \, ds \bigg] \, d \varphi^j.
\]
The integral is well-defined since $\alpha_j \rightarrow 0$ as $\rho \rightarrow 0$. This construction gives $u = \epsilon O(\rho^3)$, and so
\begin{equation} \label{short-est}
 \underline{\boldsymbol{\omega}}_\epsilon^{\rm bulk} = \underline{\boldsymbol{\omega}}^{p_i} + d \underline{\boldsymbol{u}}^{p_i} , \quad |\nabla^k \underline{\boldsymbol{u}}^{p_i}|_{g^{\rm cyl}_\epsilon} \leq C \epsilon \rho^{3-k}.
\end{equation}
We will only need this on the collar $\{ R_0 \epsilon \leq \rho \leq \rho_0 \}$.

\subsubsection{Difference between single charge and multi-charge} \label{sec:single2multi}
Next, we relate the local model $(N, g^{\rm tn})$ which has $k_i$ points of charge 1 to the space with a single charge of weight $k_i$ at the origin. Namely, we compare
\[
S_\epsilon^* h_{p_i} = 1+\epsilon \lambda_i + \frac{k_i}{2 |x|} + \epsilon^2 \ell_i,
\]
to
\[
h_{\rm tn} = 1 + \epsilon \lambda_i + \sum_{a=1}^{k_i} \frac{1}{2|x-y_a|} + \epsilon^2 \ell_i,
\]
at large distances. The second order Taylor expansion of $f(x) = |x|^{-1}$ is
\[
f(x+y) = \frac{1}{|x|} - \frac{x \cdot y}{|x|^3} + O \left( \frac{|y|^2}{|x|^3} \right),
\]
for $|x| \gg 1$, and so
\[
\frac{k_i}{|x|} - \sum_{a=1}^{k_i} \frac{1}{|x-y_a|} = - \frac{x}{|x|^3} \cdot \sum_{a=1}^{k_i} y_a + O(|x|^{-3}),
  \]
once $|x| \gg R_0$, where $R_0 \gg 1$ depends on $\{y_a\}$, is large enough. Since the $y_a$ are chosen such that the dipole moment vanishes \eqref{zero-dipole}, we have that
\begin{equation}  \label{zero-dipole2}
|h_{\rm tn} - S_\epsilon^* h_{p_i}| \leq C \rho^{-3}.
\end{equation}
We will also need
\[
\left| k_i \mathcal{A}^{\rm mono}_0 - \sum_{a=1}^{k_i} \mathcal{A}^{\rm mono}_{y_a} \right|_{\mathbb{R}^3} \leq C \rho^{-3}, \quad |x| \gg R_0,
\]
which can be derived in a similar way after rewriting \eqref{A-single} as
\[
\mathcal{A}^{\rm mono}_{y} = \frac{i}{2} \bigg( 1 + \frac{x_3-y_3}{|x-y|} \bigg) \bigg( \frac{-(x_2-y_2) dx_1 + (x_1-y_1)dx_2}{(x_1-y_1)^2+(x_2-y_2)^2} \bigg),
  \]
and expanding $\mathcal{A}_y = \mathcal{A}_0(x) - y^i \partial_i \mathcal{A}_0(x) + O(|y|^2|x|^{-3})$. Define the error by
\begin{equation} \label{multi2single}
\underline{\boldsymbol{\eta}} = S_\epsilon^* \underline{\boldsymbol{\omega}}^{p_i} - \epsilon^2 \underline{\boldsymbol{\omega}}^{\rm tn}, \quad d \underline{\boldsymbol{\eta}} = 0, \quad \underline{\boldsymbol{\eta}} \in \Omega^2 (\{ R_0 \leq \rho < \infty \})^{\oplus 3}.
\end{equation}
Then the $\rho^{-3}$-estimates above on $h$ and $\mathcal{A}$ give
\begin{equation} 
 |\underline{\boldsymbol{\eta}}|_{g^{\rm cyl}} \leq C \epsilon^2 \rho^{-3}.
\end{equation}
We can again write $\eta$ using polar coordinates \eqref{eta-polar} and define a potential $\eta = du$ by radial integration:
\[
u = - \bigg[ \int_\rho^\infty \alpha_j (s, \cdot) ds \bigg] d \varphi^j.
\]
The potential is of order
\begin{equation} \label{out-est}
  | \nabla^k u|_{g^{\rm cyl}} \leq C \epsilon^2 \rho^{-2-k}.
\end{equation}
We now transpose this local model to the collar region on the compact manifold by pullback. Upon setting $u^{\rm out} =  S_{\epsilon^{-1}}^* u$ and pulling back \eqref{multi2single} and \eqref{out-est} by $S_{\epsilon^{-1}}$, we obtain
\begin{equation} \label{outest}
\underline{\boldsymbol{\omega}}^{\rm tn}_\epsilon = \underline{\boldsymbol{\omega}}^{p_i}  + d \underline{\boldsymbol{u}}^{\rm out}, \quad |\nabla^k \underline{\boldsymbol{u}}^{\rm out} |_{g^{\rm cyl}_\epsilon} \leq C \epsilon^3 \rho^{-2-k}
\end{equation}
on the collar $\{ \epsilon R_0 \leq \rho \leq \rho_0 \}$. 

\subsubsection{Adding up the potentials} \label{sec:addup}
We therefore have
\[
\underline{\boldsymbol{\omega}}_\epsilon^{\rm bulk} = \underline{\boldsymbol{\omega}}^{\rm tn}_\epsilon + d (\underline{\boldsymbol{u}}^{p_i} - \underline{\boldsymbol{u}}^{\rm out}),
\]
on $\{ \epsilon R_0 \leq \rho \leq \rho_0 \}$. Let us restrict the estimates \eqref{short-est} and \eqref{outest} to $\{ \epsilon^{2/5} \leq \rho \leq 2 \epsilon^{2/5} \}$. The result is
\[
\sup_{\{ \epsilon^{2/5} \leq \rho \leq 2 \epsilon^{2/5} \}} | \nabla^k (\underline{\boldsymbol{u}}^{p_i} - \underline{\boldsymbol{u}}^{\rm out}) |_{g^{\rm cyl}_\epsilon} \leq C \epsilon^2 \epsilon^{\frac{1}{5}} \epsilon^{-\frac{2}{5} k}.
\]
This completes the proof of \eqref{glue-collar}.

\section{Construction of special Lagrangians}

\subsection{Deformation of special Lagrangians}
We now turn to the construction of special submanifolds in the ambient collapsing $K3$ surface. We can either take the point of view of holomorphic submanifolds or special Lagrangian submanifolds. We choose the special Lagrangian perspective.

Let $f: S^2 \rightarrow M$ be a smooth map parametrizing a 2-dimensional submanifold inside a $K3$ surface $(M,J,g,\omega,\Omega)$. This map can be deformed to a new map $f_a: S^2 \rightarrow M$ for each 1-form $a \in \Omega^1(S^2)$ via
\[
f_a = \exp_f J a^\sharp
\]
where $\exp$ is the exponential map of $g$. When the initial map $f: S^2 \rightarrow M$ satisfies $f^*\omega =0$ then $J a^\sharp$ is a normal vector field and all submanifolds nearby can be parametrized by a map of the form $f_a$ for some $a \in \Omega^1(S^2)$. Thus this ansatz is well-suited to explore nearby submanifolds, and in practice we will use this when the initial map satisfies $f^* \omega$ small rather than exactly zero.

We set up the special Lagrangian equation as looking for zeros of
\[
\mathcal{F} : \Lambda^1(S^2) \rightarrow \Lambda^0(S^2) \oplus \Lambda^2(S^2)
\]
where
\[
\mathcal{F}(a) = - f_a^* \omega - \star f_a^* {\rm Im} \, \Omega.
\]
When the initial map $f$ parametrizes a special Lagrangian submanifold, equation $\mathcal{F}(a)=0$ and its linearization play a central role in the study of the local moduli space of solutions; standard references include e.g. \cite{Marshall} or \cite{JoyceSurvey}. Here we will look for solutions to $\mathcal{F}(a) = 0$ from an initial map $f$ which is an approximate solution.


\subsection{Approximate solution} \label{sec:approxsoln}
\subsubsection{The sphere from bubble to bubble} \label{sec:bubble2bubble}
The goal here is to start from a line $L_0$ on a 3-dimensional affine base $B^3 = \mathbb{T}^3/ \mathbb{Z}_2$, and lift it to special Lagrangian spheres $L_\epsilon \subseteq M_\epsilon$ collapsing to $L_0$ along a degeneration of $K3$ surfaces
\[
M_\epsilon \rightsquigarrow B^3.
\]
The ambient $K3$ surfaces will be reconstructed by Foscolo's gluing scheme summarized in Section \ref{sec:foscolosummarized}. To start the setup, we let $\{p_i \}_{i=1}^n$ be a collection of points on the base $B^3$, and $\{m_j \}_{j=1}^8$ and $\{k_i \}_{i=1}^n$ a collection of integers satisfying the balancing condition
\begin{equation} \label{banal}
\sum_{j=1}^8 m_j + \sum_{i=1}^n k_i = 16
\end{equation}
as before. Let $L_0 \subseteq B^3$ be an affine line segment between two points $p_i$, say $p_1$ and $p_2$, and suppose that $L_0$ does not intersect the singular points $q_j$. We can lift this curve to $\mathbb{T}^3$ where it appears as a $\tau$-invariant set $\gamma \subseteq \mathbb{T}^3$ consisting of two connected components. 

We can now begin to reconstruct the ambient $K3$ surface. The first step is to build from \eqref{banal} a harmonic function $h: \mathbb{T}^* \rightarrow \mathbb{R}$ with prescribed singularities as in Lemma \ref{harmonic-presc-sing} and a circle bundle $P \overset{\pi}{\rightarrow} \mathbb{T}^*$ with connection $\theta$ solving $d \theta = \star dh$. The pair of line segments $\gamma \subseteq \mathbb{T}^*$ lift to two cylinders
\[
  L_{\rm cyl} = \pi^{-1}(\gamma) \subseteq P.
\]
Passing to the quotient by $\tau$, we obtain a single cylinder on $M_\epsilon^*$ with each end entering the gluing region. The ambient space $M_\epsilon^*$ is compactified by gluing-in an ALF space at each marked point $p_i$ (Section \ref{sec:gluingregion}) and at each involution fixed point $q_j$. To $p_1$ and $p_2$, we will glue-in a particular multi-Taub-NUT local model $N_i$ with $k_i$ distinct points.
\begin{rk}
The gluing relation \eqref{gluing-ann} ensures that once the cylinder enters the gluing region, it remains a cylinder. In the notation introduced in Section \ref{sec:gluingregion}, the cylinder traverses the gluing region from $A_i$ to $A_i^{\rm tn}$ and appears as $L_{\rm cyl} \subseteq A_i^{\rm tn}$ where $\pi(L_{\rm cyl})$ is a straight line segment in $\{ R_0 \epsilon < \rho < \rho_0 \} \subseteq \mathbb{R}^3$ with same direction.
\end{rk}
Next, we specify how to place the points $\{ y_a\}_{a=1}^{k_i}$ in the Taub-NUT local models at $p_1$ and $p_2$. We work on $P \rightarrow \mathbb{T}^*$ prior to taking the quotient, and focus our attention to one component of $\gamma \subseteq \mathbb{T}^*$ connecting $p_1,p_2 \in \mathbb{T}$. We may choose a coordinate chart on $\mathbb{T}$ containing this component of $\gamma \subseteq \mathbb{T}^*$ and rotate coordinates on the 3-torus such that $v=e_1$ is the direction of the line segment in this component of $\gamma$ from $p_1$ to $p_2$. We place the points $\{ y_a \}$ in the Taub-NUT local model $N_1$ at $p_1$ according to the following two conditions:

\begin{enumerate}
\item Collinearity condition: the points $\{ y_a \}$ are all along the line passing through the origin with direction $e_1$.
  \item Vanishing dipole condition: $\sum y_a = 0$.
  \end{enumerate}

We label $y_1$ as the point with most positive $x$-coordinate. The collinearity condition is imposed so that all 2-spheres connecting the $y_a$ within the bubble are all special Lagrangian with respect to the same ambient Calabi-Yau structure. The vanishing dipole condition is used in \S \ref{sec:single2multi}.
  
We glue-in these local models to $P \rightarrow \mathbb{T}^*$ as described in \S \ref{sec:gluingregion}. We then glue the Taub-NUT local model $N_2$ at $p_2$ using the same procedure, except that $y_1$ denotes the point with the most negative $x$-coordinate, so that the cylinder will close again at $y_1$.

This gluing operation caps off the cylinder $L_{\rm cyl}$ at both ends; indeed, the cylinder extends into each gluing region and runs into the point labelled $y_1$, collapsing the $S^1$-fibers and closing-up the surface. We denote by $L_\epsilon \subseteq M_\epsilon$ the resulting topological 2-sphere. We refer to Figure \ref{fig:approxsoln} for a schematic.

\begin{rk}
We can drop the collinearity condition if we are only interested in constructing the elongated 2-sphere converging to $L_0$ in Theorem \ref{mainthm} and not the $k-1$ shrinking spheres within the bubble. In that case, there should be a point $y_1$ along the line passing through the origin with direction $e_1$ to close-up the cylinder $L_{\rm cyl}$.
  \end{rk}

\begin{figure} 

\begin{tikzpicture}
  \def\radius{1}

  \draw[dashed] (0,0) circle (\radius);
  \node[below] at (0,-\radius - 0.2) {$N_1$}; 

  \draw[dashed] (4,0) circle (\radius);
  \node[below] at (4,-\radius - 0.2) {$N_2$}; 

  \foreach \x/\y in {
    0/0,
    -0.5/0,
    0.5/0
  } {
    \fill (\x,\y) circle (0.07);
  }

  \foreach \x/\y in {
    3.5/0,
    4.5/0
  } {
    \fill (\x,\y) circle (0.07);
  }

  \draw[thick] (0.57,0) -- (3.43,0);
  \node[below] at (2, - 0.2) {$L$};
  \node[below] at (2, -\radius){$P \overset{\pi}{\rightarrow} \mathbb{T}^*$};
  
\end{tikzpicture}

\caption{Approximate solution traversing a pair of gluing regions.} \label{fig:approxsoln}
\end{figure}

  \subsubsection{Special Lagrangian cylinder}
Next, we look at the special Lagrangian equation on the cylindrical part of $L_\epsilon$. Recall that before taking the quotient by $\tau$ we have a pair of cylinders $L_{\rm cyl} \subseteq P$, and coordinates $(x ,e^{i \psi})$ are chosen such that the tangent space is
\[
T_p L_{\rm cyl} = {\rm span} \, \bigg\{ {\partial \over \partial x_1}, {\partial \over \partial \psi} \bigg\}.
\]
From the hyperk\"ahler structure $\underline{\boldsymbol{\omega}}_\epsilon^{\rm bulk}$ on $P$, we select the Calabi-Yau structure $(\omega,\Omega)$ which appears in this coordinate chart as
\begin{equation} \label{cy-approx}
\omega^{\rm bulk}_\epsilon = \epsilon dx_3 \wedge \theta + h_\epsilon dx_1 \wedge d x_2, \quad {\rm Im} \, \Omega^{\rm bulk}_\epsilon = \epsilon dx_2 \wedge \theta + h_\epsilon dx_3 \wedge dx_1,
\end{equation}
so that the special Lagrangian equations
\begin{equation} \label{bulk-slag}
\omega^{\rm bulk}_\epsilon|_{L_{\rm cyl}} = 0, \quad {\rm Im} \, \Omega^{\rm bulk}_\epsilon|_{L_{\rm cyl}} = 0
\end{equation}
hold over the cylinder $L_{\rm cyl} \subseteq P$.

\subsubsection{Special Lagrangian cigar} Next, we consider the special Lagrangian equation on the part of $L_\epsilon$ inside the gluing region $N_i \overset{\pi}{\rightarrow} U_{i,\epsilon}$. Recall that for example on $N_1$, we have
\[
L_{{\rm cig},\epsilon} \subseteq N_1, \quad  L_{{\rm cig},\epsilon} = \pi^{-1}( \{ \epsilon y_1 + \lambda e_1 : \lambda \geq 0 \} ).
\]
In other words, $L_{{\rm cig},\epsilon}$ is the pre-image of a ray emanating from $\epsilon y_1$ to infinity in direction $e_1$. As noted in Section \ref{sec:multiTAUB}, the $S^1$-fibers collapse as we approach the marked point $\epsilon y_1$, and the submanifold $L_{\rm cig} \subseteq N_1$ takes the form of a cigar as its geometry at infinity will look like $\mathbb{R} \times S^1 \subseteq \mathbb{R}^3 \times S^1$. We select from the hyperk\"ahler structure $\underline{\boldsymbol{\omega}}_\epsilon^{\rm tn}$ on $N_1$ the following Calabi-Yau structure:
\[
\omega^{\rm tn}_\epsilon = \epsilon dx_3 \wedge \theta + h^{\rm tn}_\epsilon dx_1 \wedge d x_2, \quad {\rm Im} \, \Omega^{\rm tn}_\epsilon = \epsilon dx_2 \wedge \theta + h^{\rm tn}_\epsilon dx_3 \wedge dx_1.
\]
Then the submanifold $L_{{\rm cig},\epsilon} \subseteq (N_i,\omega^{\rm tn}_\epsilon,\Omega^{\rm tn}_\epsilon)$ is a special Lagrangian submanifold since its tangent space (except at the tip) is spanned by $\partial_{x_1}$, $\partial_\psi$.


\subsubsection{Summary} In summary, we glue the two caps $L_{\rm cig} \subseteq N_i$ to the cylinder $L_{\rm cyl} \subseteq P$ and obtain a closed submanifold $L_\epsilon \subseteq M_\epsilon$; see Figure \ref{fig:approxsoln}. The gluing occurs in the region
\[
\{ R_0 \epsilon < \rho < \rho_0 \} \cap L_{\rm cyl} \leftrightarrow \{ R_0 \epsilon < \rho < \rho_0 \} \cap L_{\rm cig}.
\]
We select, from the reference triple $\underline{\boldsymbol{\omega}}_\epsilon$ on the closed four-manifold $(M_\epsilon,g_\epsilon)$, the following almost Calabi-Yau structure:
\begin{equation} \label{ref-cy}
\omega_\epsilon = \underline{\boldsymbol{\omega}}_\epsilon \cdot \boldsymbol{e}_3, \quad \Omega_\epsilon = (\underline{\boldsymbol{\omega}}_\epsilon \cdot \boldsymbol{e}_1 )+ i (\underline{\boldsymbol{\omega}}_\epsilon \cdot \boldsymbol{e}_2).
\end{equation}
We also define the tangent bundle endomorphism
\[
J_\epsilon = g_\epsilon^{-1} \omega_\epsilon.
\]
We say ``almost'' here since $J_\epsilon$ does not necessarily square to $-{\rm id}$ on the collar $\{ \epsilon^{2/5} \leq \rho \leq 2 \epsilon^{2/5} \}$. Outside this $\epsilon^{2/5}$-collar, then $J_\epsilon$ is a complex structure and the pair $(\omega,\Omega)$ does define a genuine Calabi-Yau structure; see the definition \eqref{bg-metric-regions} of $ \underline{\boldsymbol{\omega}}_\epsilon$. In fact
\[
(\omega_\epsilon,\Omega_\epsilon)|_{\{ \rho \geq 2 \epsilon^{2/5} \}} = (\omega_\epsilon^{\rm bulk},\Omega_\epsilon^{\rm bulk}), \quad (\omega_\epsilon,\Omega_\epsilon)|_{ \{ \rho \leq \epsilon^{2/5} \}} = (\omega_\epsilon^{\rm tn},\Omega_\epsilon^{\rm tn})
\]
and $L_\epsilon$ solves the special Lagrangian equations with respect to this global reference structure on its cylindrical part and on its cigar part. This reference structure $(\omega_\epsilon,\Omega_\epsilon)$ will next be perturbed to a genuine Calabi-Yau structure $(\tilde{\omega}_\epsilon,\tilde{\Omega}_\epsilon)$, and the task will be to deform $L_\epsilon$ to a genuine special Lagrangian submanifold.

\subsection{Realignment of the hyperk\"ahler structure} \label{sec:realign}

\subsubsection{Selecting the ambient Calabi-Yau structure}
Let 
\begin{equation} \label{genuine-ansatz2}
\underline{\boldsymbol{\tilde{\omega}}}_\epsilon = \underline{\boldsymbol{\omega}}_\epsilon + d \underline{\boldsymbol{a}}_\epsilon + B \cdot \underline{\boldsymbol{\omega}}_\epsilon
\end{equation}
be the genuine hyperk\"ahler triple on the $K3$ surface $M_\epsilon$ as described in \eqref{genuine-ansatz}. From the triple $\underline{\boldsymbol{\tilde{\omega}}}_\epsilon$, we must select a Calabi-Yau structure $(\tilde{\omega}_\epsilon,\tilde{\Omega}_\epsilon)$ about which to solve the special Lagrangian equation; this pair must satisfy
\begin{equation} \label{int-cond}
\int_{L_\epsilon} \tilde{\omega}_\epsilon = 0, \quad  {\rm Im} \, \int_{L_\epsilon} \tilde{\Omega}_\epsilon = 0,
\end{equation}
as this is the necessary topological condition to start deforming the approximate special Lagrangian submanifold $L_\epsilon$ to a genuine special Lagrangian submanifold $\tilde{L}_\epsilon$. 

To start, let $L_\epsilon \subseteq M_\epsilon$ be the approximate solution from Section \ref{sec:approxsoln} traversing two gluing regions. The direction selected in \eqref{ref-cy} from the reference structure $\underline{\boldsymbol{\omega}}_\epsilon$ satisfies
\[
\underline{\boldsymbol{\omega}}_\epsilon = (\omega_{\epsilon,1},\omega_{\epsilon,2},\omega_{\epsilon,3}) = ( {\rm Re} \, \Omega_\epsilon, {\rm Im} \, \Omega_\epsilon, \omega_\epsilon)
\]
and
\[
\int_{L_\epsilon} {\rm Re} \, \Omega_\epsilon = {\rm vol}(L_\epsilon), \quad  \int_{L_\epsilon} {\rm Im} \, \Omega_\epsilon = 0, \quad \int_{L_\epsilon} \omega_\epsilon =0, 
\]
since $L_\epsilon$ is special Lagrangian with respect to $(\omega_\epsilon^{\rm bulk},\Omega_\epsilon^{\rm bulk})$ and $ (\omega_\epsilon^{\rm tn},\Omega_\epsilon^{\rm tn})$, and the error term over the gluing region in \eqref{bg-metric-regions} is exact. Then the genuine hyperk\"ahler triple $\underline{\boldsymbol{\tilde{\omega}}}_\epsilon$ given in \eqref{genuine-ansatz2} integrates to
\[
\int_{L_\epsilon} \underline{\boldsymbol{\tilde{\omega}}}_\epsilon = \bigg[ {\rm vol}(L_\epsilon) \bigg] (1, 0,0)  + \bigg[ {\rm vol}(L_\epsilon) \bigg]  (B^1{}_1, B^1{}_2, B^1{}_3) .
\]
We look for a unit vector $\boldsymbol{v}$ such that
\[
\int_{L_\epsilon} \underline{\boldsymbol{\tilde{\omega}}}_\epsilon \cdot \boldsymbol{v} =0.
\]
We will explain in a moment why this gives a solution to \eqref{int-cond}, but for now we solve the equation
\begin{align}
&   (1+B^1{}_1) v_1 + B^1{}_2 v_2 + B^1{}_3 v_3 = 0 \\
  & \ v_1^2 +v_2^2+v_3^2 = 1
\end{align}
for given $B$ small; recall from \eqref{B-est} that $|B| \leq C \epsilon^{\frac{17-4\delta}{10}}$. This is to find $\boldsymbol{v} \in S^2$ such that
\begin{equation} \label{integral-cond}
\langle \boldsymbol{v}, \begin{bmatrix} 1+ B^1{}_1 \\ B^1{}_2 \\ B^1{}_3 \end{bmatrix} \rangle = 0.
\end{equation}
There is an $S^1$-family of solutions $\boldsymbol{v}$. Choose a solution $\boldsymbol{v}$ nearby $\boldsymbol{e}_3$. Choose another solution $\boldsymbol{v}^\perp$ to \eqref{integral-cond} which is perpendicular to $\boldsymbol{v}$ and nearby $\boldsymbol{e}_2$. Complete this set to an orthogonal basis $\{ \boldsymbol{v}, \boldsymbol{v}^\perp, \boldsymbol{u} \}$ of unit vectors in $\mathbb{R}^3$ with $\boldsymbol{u}$ nearby $\boldsymbol{e}_1$.
\begin{equation} \label{yoyo}
| (\boldsymbol{u}, \boldsymbol{v}^\perp,\boldsymbol{v}) -  (\boldsymbol{e}_1, \boldsymbol{e}_2,\boldsymbol{e}_3) | \leq C |B| \leq C \epsilon^{\frac{17-4\delta}{10}}. 
\end{equation}
We can then take
\begin{equation} \label{genuine-cy}
\tilde{\omega}_\epsilon = \underline{\boldsymbol{\tilde{\omega}}}_\epsilon \cdot \boldsymbol{v}, \quad \tilde{\Omega}_\epsilon = (\underline{\boldsymbol{\tilde{\omega}}}_\epsilon \cdot \boldsymbol{u} )+ i (\underline{\boldsymbol{\tilde{\omega}}}_\epsilon \cdot \boldsymbol{v}^\perp),
\end{equation}
to solve \eqref{int-cond}.

\subsubsection{Ambient error estimates}
With this choice, the approximate Calabi-Yau structure $(\omega_\epsilon,\Omega_\epsilon)$ and the genuine Calabi-Yau structure $(\tilde{\omega}_\epsilon,\tilde{\Omega}_\epsilon)$ differ by
\[
|\tilde{\omega}_\epsilon - \omega_\epsilon|_{g_\epsilon} \leq |\underline{\boldsymbol{\tilde{\omega}}}_\epsilon \cdot \boldsymbol{v} -\underline{\boldsymbol{\omega}}_\epsilon\cdot \boldsymbol{v} |_{g_\epsilon} +  |\underline{\boldsymbol{\omega}}_\epsilon \cdot \boldsymbol{v} -\underline{\boldsymbol{\omega}}_\epsilon\cdot \boldsymbol{e}_3 |_{g_\epsilon}
\]
and so, by \eqref{genuine-est3} and \eqref{yoyo}, after keeping the worst power of $\epsilon$ we derive
\begin{equation} \label{tildeCY-nontilde2}
|\tilde{\omega}_\epsilon - \omega_\epsilon |_{g_\epsilon}  + \rho |\nabla(\tilde{\omega}_\epsilon - \omega_\epsilon) |_{g_\epsilon} + \rho^2 |\nabla^2(\tilde{\omega}_\epsilon - \omega_\epsilon) |_{g_\epsilon} \leq C \epsilon^{1+\frac{1}{6}}.
\end{equation}
Similar estimates hold for $\Omega$, since $\boldsymbol{u}$ is near $\boldsymbol{e}_1$ and $\boldsymbol{v}^\perp$ is near $\boldsymbol{e}_2$.

\subsubsection{Error estimates along $L_\epsilon$}
When restricted to $L_\epsilon$, the symplectic form $\tilde{\omega}_\epsilon$ becomes
\[
\tilde{\omega}_\epsilon|_{L_\epsilon} = (\underline{\boldsymbol{\tilde{\omega}}}_\epsilon \cdot \boldsymbol{v})|_{L_\epsilon} = (d \underline{\boldsymbol{a}}_\epsilon  \cdot \boldsymbol{v})|_{L_\epsilon} + (\omega_{\epsilon,k} v^k + B^i{}_k \omega_{\epsilon,i} v^k)|_{L_\epsilon}.
  \]
  We start from the bulk region where $L_\epsilon$ solves the special Lagrangian equations with respect to $(\omega_\epsilon,\Omega_\epsilon)$ by \eqref{bulk-slag}.
  \[
\underline{\boldsymbol{\omega}}_\epsilon|_{L_\epsilon}  = (d {\rm vol},0,0), \quad {\rm on } \, \{ \rho \geq 2 \epsilon^{2/5} \}.
  \]
Therefore on the bulk region there holds
\[
\tilde{\omega}_\epsilon|_{L_\epsilon} = (d \underline{\boldsymbol{a}}_\epsilon  \cdot \boldsymbol{v})|_{L_\epsilon} + ( v^1 + B^1{}_k v^k) d {\rm vol}_{L_\epsilon} =  (d \underline{\boldsymbol{a}}_\epsilon  \cdot \boldsymbol{v})|_{L_\epsilon} 
  \]  
  by \eqref{integral-cond}. Hence
  \begin{equation} \label{tildeCY-nontilde-bulk}
\sup_{ \{ \rho \geq 2 \epsilon^{2/5} \}} \rho |(\tilde{\omega}_\epsilon-\omega_\epsilon)|_{L_\epsilon}|_{g_\epsilon} \leq \rho | d \underline{\boldsymbol{a}}_\epsilon |_{g_\epsilon} \leq C  \epsilon^{2+\frac{1}{6}} 
  \end{equation}
by \eqref{da-est} and the fact that $\omega_\epsilon|_{L_\epsilon}=0$ on this region. Next, applying \eqref{genuine-est2} gives
\[
  \sup_{ \{ \rho \leq 2 \epsilon^{2/5} \}} \rho |(\tilde{\omega}_\epsilon-\omega_\epsilon)|_{L_\epsilon}|_{g_\epsilon} \leq C  \epsilon^{2+\frac{1}{6}} (1+ \epsilon^{-\frac{1}{2}} \epsilon^{\frac{2}{5}}) = C\epsilon^{2+ \frac{1}{15}}.
\]
We can also estimate the derivative of $\tilde{\omega}_\epsilon-\omega_\epsilon$ in a similar way, and after combining both regions the result is
\begin{equation} \label{tildeCY-nontilde}
\rho \left|(\tilde{\omega}_\epsilon-\omega_\epsilon)|_{L_\epsilon} \right|_{g_\epsilon}  + \rho^2 \left| \nabla^{g_\epsilon|_{L_\epsilon}} (\tilde{\omega}_\epsilon-\omega_\epsilon)|_{L_\epsilon} \right|_{g_\epsilon}  \leq C\epsilon^{2+ \frac{1}{15}}.
\end{equation}
When bounding the derivative, we applied the submanifold estimate
\[
\left| \nabla^{g_\epsilon|_{L_\epsilon}}  (\tilde{\omega}_\epsilon-\omega_\epsilon)|_{L_\epsilon} \right|_{g_\epsilon} \leq C | \nabla^{g_\epsilon} (\tilde{\omega}_\epsilon-\omega_\epsilon) |_{g_\epsilon} + C |A|_{g_\epsilon} | (\tilde{\omega}_\epsilon-\omega_\epsilon)|_{L_\epsilon}|_{g_\epsilon},
\]
together with \eqref{genuine-est0} and the bounds \eqref{secondfundform} on the second fundamental form $A$ which are to be derived in the next section.

\subsection{Second fundamental form}
We will need estimates on the second fundamental form of $L_\epsilon \subseteq (M_\epsilon ,g_\epsilon)$.
\[
A(X,Y) = (\nabla_X Y)^\perp, \quad A \in \Gamma(L,T^*L^{\otimes 2} \otimes NL).
\]
We obtain estimates on the cylindrical region $L_{\rm cyl}$, the cigar region $L_{\rm cig}$ and the overlap gluing region.

\subsubsection{Cylindrical region} On the cylindrical part $L_{\rm cyl} \subseteq P$, namely where $\rho \geq 2 \epsilon^{2/5}$, the metric on $M_\epsilon$ can locally be written as
\[
g_\epsilon =\sum_{k=1}^3 h_\epsilon dx_k \otimes d x_k + \epsilon^2 h_\epsilon^{-1} \theta \otimes \theta, \quad \theta= d \psi - i \pi^* \mathcal{A}.
\]
Choose local coordinates $(x_1,x_2,x_3,\psi)$ near a point such that $\mathcal{A}(p)=0$ and
\[
 T_pL =  {\rm span} \bigg\{ \frac{\partial}{\partial x_1}, \frac{\partial}{\partial \psi} \bigg\}, \quad N_pL = {\rm span} \bigg\{ \frac{\partial}{\partial x_2}, \frac{\partial}{\partial x_3} \bigg\}.
\]
We compute
\begin{align*}
A_{x_1 x_1}|_p &= \Gamma_{x_1 x_1}^{x_2} \partial_{x_2} + \Gamma_{x_1 x_1}^{x_3} \partial_{x_3} = - \frac{1}{2} \bigg[ \partial_{x_2} \log h_\epsilon \, \partial_{x_2} + \partial_{x_3} \log h_\epsilon \, \partial_{x_3} \bigg] ,\\
A_{\psi \psi}|_p &= \Gamma_{\psi \psi}^{x_2} \partial_{x_2} + \Gamma_{\psi \psi}^{x_3} \partial_{x_3} = \frac{\epsilon^2}{2} h^{-2} \bigg[ \partial_{x_2} \log h_\epsilon \, \partial_{x_2} +  \partial_{x_3} \log h_\epsilon \, \partial_{x_3} \bigg],\\
A_{\psi x_1}|_p & = \Gamma_{\psi x_1}^{x_2} \partial_{x_2} + \Gamma_{\psi x_1}^{x_3} \partial_{x_3} = \frac{ \epsilon^2}{2} h^{-2} \bigg[ -iF_{12} \partial_{x_2} -i F_{13} \partial_{x_3} \bigg],
\end{align*}
with $-iF=\star_{g_T} dh$. The norm can be bounded by
\begin{equation} \label{secondfundcyl}
|A|^2_g \leq C h^{-3} |\partial h|^2 .
\end{equation}
Applying estimates on $h$ \eqref{h-estimates} implies
\[
|A|_{g_\epsilon} \leq C \epsilon^{1/5} \quad \text{on }  \{ \rho \geq 2 \epsilon^{2/5} \}.
\]
\subsubsection{Cigar region} This is the region where $g_\epsilon = g_\epsilon^{\rm tn}$ is given by rescalings of the Taub-NUT metric. On $\{ \rho \leq \epsilon R_0 \}$, then $\epsilon^{-2} g_\epsilon = S_{\epsilon^{-1}}^* g^{\rm tn}$. We obtain
\[
\sup_{ \{ \rho \leq R_0 \}} |A_{\rm cig}|_{g^{\rm tn}} \leq C \implies \sup_{ \{ \rho \leq \epsilon R_0 \}} \rho |A|_{g_\epsilon} \leq C
\]
by pulling back a compactness estimate on the bounded region on the cigar. Next, on $\{ \epsilon R_0 \leq \rho \leq \epsilon^{2/5} \}$, the same calculation as \eqref{secondfundcyl} leads to
\[
|A|^2_{g_\epsilon} \leq C (h_\epsilon^{\rm tn})^{-3} |\partial h_\epsilon^{\rm tn}|^2.
\]
Applying \eqref{bdd-htn}, we obtain
\[
|A|_{g_\epsilon} \leq C \epsilon \rho^{-2}, \quad \text{on }  \{ \epsilon R_0 \leq \rho \leq \epsilon^{2/5} \}.
\]

\subsubsection{Overlap region} It remains to control the region $\{ \epsilon^{2/5} \leq \rho \leq 2 \epsilon^{2/5} \}$. The error terms in the metric coming from $d( \zeta_\epsilon u_\epsilon)$ are so small by \eqref{glue-collar} that the previous bound with respect to the $g^{\rm tn}_\epsilon$ metric still holds:
\[
|A|_{g_\epsilon} \leq C \epsilon \rho^{-2}, \quad \text{on }  \{ \epsilon^{2/5} \leq \rho \leq 2 \epsilon^{2/5} \}.
\]

\subsubsection{Summary}
Altogether
\begin{equation}\label{secondfundform}
|A|_{g_\epsilon} \leq
\begin{cases}
C \epsilon^{1/5}, & \text{if } \rho \geq 2 \epsilon^{2/5},\\
C \epsilon \rho^{-2},     & \text{if } \rho \leq 2 \epsilon^{2/5}.
\end{cases}
\end{equation}
Here we used $C^{-1} \leq (\epsilon \rho^{-1})$ on $\{ \rho \leq \epsilon R_0 \}$ to unify the statement of the estimate on the cigar region with the overlap region. As an application, we can restrict the cylindrical estimates \eqref{bulk2cyl} to the submanifold $L_\epsilon$. Let $g_{L_\epsilon}$ and $g^{\rm cyl}_{L_\epsilon}$ be the restrictions of $g_\epsilon$ and $g^{\rm cyl}_\epsilon$ to $L_\epsilon$. Using
\[
 |\nabla^{\rm cyl}_{L_\epsilon} g_{L_\epsilon} |_{g^{\rm cyl}_{L_\epsilon}} \leq  |\nabla^{\rm cyl}_\epsilon g_\epsilon |_{g^{\rm cyl}_\epsilon}  + C |g_\epsilon|_{g^{\rm cyl}_\epsilon} |A|_{g^{\rm cyl}_\epsilon},
\]
we obtain
\begin{equation} \label{bulk2cyl-L}
\begin{aligned} 
\sup_{\{ \rho \geq 2 \epsilon^{2/5} \}} \left| g_{L_\epsilon} - g^{\rm cyl}_{L_\epsilon} \right|_{g_{L_\epsilon}^{\rm cyl}} &\leq C \epsilon^{\frac{3}{5}}, \\
  \sup_{\{ \rho \geq 2 \epsilon^{2/5} \}} \left| \nabla_{g_{L_\epsilon}^{\rm cyl}} \, g_{L_\epsilon} \right|_{g_{L_\epsilon}^{\rm cyl}} &\leq C \epsilon^{\frac{1}{5}}. 
\end{aligned}
\end{equation}
Similarly, estimate \eqref{geps2cyl} restricted to $L_\epsilon$ becomes
\begin{equation} \label{geps2cyl-L}
  \begin{aligned}
\sup_{\{ \rho \geq \epsilon R_0 \}}  \left| g_{L_\epsilon} - g_{L_\epsilon}^{\rm cyl} \right|_{g_{L_\epsilon}^{\rm cyl}} &\leq \frac{1}{2},\\
  \sup_{\{ \rho \geq \epsilon R_0 \}}   \rho \left| \nabla_{g_{L_\epsilon}^{\rm cyl}} \, g_{L_\epsilon}\right|_{g_{L_\epsilon}^{\rm cyl}} &\leq C.
    \end{aligned}
\end{equation}
Finally, we record an estimate on $A$ akin to \eqref{long-region-cyl}. For $0<\tau<1$, there holds
\begin{equation} \label{long-region-A}
\sup_{\{ \epsilon R^{(1+\tau)/2} \le \rho \le \rho_0 \}}
    |A|_{g_\epsilon}
    \le C \bigl(\epsilon^{\frac{1}{5}} + \epsilon^{-1} R^{-(1+\tau)}\bigr).
\end{equation}

\subsection{Fixed point theorem} \label{sec:fixedpoint}
Let $f_0: S^2 \rightarrow M_\epsilon$ be a smooth map which parametrizes the submanifold $L_\epsilon \subseteq (M_\epsilon, g_\epsilon)$ serving as approximate solution. Given any $a \in \Omega^1(S^2)$, deform $f$ by
\[
  f_a: S^2 \rightarrow M_\epsilon, \quad f_a = \exp_{f_0} \xi
\]
where $\xi = J_\epsilon a^\sharp$ and the index of $a$ is raised with respect to the induced metric on $L_\epsilon$ from $(M_\epsilon, g_\epsilon)$. Let
\begin{equation} \label{F-defn}
\mathcal{F}(a) = - f_a^* \tilde{\omega}_\epsilon - \star f_a^* {\rm Im} \, \tilde{\Omega}_\epsilon,
\end{equation}
so that a special Lagrangian deformation of $L_\epsilon$ solves $\mathcal{F}(a)=0$. The Hodge star on $S^2$ is with respect to the metric induced on $L_\epsilon \subseteq (M_\epsilon,g_\epsilon)$, i.e. $f_0^* g_\epsilon$. First, we notice that the image of $\mathcal{F}$ is in the image of $d+d^\dagger$, where $d^\dagger$ is with respect to the metric induced on $L_\epsilon \subseteq (M_\epsilon,g_\epsilon)$. Indeed,
\begin{equation} \label{F-image}
\mathcal{F}: \Lambda^1(S^2) \rightarrow (d+d^\dagger)\Lambda^1(S^2),
\end{equation}
is equivalent to
\[
\int_{S^2} f_a^* \tilde{\omega}_\epsilon = 0, \quad \int_{S^2} f_a^* {\rm Im} \, \tilde{\Omega}_\epsilon = 0,
\]
and this follows from \eqref{int-cond}, since $f_a$ and $f_0$ are homotopic and hence $[f_a^* \tau] = [f_0^* \tau]$ for any closed $k$-form $\tau$.

Next, we set up the contraction mapping problem. By \eqref{F-image}, we may define the quadratic operator
\begin{equation} \label{Q-defn}
\begin{aligned}
  &\ \mathcal{Q}: \Lambda^1(S^2) \rightarrow  (d+d^\dagger)\Lambda^1(S^2) \\
  &\ \mathcal{Q}(a) = \mathcal{F}(a) - \mathcal{F}(0) - (d+d^\dagger) a 
\end{aligned}
\end{equation}
where $d^\dagger$ is with respect to the metric induced by the reference geometry on $L_\epsilon \subseteq (M_\epsilon, g_\epsilon)$, so that
\[
\mathcal{F}(a) = \mathcal{F}(0) + (d+d^\dagger)a + \mathcal{Q}(a).
\]
We also define
\begin{align*}
  &\ \mathcal{N}: \Lambda^1(S^2) \rightarrow \Lambda^1(S^2) \\
  &\ \mathcal{N}(a) = - (d+d^\dagger)^{-1} (\mathcal{F}(0) + \mathcal{Q}(a)).
\end{align*}
We note that due to $H^1(S^2)=0$, the operation
\[
  (d+d^\dagger)^{-1} : (d+d^\dagger) \Lambda^{1}(S^2) \rightarrow \Lambda^{1}(S^2)
\]
is well-defined. We take the fixed point strategy for constructing solutions to $\mathcal{F}(a)=0$:
\[
\mathcal{N}(a) = a \quad \Rightarrow \quad \mathcal{F}(a) = 0.
\]
To let these operators act on Banach spaces, we introduce weighted $C^k$-norms
\[
\| a \|_{C^k_\beta(L_\epsilon)} = \sum_{i=0}^k \sup_L \rho^{-\beta} \rho^i |\nabla^i a|_{g_\epsilon|_{L_\epsilon}}, \quad a \in \Omega^p(S^2)
\]
and the semi-norm
\[
[ a ]_{C^{0,\alpha}_\beta} = \sup_{ \{ d_\epsilon(x,y)< {\rm inj}_{g_\epsilon} \}} \bigg[ {\rm min} (\rho(x), \rho(y))^{-\beta} \frac{|a(x)-a(y)|_{g_\epsilon}}{d_\epsilon(x,y)^\alpha} \bigg],
\]
where $a(x)-a(y)$ is understood by parallel transport along a geodesic connecting the points. The weighted H\"older norms are then
\[
\| a \|_{C^{k,\alpha}_\beta} = \| a \|_{C^k_\beta} + [ \nabla^k h ]_{C^{0,\alpha}_{\beta-k-\alpha}}.
\]
We take our Banach space to be
\[
\mathcal{B} = \bigg( \Lambda^{1}(S^2), C^{1,\alpha}_\beta \bigg), \quad \mathcal{C} = \bigg( (d+d^\dagger)\Lambda^{1}(S^2), C^{0,\alpha}_{\beta-1} \bigg).
\]
The weight $\beta$ will taken later to be a small negative number.

To prove $\mathcal{N}$ is a contraction mapping, we establish the existence of constants $C >0$, $\kappa>0$ and $\iota \geq \frac{1}{2}+ \frac{1}{50}$ independent of $\epsilon>0$ such that:
\begin{enumerate}
\item There is a degenerate bound for the inverse: $\| (d+d^\dagger)^{-1} \|_{\mathcal{C},\mathcal{B}} \leq C \epsilon^{-\frac{1}{2} - \frac{1}{100}}$.
\item There is a quadratic estimate
  \[
    \| Q(u)-Q(v) \|_{\mathcal{C}} \leq  \epsilon^\iota \| u-v \|_{\mathcal{B}}
  \]
  for all $u,v \in \mathcal{U}$, where
  \[
\mathcal{U} = \{ a \in \mathcal{B}: \| a \|_{\mathcal{B}}  < \epsilon^\kappa \}.
\]
\item The errors in the approximate solution are small: $\| \mathcal{F}(0) \|_{\mathcal{C}} \leq \epsilon^{\kappa+\iota}$.
\end{enumerate}

The triangle inequality
\[
\| \mathcal{N}(u) \|_{\mathcal{B}} \leq \| \mathcal{N}(u) - \mathcal{N}(0) \|_{\mathcal{B}}  + \| \mathcal{N}(0) \|_{\mathcal{B}} 
\]
combined with (1), (2), (3) implies that
\[
\| \mathcal{N}(u) \|_{\mathcal{B}} \leq C \epsilon^{-\frac{1}{2} - \frac{1}{100}} \epsilon^\iota \epsilon^\kappa, \quad u \in \mathcal{U}.
\]
This is where the condition $\iota \geq \frac{1}{2}+ \frac{1}{50}$ enters the argument: it is to absorb the $\epsilon^{-\frac{1}{2} - \frac{1}{100}}$ coming from the inverse of the linearized operator. Thus
\[
\mathcal{N}: \mathcal{U} \rightarrow \mathcal{U}
\]
and similarly we can show that $\mathcal{N}$ is a contraction mapping once $\epsilon$ is small enough. By the Banach fixed point theorem, there exists a unique fixed point $a \in \mathcal{U}$, namely a solution to $\mathcal{N}(a)=a$. Thus have a found a 1-form solving $\mathcal{F}(a)=0$ and deformed $L_\epsilon$ into a special Lagrangian submanifold. More precisely, the fixed point argument produces a $C^{1,\alpha}$ regular submanifold, but this must be a smooth submanifold by a standard argument using elliptic regularity (alternatively, since the submanifold is minimal, it is smooth by Morrey's regularity theorem \cite{Morrey}).

Thus it remains to prove (1), (2) and (3). These will be proved in Section \ref{sec:quad-est}. 

\subsection{Simplified model I: spheres inside a bubble region}
Let us set aside for the moment the 2-sphere $L_\epsilon$ connecting two points $p_i$. In this subsection, we stay inside a Taub-NUT bubble and show that 2-spheres connecting a pair of monopole points in the bubble can be deformed into a special Lagrangian submanifold on the ambient $K3$. This is essentially already known and follows from the local model geometry \cite{LotayOli}, Foscolo's estimates \cite{FoscoloK3}, and a straight-forward perturbation argument. We give two arguments for this perturbation argument.

\begin{figure} 
  \centering
\begin{tikzpicture}[scale=2]
  \def\radius{0.05}

  \fill (0,0) circle (\radius);
  \fill (1,0) circle (\radius);
  \fill (0,1) circle (\radius);

  \node[below left] at (0,0) {\(y_1\)};
  \node[below right] at (1,0) {\(y_2\)};
  \node[above left] at (0,1) {\(y_3\)};

  \draw (0,0) -- (1,0) node[midway, below] {\(S^2\)};
  \draw (0,0) -- (0,1) node[midway, left] {\(S^2\)};
  \draw (1,0) -- (0,1) node[midway, above right] {\(S^2\)};

\end{tikzpicture}
\caption{A chain of 2-spheres in Taub-NUT space $N$.} 
\label{fig-2chainz}
\end{figure}

\subsubsection{Minimal surface}

Take a 2-sphere connecting two points where the circle action collapses in multi-Taub-NUT bubble; for this argument we can place the points $y_a$ within the bubble in any configuration such that $\sum y_a = 0$, see e.g. Figure \ref{fig-2chainz}. Inside the bubble, Foscolo's $K3$ metric $\tilde{g}$ becomes after rescaling a small perturbation of the multi-Taub-NUT metric $g^{\rm tn}$. By \cite{Trinca22}, such spheres are the unique area (with respect to $g^{\rm tn}$) minimisers representing that homology class. Therefore this minimal submanifold does not have nullity, and we may apply White's deformation theorem \cite{White} for $\epsilon>0$ small enough to obtain a minimal sphere (with respect to $\tilde{g}$) representing that same homology class. By Webster's formula (see e.g. \cite{FoscoloTrinca}) this minimal sphere must have Gauss map of degree zero and hence be holomorphic for some holomorphic structure.

\subsubsection{McLean's theorem}
The above argument does not keep track of which ambient Calabi-Yau structure makes the 2-sphere special Lagrangian, and for our application the ambient Calabi-Yau structure is fixed to $(\tilde{\omega}_\epsilon,\tilde{\Omega}_\epsilon)$. We revisit the argument and remove the freedom of arbitrary hyperk\"ahler rotation. Recall that the point $p_1 \in M_\epsilon^*$ was replaced by a multi Taub-NUT space $N_\epsilon \overset{\pi}{\rightarrow} U_\epsilon$ with $U = \mathbb{R}^3 \backslash \{ \epsilon y_k \}$, and the points $\{y_k \}$ are all taken to be along the $x$-axis. Choose two neighboring points denoted $\epsilon y_1$ and $\epsilon y_2$ and let $S_\epsilon$ be the 2-sphere lifting the straight line in direction $e_1$ between them. We wish to find a nearby 2-sphere which is special Lagrangian.

In Section \ref{sec:realign} we fixed a Calabi-Yau structure $ (\tilde{\omega}_\epsilon,  \tilde{\Omega}_\epsilon)$ and this pair solves
\[
\int_{S_\epsilon} \tilde{\omega}_\epsilon = 0, \quad \int_{S_\epsilon} {\rm Im} \, \tilde{\Omega}_\epsilon = 0.
\]
In the bubble $\{ \rho \leq R_0 \epsilon \}$, we have the identities $\omega_\epsilon = \epsilon^2 S_{\epsilon^{-1}}^* \underline{\boldsymbol{\omega}}^{\rm tn}  \cdot e_3$, $\rho \equiv R_0 \epsilon$, $g_\epsilon = \epsilon^2 S_{\epsilon^{-1}}^* g^{\rm tn}$. Pullback by $S_\epsilon$ to work on the multi Taub-NUT space $N \overset{\pi}{\rightarrow} U$ with $U = \mathbb{R}^3 \backslash \{ y_k \}$, where estimate \eqref{tildeCY-nontilde2} becomes
\[
| \bar{\omega}_\epsilon - \omega|_{g}  +  |\nabla( \bar{\omega}_\epsilon - \omega) |_{g} +  |\nabla^2( \bar{\omega}_\epsilon - \omega) |_{g} \leq C \epsilon^{1+\frac{1}{6}},
\]
where we write $g=g^{\rm tn}$ and $\omega = \underline{\boldsymbol{\omega}}^{\rm tn} \cdot e_3$ and $\bar{\omega}_\epsilon = \epsilon^{-2} S_\epsilon^* \tilde{\omega}_\epsilon $. Similarly, we write $\Omega = (\underline{\boldsymbol{\omega}}^{\rm tn} \cdot e_1) + i (\underline{\boldsymbol{\omega}}^{\rm tn} \cdot e_2)$ and $\bar{\Omega}_\epsilon = \epsilon^{-2} S_\epsilon^* \tilde{\Omega}_\epsilon$. In sum, we have two nearby Calabi-Yau structures $(\omega,\Omega)$ and $(\bar{\omega}_\epsilon,\bar{\Omega}_\epsilon)$ and
\[
\omega|_{S} = 0, \quad {\rm Im} \, \Omega|_S=0
  \]
where $S \subseteq N$ lifts the straight line from $y_1$ to $y_2$. It follows from McLean's deformation theorem \cite{JoyceSurvey, Marshall} that $S$ can be deformed to a nearby there exists a special Lagrangian 2-sphere with respect to $(\bar{\omega}_\epsilon, \bar{\Omega}_\epsilon)$.

\section{Simplified model II: two-torus in the bulk} \label{sec:2torus}
We will resume in Section \ref{sec:quad-est} the proof of Theorem \ref{mainthm} on elongated spheres joining a pair of bubbles. Before that, in this section, we prove Theorem \ref{mainthm2} on thin tori avoiding the bubble regions. The proof is similar in form but the analysis is simpler compared to the estimates in Section \ref{sec:quad-est}.

Let $B = \mathbb{T}^3 / \mathbb{Z}_2$ be given datum $\mathcal{D}$ as before. Let $L_0 \subseteq B$ be an embedded straight line which avoids the involution fixed points $q_j$ and also avoids the marked points $p_i \in B_{\rm reg}$. In this section, we discuss how $L_0$ can be lifted to a sequence of special Lagrangian 2-tori $\Sigma_\epsilon$ on a family of $K3$ surfaces $M_\epsilon$ collapsing to $B$.

\[
\begin{tikzcd}[column sep=large, row sep=large]
\Sigma_\epsilon  
  \arrow[r, "\epsilon \to 0"] 
  \arrow[d, hook] 
& 
L_0 
  \arrow[d, hook] \\
M_\epsilon 
  \arrow[r, "\epsilon \to 0"] 
& 
B
\end{tikzcd}
\]


\subsection{The approximate solution} Take $\epsilon > 0$ small enough such that we may remove $\epsilon$-neighborhoods around the $q_j, p_i \in B$ without removing any part of $L_0$. Build the circle bundle $P \rightarrow \mathbb{T}^*$ as in Section \ref{sec:foscolosummarized} and lift $L_0$ to two disjoint embedded tori
\[
\tilde{\Sigma}_{\epsilon} = \pi^{-1}(L_0) \subseteq P.
\]
Take the quotient and compactify $P$ to $M_\epsilon$ by Foscolo's construction (see Section \ref{sec:foscolosummarized} for a summary) and obtain a 2-torus $\Sigma_{\epsilon} \subseteq M_\epsilon$. Rotate coordinates such that the affine line points in direction $e_1$, and select the bulk Calabi-Yau structure
\begin{align*}
\omega_\epsilon^{\rm bulk} 
  &= \epsilon\, dx_3 \wedge \theta + h_\epsilon\, dx_1 \wedge dx_2, 
  & \omega_\epsilon^{\rm bulk}\big|_{\Sigma_\epsilon} &= 0, \\
\operatorname{Im}\,\Omega_\epsilon^{\rm bulk} 
  &= \epsilon\, dx_2 \wedge \theta + h_\epsilon\, dx_3 \wedge dx_1, 
  & \operatorname{Im}\,\Omega_\epsilon^{\rm bulk}\big|_{\Sigma_\epsilon} &= 0.
\end{align*}
The local model $(\omega_\epsilon^{\rm bulk}, \Omega_\epsilon^{\rm bulk})$ is only defined on $M_\epsilon^*$. We next select the global compact Calabi-Yau structure $(M_\epsilon, \tilde{\omega}_\epsilon, \tilde{\Omega}_\epsilon)$ and use it to solve the special Lagrangian equation for $\Sigma_\epsilon$.

\subsection{The global Calabi-Yau structure} 
The genuine hyperk\"ahler structure on $M_\epsilon$ is of the form
\[
\underline{\boldsymbol{\tilde{\omega}}}_\epsilon = \underline{\boldsymbol{\omega}}_\epsilon + d \underline{\boldsymbol{a}}_\epsilon + B \cdot \underline{\boldsymbol{\omega}}_\epsilon.
\]
As explained in Section \ref{sec:realign}, it is possible to select a basis of unit vector $\{\boldsymbol{u}, \boldsymbol{v}^\perp, \boldsymbol{v} \}$ nearby $(e_1,e_2,e_3)$ such that
\[
\tilde{\omega}_\epsilon = \underline{\boldsymbol{\tilde{\omega}}}_\epsilon \cdot \boldsymbol{v}, \quad \tilde{\Omega}_\epsilon = \underline{\boldsymbol{\tilde{\omega}}}_\epsilon \cdot \boldsymbol{u} + i (\underline{\boldsymbol{\tilde{\omega}}}_\epsilon \cdot \boldsymbol{v}^\perp)
\]
is a Calabi-Yau structure on $M_\epsilon$ aligned to solve
\begin{equation} \label{torus-alignment}
\int_{\Sigma_\epsilon} \tilde{\omega}_\epsilon = 0, \quad \int_{\Sigma_\epsilon} {\rm Im} \, \tilde{\Omega}_\epsilon =0.
\end{equation}
The embedded torus $\Sigma_\epsilon \subseteq (M_\epsilon, \tilde{\omega}_\epsilon, \tilde{\Omega}_\epsilon)$ is to be deformed to solve the special Lagrangian equations.

\subsection{The fixed point argument} 
Let $f_0: T^2 \rightarrow M_\epsilon$ be a smooth map parametrizing $\Sigma_\epsilon \subseteq M_\epsilon$. We deform $f_0$ by the formula
\[
f_a: T^2 \rightarrow M_\epsilon, \quad f_a = \exp_{f_0} J a^\sharp, \quad a \in \Omega^1(\Sigma_\epsilon)
\]
and look for solutions to $\mathcal{F}(a)=0$ with
\[
\mathcal{F}(a) = - f_a^* \tilde{\omega}_\epsilon - \star f_a^* {\rm Im} \, \tilde{\Omega}_\epsilon.
\]
We setup the domain and codomain as
\[
\mathcal{F}: C^{1,\alpha}(\mathcal{H}^1{}^\perp) \rightarrow C^{0,\alpha}( (d+d^\dagger)\Lambda^1),
\]
where $\mathcal{H}^1{}^\perp$ denotes 1-forms $L^2$-orthogonal to harmonic forms, and H\"older norms and $d^\dagger$ are with respect to $g_\epsilon|_{\Sigma_\epsilon}$, which is
\[
g_\epsilon|_{\Sigma_\epsilon} = h_\epsilon(x) dx \otimes dx + \epsilon^2 h_\epsilon(x)^{-1} \theta \otimes \theta,
\]
so that the harmonic forms are
\[
\mathcal{H}^1 = {\rm span} \{ h_\epsilon(x) dx, \theta \}.
  \]
The image of $\mathcal{F}$ is indeed in $(d+d^\dagger)\Lambda^1$ since
\[
\int_{T^2}  f_a^* \tilde{\omega}_\epsilon =0, \quad \int_{T^2} f_a^* {\rm Im} \, \tilde{\Omega}_\epsilon =0
\]
by \eqref{torus-alignment}. Introduce as before the quadratic operator $\mathcal{Q}$ and contraction operator $\mathcal{N}$.
\[
\mathcal{F}(a) = \mathcal{F}(0) + (d+d^\dagger)a + \mathcal{Q}(a), \quad \mathcal{N}(a) = (d+d^\dagger)^{-1}( \mathcal{F}(0) + \mathcal{Q}(a)).
\]
Here $(d+d^\dagger)^{-1}(\psi)$ is defined to be the unique 1-form $a \in \mathcal{H}^1{}^\perp$ such that $(d+d^\dagger)a = \psi$. The domain and codomain of the contraction operator is
\[
\mathcal{N}: C^{1,\alpha}(\mathcal{H}^1{}^\perp) \rightarrow C^{1,\alpha}(\mathcal{H}^1{}^\perp),
\]
and we aim to solve $\mathcal{N}(a)=a$ by the Banach fixed point theorem. For this, we need to estimate $\| \mathcal{F}(0) \|$, $\| (d+d^\dagger)^{-1} \|$, and $\| \mathcal{Q}(u)-\mathcal{Q}(v) \|$.

\subsection{Estimating the approximate solution} \label{sec:2torusapprox} We estimate
\[
\| \mathcal{F}(0) \|_{C^{0,\alpha}} \leq | (\tilde{\omega}_\epsilon |_{\Sigma_\epsilon}) | +  | \nabla^{\Sigma_\epsilon} (\tilde{\omega}_\epsilon |_{\Sigma_\epsilon}) | + | ({\rm Im} \, \tilde{\Omega}_\epsilon |_{\Sigma_\epsilon}) | +  | \nabla^{\Sigma_\epsilon} {\rm Im} \, (\tilde{\Omega}_\epsilon |_{\Sigma_\epsilon}) |.
\]
Since the reference structure $(\omega_\epsilon, {\rm Im} \, \Omega_\epsilon)$ vanishes on $\Sigma_\epsilon$, this amounts to estimating the error in going from $\tilde{\omega}_\epsilon$ to $\omega_\epsilon$. By \eqref{tildeCY-nontilde}
\[
\| \mathcal{F}(0) \|_{C^{0,\alpha}} \leq \epsilon^2,
\]
and this gives the smallness of the approximate solution. We note that \eqref{tildeCY-nontilde} is only stated for $\tilde{\omega}_\epsilon$ but the same estimate (and proof) holds for $\tilde{\Omega}_\epsilon$, and estimate \eqref{tildeCY-nontilde} is stated for $L_\epsilon$ but also holds for $\Sigma_\epsilon \subseteq \{ \rho \geq 2 \epsilon^{2/5} \}$; in fact only \eqref{tildeCY-nontilde-bulk} is needed as $\Sigma_\epsilon$ does not enter the bubble $\{ \rho \leq 2 \epsilon^{2/5} \}$.

\subsection{Estimating the inverse of the linearized operator} There is a constant $C>1$ such that for all $\epsilon>0$ there holds
\begin{equation} \label{2torusLinverse}
\| a \|_{C^{1,\alpha}(g^{\rm cyl}_\epsilon)} \leq C \| (d+d^\dagger) a \|_{C^{0,\alpha}(g^{\rm cyl}_\epsilon)}, \quad a \in \mathcal{H}^1{}^\perp(T^2)
\end{equation}
where $g^{\rm cyl}_\epsilon = g^{\rm cyl}_\epsilon|_{\Sigma_\epsilon}$. Here we drop the restriction notation from the ambient $M_\epsilon$ to the submanifold $\Sigma_\epsilon$, so that $g^{\rm cyl}_\epsilon = dx^2 + \epsilon^2 \theta^2$ with $\theta = d \psi -i \mathcal{A}_1(x) dx$. Suppose not. Then there exists a sequence $a_i$ such that
\[
\| a_i \|_{C^{1,\alpha}(g^{\rm cyl}_\epsilon)} = 1, \quad \| (d+d^\dagger) a_i \|_{C^{0,\alpha}(g^{\rm cyl}_\epsilon)} \rightarrow 0.
\]
Write
\begin{equation} \label{torus-1forms}
a = a_x( e^{ix}, e^{i\psi} ) d x + \epsilon a_\psi ( e^{ix}, e^{i\psi}) d \psi.
\end{equation}
We use the following lemma on collapsing tori.

\begin{lem} \label{lem:toricollapse}
  Let $T^2 = S^1 \times S^1$ be a torus with coordinates $(e^{i x} ,e^{i \psi})$ equipped with the metric $g^{\rm cyl}_\epsilon = d x \otimes d x + \epsilon^2  \theta \otimes \theta$. Let $a^\epsilon \in \Omega^1(T^2)$ be a sequence of 1-forms such that
  \[
\| a^\epsilon \|_{C^{1,\alpha}(g^{\rm cyl}_\epsilon)} \leq C, \quad \| (d+d^\dagger) a^\epsilon \|_{C^{0}(g^{\rm cyl}_\epsilon)} \rightarrow 0,
\]
as $\epsilon \rightarrow 0$. Then there exists a subsequence such that $(a^\epsilon_x, a^\epsilon_\psi)$, using notation \eqref{torus-1forms}, converges uniformly to
  \[
(a^\infty_x, a^\infty_\psi) = (c_1,c_2)
  \]
  where the $c_i$ are constants.
\end{lem}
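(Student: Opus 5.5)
We work in the components $(a_x, a_\psi)$ from \eqref{torus-1forms}. The plan is to write out $(d+d^\dagger)a$ explicitly and read off what the equation forces in the limit. In these components the $C^{1,\alpha}(g^{\rm cyl}_\epsilon)$ bound says three things. First, $a_x$ and $a_\psi$ are uniformly bounded. Second, the derivatives measured in an orthonormal frame for $g^{\rm cyl}_\epsilon$ are bounded. That frame is $e_1 = \partial_x + i\mathcal{A}_1\partial_\psi$ and $e_2 = \epsilon^{-1}\partial_\psi$. Third, $\nabla a$ is H\"older in $g^{\rm cyl}_\epsilon$-distance.

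From the bound on $e_2$-derivatives we get
\[
|\partial_\psi a_x| + |\partial_\psi a_\psi| \leq C\epsilon .
\]
So both components become asymptotically independent of $\psi$. We also get $|\partial_x a_x| + |\partial_x a_\psi| \leq C$, after absorbing the $\mathcal{A}_1$ term. Here $\mathcal{A}_1$ is smooth and bounded because the torus stays in the bulk. By Arzel\`a--Ascoli on the fixed torus $S^1\times S^1$ with its $\epsilon$-independent coordinate metric, a subsequence converges uniformly to limits $(a^\infty_x, a^\infty_\psi)$. These limits are Lipschitz in $x$ and independent of $\psi$.

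Next we identify the equation the limits satisfy. We compute
\[
d a = (\epsilon\,\partial_x a_\psi - \partial_\psi a_x)\, dx\wedge d\psi
\]
and
\[
d^\dagger a = -\big(\partial_x a_x + \epsilon^{-1}\partial_\psi a_\psi\big) + (\text{terms from } \mathcal{A}_1),
\]
both up to the connection corrections. Passing to the orthonormal coframe $dx,\ \epsilon\theta$, the relevant quantities are $\star d a$ and $d^\dagger a$. These tend to zero uniformly by hypothesis. The dangerous terms carry a factor $\epsilon^{-1}\partial_\psi$, which is only $O(1)$, not small.

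To handle them we integrate over the circle fibre $\psi$, since fibre averages kill $\partial_\psi$ terms. Averaging $d^\dagger a \to 0$ over $\psi$ gives $\partial_x \bar a_x \to 0$ weakly, or uniformly after integrating in $x$. The same averaging applied to $\star da \to 0$ gives $\partial_x \bar a_\psi \to 0$ in the same sense; the weight $\epsilon$ cancels once we use the orthonormal coframe normalization. Here $\bar a$ denotes the fibre average, which converges to $a^\infty$. Hence $a^\infty_x$ and $a^\infty_\psi$ have vanishing distributional $x$-derivative on $S^1_x$, so they are constants $c_1, c_2$.

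The main obstacle is the bookkeeping of the connection term $\mathcal{A}_1(x)\,dx$ inside $\theta$ and the exact $\epsilon$-weights. We must check that after fibre averaging the $\mathcal{A}_1$ contributions are either total $\psi$-derivatives or $O(\epsilon)$. For this we would first choose a gauge in which $\mathcal{A}_1$ is constant along the curve, using a gauge transformation $\psi \mapsto \psi + f(x)$. That is possible up to a constant holonomy, which is harmless. With this gauge the frame is essentially a flat product, and the computation reduces to the flat torus $dx^2 + \epsilon^2 d\psi^2$.
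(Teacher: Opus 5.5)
Your proposal is correct and follows essentially the same route as the paper, which proves the cylinder version (Lemma \ref{lem:cylcollapse}) and says the torus case is identical: the $e_2=\epsilon^{-1}\partial_\psi$ bound gives $|\partial_\psi a_x|+|\partial_\psi a_\psi|=O(\epsilon)$, Arzel\`a--Ascoli gives a $\psi$-independent limit, and averaging $d^\dagger a$ and $\star da$ over the fibre kills the $\epsilon^{-1}\partial_\psi$ terms and forces $\partial_x a^\infty_x=\partial_x a^\infty_\psi=0$. Your integrate-in-$x$ (or distributional) passage to the limit is a slightly cleaner justification of the step where the paper passes $\partial_x$ directly through the uniform limit.
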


We will prove Lemma \ref{lem:toricollapse} later on in Section \ref{sec:cylcollapse}. Using this lemma, we conclude that a subsequence $((a_i)_x,(a_i)_\psi)$ converges uniformly to a pair of constants $(c_1,c_2)$. The condition $a \in \mathcal{H}^1{}^\perp$, which is
\[
\langle a, h_\epsilon dx \rangle_{L^2} = 0, \quad \langle a, \theta \rangle_{L^2} = 0,
\]
is equivalent to
\[
\int_{T^2} (h_\epsilon a_\psi) \, dx d \psi = 0, \quad \int_{T^2} (a_x + i \epsilon \mathcal{A}_1 a_\psi) \, dx d \psi = 0,
\]
and implies $(c_1,c_2)=(0,0)$ since $h_\epsilon \rightarrow 1$ as $\epsilon \rightarrow 0$. On the other hand, the argument in Lemma \ref{lem:elliptic-est}, to be stated and proven below, gives the uniform estimate
\[ 
\| a \|_{C^{1,\alpha}(g^{\rm cyl}_\epsilon)} \leq C \bigg( \| (d+d^\dagger) a \|_{C^{0,\alpha}(g^{\rm cyl}_\epsilon)} + \| a \|_{C^0(g^{\rm cyl}_\epsilon)} \bigg).
\]
To derive this, we unravel the collapsing circle on the universal cover and apply interior Schauder estimates on $S^1 \times \mathbb{R}$. Therefore along our sequence
\[
\frac{1}{2C} \leq \| a_i \|_{C^0(g^{\rm cyl}_\epsilon)} \leq 2 \bigg( |(a_i)_x|^2 + |(a_i)_\psi|^2 \bigg).
\]
This contradicts that a subsequence of $((a_i)_x,(a_i)_\psi)$ converges to zero. This proves \eqref{2torusLinverse}. In fact, \eqref{2torusLinverse} implies
\begin{equation}  \label{2torusLinverse2}
\| a \|_{C^{1,\alpha}(g_\epsilon)} \leq C \| (d+d^\dagger) a \|_{C^{0,\alpha}(g_\epsilon)}, \quad a \in \mathcal{H}^1{}^\perp(T^2)
\end{equation}
once $\epsilon$ is small enough. Here $g_\epsilon= g_\epsilon|_{\Sigma_\epsilon}$. Indeed, we can convert metrics in $d+d^\dagger$ via the general formula
\begin{align} \label{ddag2cyl}
  | (d+d^\dagger_{g^{\rm cyl}_\epsilon} ) a_i |_{g^{\rm cyl}_\epsilon} &\leq  C | (d+d^\dagger_{g_\epsilon}) a_i |_{g_\epsilon}\nonumber\\
  &+ C \bigg( |g_\epsilon-g^{\rm cyl}_\epsilon|_{g^{\rm cyl}_\epsilon} |\nabla a_i |_{g_\epsilon} + |\nabla^{\rm cyl}_\epsilon (g_\epsilon-g^{\rm cyl}_\epsilon)|_{g^{\rm cyl}_\epsilon} | a_i|_{g_\epsilon}\bigg) .
\end{align}
From here, we can use \eqref{bulk2cyl} to convert $g_\epsilon^{\rm cyl}$ to $g_\epsilon$, while noting the appearance of the second fundamental form
\[
 | \nabla^{\rm cyl}_{L_\epsilon} g_{L_\epsilon} |_{g^{\rm cyl}_\epsilon} \leq  |\nabla^{\rm cyl}_{M_\epsilon} g_{M_\epsilon} |_{g^{\rm cyl}_{\epsilon}}  + C |g_\epsilon|_{g^{\rm cyl}_\epsilon} |A|_{g^{\rm cyl}_\epsilon},
\]
which is controlled in a similar way to \eqref{secondfundcyl} so that
\[
|A|_{g^{\rm cyl}_\epsilon} \leq C \epsilon^{\frac{1}{5}}.
\]
We obtain an estimate of the form
\[
 \| (d+d^\dagger_{g^{\rm cyl}_\epsilon} ) a_i \|_{C^{0,\alpha}(g^{\rm cyl}_\epsilon)} \leq C \| (d+d^\dagger_{g_\epsilon}) a_i \|_{C^{0,\alpha}(g_\epsilon)} + C \epsilon^{\frac{1}{5}} \| a_i \|_{C^{1,\alpha}(g_\epsilon)}.
\]
The $C^1$ norms is similarly converted (using e.g. \eqref{nabla2nablacyl}), and the result is
\[
\| a \|_{C^{1,\alpha}(g_\epsilon)} \leq C \| a \|_{C^{1,\alpha}(g^{\rm cyl}_\epsilon)}
\]
for $\epsilon$ small. Applying these conversions between $g^{\rm cyl}_\epsilon$ and $g_\epsilon$ to \eqref{2torusLinverse} proves \eqref{2torusLinverse2} once $\epsilon$ is small enough.

\subsection{Estimating the quadratic operator}
Let $p \in \Sigma_\epsilon \subseteq M_\epsilon$. Choose uniform coordinates on the local universal cover of $M_\epsilon$ near $p$ by unravelling the collapsing circle fiber as in Lemma \ref{lemma-metric-scale}, region (1). On such a chart $B_1 \times \mathbb{R}$, the metric tensor $g_\epsilon$ appears as a matrix $\bar{g}_{ij}$ with uniform bounds compared to the Euclidean metric and bounded partial derivatives. The forms $\tilde{\omega}_\epsilon$ and $\tilde{\Omega}_\epsilon$ are also uniformly bounded in this uniform chart.

It follows that in this chart, $\mathcal{Q}(x,a,\partial a)$ is a smooth function of the jet $(a,\partial a)$ with $\mathcal{Q}(0)=0$ and $D\mathcal{Q}(0)=0$. For any such function, we have the property that for all $u,v \in \Omega^1(\Sigma_\epsilon)$ with $\| u \|_{C^{1,\alpha}} \leq 1$ and $\| v \|_{C^{1,\alpha}} \leq 1$, then
\begin{equation} \label{torus-quad-est}
\| \mathcal{Q}(u) - \mathcal{Q}(v) \|_{C^{0,\alpha}} \leq C (\| u \|_{C^{1,\alpha}}  + \| v \|_{C^{1,\alpha}} ) \| u - v \|_{C^{1,\alpha}}.
\end{equation}
Cover $\Sigma_\epsilon$ by such charts and descend from the covering space to obtain \eqref{torus-quad-est} as a global estimate on $\mathcal{Q}: \Omega^1(\Sigma_\epsilon) \rightarrow \Omega^*(\Sigma_\epsilon)$.

\subsection{Completing the fixed point argument}
Using the bound \eqref{2torusLinverse2} for the inverse of $d+d^\dagger$ and the quadratic estimate \eqref{torus-quad-est}, we estimate
\begin{align*}
  \| \mathcal{N}(u) \|_{C^{1,\alpha}} &\leq \| \mathcal{N}(u) - \mathcal{N}(0)\|_{C^{1,\alpha}} + \| \mathcal{N}(0) \|_{C^{1,\alpha}} \\
                       &\leq  C \| \mathcal{Q}(u) - \mathcal{Q}(0)\|_{C^{0,\alpha}} + C \| \mathcal{F}(0) \|_{C^{0,\alpha}} \\
  &\leq C \bigg( \| u  \|_{C^{1,\alpha}}^2 + \| \mathcal{F}(0) \|_{C^{0,\alpha}}  \bigg).
\end{align*}
By the smallness of the approximate solution \S \ref{sec:2torusapprox}, we conclude that
\[
\mathcal{N} : \{ \| u \|_{C^{1,\alpha}} < \epsilon \} \rightarrow \{ \| u \|_{C^{1,\alpha}} < \epsilon \}
\]
once $\epsilon$ is small enough, and similarly $\| \mathcal{N}(u) - \mathcal{N}(v) \| < \|u-v\|$ on this set. Therefore $\mathcal{N}$ has a fixed point and so $\mathcal{F}(a)=0$ can be solved. This completes the proof of lifting a line in the bulk region to a special Lagrangian 2-torus nearby the explicit submanifold $\Sigma_\epsilon$.

\section{Analysis in weighted spaces} \label{sec:quad-est}

We return to the elongated sphere connecting two bubble regions. In Section \S \ref{sec:fixedpoint}, we reduced the proof of Theorem \ref{mainthm} to three estimates. The outline for this section is:
\begin{itemize}
\item First, we prove the smallness bound for the approximate solution.
  \begin{equation} \label{step0}
 \| \mathcal{F}(0) \|_{C^{0,\alpha}_{\beta-1}}  < \epsilon^{\kappa+\iota}
  \end{equation}
  where $0<\kappa = 1 - \beta + \iota$,  $0< \iota \leq \frac{1}{2} + \frac{1}{50}$, and $\beta < 0$ is small: $|\beta| \ll 1$.
  
\item Second, we prove that the inverse operator of $d+d^\dagger$ admits the following degenerating bound.
\begin{equation} \label{step1}
  \| (d+d^\dagger)^{-1} \|_{\mathcal{C},\mathcal{B}} \leq C \epsilon^{- \frac{1}{2}-\frac{1}{100}}.
\end{equation}
This is where the weights $\beta<0$ in the H\"older norms are used.

\item Third, we prove the quadratic estimate 
\begin{equation} \label{step2}
\| \mathcal{Q}(u) - \mathcal{Q}(v) \|_{C^{0,\alpha}_{\beta-1}} \leq C \epsilon^\iota \| u - v \|_{C^{1,\alpha}_\beta}, \quad u,v \in \mathcal{U}.
\end{equation}
This amounts to controlling the error terms in our construction which differ from the local model.
\end{itemize}

Once \eqref{step0}, \eqref{step1} and \eqref{step2} are established, the proof of the main theorem is complete.

\subsection{Smallness of approximate solution}
In this subsection, we will prove
\begin{equation} \label{small-approx-slag}
  \begin{aligned}
    & \| \mathcal{F}(0) \|_{C^{0,\alpha}_{\beta-1}}  < \epsilon^{\kappa+\iota}, \\
    & 0<\kappa = 1 - \beta + \iota, \quad 0< \iota \leq \frac{1}{2} + \frac{1}{50}, \quad \beta < 0, \quad |\beta| \ll 1.
    \end{aligned}
  \end{equation}
When this estimate is used in Section \S \ref{sec:fixedpoint}, we take $\iota = \frac{1}{2} + \frac{1}{50}$. We introduce $\kappa$ defined this way to be consistent with constraints which will come later during the quadratic estimate in \S \ref{sec:est-errors}. Let us drop $\epsilon$ subscripts and write e.g. $g=g_\epsilon$,  $\tilde{\omega} = \tilde{\omega}_\epsilon$, etc, in this subsection for ease of notation. We do not bother with H\"older norms, and instead bound the stronger weighted derivative norm. The task is then to estimate
\begin{equation} \label{smallness0}
  \| \mathcal{F}(0) \|_{C^{1}_{\beta-1}}  \leq \rho \rho^{-\beta} \bigg( | (\tilde{\omega}|_L) |_{g_L}  + \rho | \nabla^{g_L} (\tilde{\omega}|_L)|_{g_L} +  | ({\rm Im} \tilde{\Omega}|_L) |_{g_L}  + \rho | \nabla^{g_L} ({\rm Im} \tilde{\Omega}|_L)|_{g_L} \bigg).
\end{equation}
We start by studying the untilded reference geometry $(\omega,\Omega)$. The submanifold $L$ is special Lagrangian with respect to $(\omega,\Omega)$ on the regions $\{ \rho \geq 2 \epsilon^{2/5} \}$ and $\{ \rho \leq \epsilon^{2/5} \}$. Therefore
\[
\mathcal{E}_1 :=  \rho \rho^{-\beta} \bigg( \left| (\omega|_L) \right|_{g_L}  + \rho \left| \nabla^{g_L} (\omega|_L) \right|_{g_L} \bigg) 
\]
has the property that
\[
{\rm supp} \, \mathcal{E}_1 \subseteq  \{ \epsilon^{2/5} \leq \rho \leq 2 \epsilon^{2/5} \}.
\]
Since $\omega^{\rm tn}|_L=0$, this term is equal to
\[
\mathcal{E}_1=  \rho \rho^{-\beta} \bigg( \left| (\omega- \omega^{\rm tn}) |_L \, \right|_{g_L}  + \rho \left| \nabla^{g_L} (\omega-\omega^{\rm tn})|_L \, \right|_{g_L} \bigg)
\]
and by \eqref{collar-error} and \eqref{secondfundform} this is bounded by
\begin{align*}
  \mathcal{E}_1 &\leq \rho \rho^{-\beta} \bigg( |\omega-\omega^{\rm tn}|_g + \rho |\nabla^g (\omega-\omega^{\rm tn})|_g + C \rho |A|_g |\omega-\omega^{\rm tn}|_g \bigg) \\
                &\leq C \rho^{-\beta} \epsilon^{2+\frac{1}{5}}.
\end{align*}
We also estimate, using \eqref{tildeCY-nontilde} for differences $\tilde{\omega}-\omega$, the following quantity
\begin{align*}
  \mathcal{E}_2 &:=  \rho \rho^{-\beta} \bigg( \left| (\tilde{\omega}-\omega)|_L \, \right|_{g_L}  + \rho \left| \nabla^{g_L} (\tilde{\omega}-\omega)|_L \, \right|_{g_L} \bigg) \\
  &\leq  C \rho^{-\beta} \epsilon^{2+\frac{1}{15}}.
\end{align*}
Combining estimates for $\mathcal{E}_1$ and $\mathcal{E}_2$ by the triangle inequality, we obtain
\begin{equation} \label{smallness1}
\rho \rho^{-\beta} \bigg( | (\tilde{\omega}|_L) |_{g_L}  + \rho | \nabla^{g_L} (\tilde{\omega}|_L)|_{g_L} \bigg) \leq C \epsilon^{2+\frac{1}{15}}.
\end{equation}
Since $\beta<0$, we used $\rho^{-\beta} \leq 1$. The argument for ${\rm Im} \, \tilde{\Omega}_\epsilon$ is analogous. Later on in \eqref{set-kappa}, we will set the parameter to be $\kappa = 1 + |\beta| + \iota$. Combining \eqref{smallness0} with \eqref{smallness1} and introducing $\epsilon^\kappa \epsilon^{-\kappa}$ gives
\[
\| \mathcal{F}(0) \|_{C^{0,\alpha}_{\beta-1}} \leq C \bigg[ \epsilon^{1+|\beta| + \iota} \bigg] \bigg[\epsilon^{-1 - |\beta| - \iota}  \epsilon^{2+\frac{1}{15}}  \bigg]
\]
and so
\begin{equation}
\| \mathcal{F}(0) \|_{C^{0,\alpha}_{\beta-1}} \leq C \epsilon^{\kappa} \epsilon^{1+\frac{1}{15} -|\beta|-\iota} \leq \epsilon^\kappa \epsilon^\iota
\end{equation}
once $\epsilon$ is small enough provided $\iota < \frac{1}{2} + \frac{1}{30} - \frac{|\beta|}{2}$. Taking $|\beta| \ll 1$ small gives \eqref{small-approx-slag}.

\subsection{Weighted estimates}
Before proving \eqref{step1}, we establish some lemmas on uniform coordinates as the fibers degenerate. Let $L_\epsilon \subseteq (M_\epsilon, g_\epsilon)$ be the approximate special Lagrangian submanifold inside the reference geometry. According to our setup, $L_\epsilon \subseteq M_\epsilon$ is an embedded submanifold which is an $S^1$-bundle away from two points $p_1$ and $p_2$ where the circles collapse. We denote a tubular neighborhood of $L_\epsilon$ by $X_\epsilon$,
\[
L_\epsilon \subseteq X_\epsilon \subseteq M_\epsilon
\]
so that $X_\epsilon \backslash \{ p_1, p_2 \}$ is an $S^1$ fibration over a three-dimensional affine base with singular fibers at $p_1$, $p_2$. Let $z_0 \in X_\epsilon \backslash \{ p_1, p_2 \}$ and choose a local bundle trivialization $z_0 \in U \times S^1$ with $U \subseteq \mathbb{R}^3$. View the circle as $S^1 = \mathbb{R} / \mathbb{Z}$ with coordinate $t \in \mathbb{R}$ on its universal cover. Local functions on $X_\epsilon$ near $z_0$ are understood as
\[
f(x,t) \in C^\infty(U \times \mathbb{R})
\]
which are periodic under $t \mapsto t + n$. 

When bounding various quantities independently of $\epsilon$ in weighted spaces, we will often use the following lemma on uniform coordinates.

\begin{lem} \label{lemma-metric-scale}
  Let $z_0 \in X_\epsilon \cap \{ \rho \geq R_0 \epsilon \}$ and denote $\hat{\rho}=\rho(z_0)$. There exists a local bundle trivialization $U_{z_0} \times S^1 \subseteq X_\epsilon$ around $z_0$ with a covering map
  \[
\varphi: B_1 \times \mathbb{R} \rightarrow U_{z_0} \times S^1, \quad \varphi(\cdot, t): B_1 \xrightarrow{\ \cong\ } U_{z_0}
  \]
  where $B_1 \subseteq \mathbb{R}^3$ is the Euclidean ball of radius one centred at the origin, with the following property. Let
  \[
\bar{g}_{ij} = \hat{\rho}^{-2} [g_\epsilon]_{ij}
  \]
where $[\bar{g}_\epsilon]_{ij}$ denotes the components of the metric tensor over $B_1 \times \mathbb{R}$, i.e. $\bar{g}_{ij}$ are the matrix entries of $\bar{g} = \hat{\rho}^{-2} \varphi^* g_\epsilon$. Then
  \begin{equation} \label{metric-scale}
C^{-1} \delta_{ij} \leq \bar{g}_{ij} \leq C \delta_{ij},
  \end{equation}
  over $B_1 \times \mathbb{R}$, and
  \begin{equation} \label{metric-scale2}
|\bar{g}_{ij} |_{C^k(B_1 \times \mathbb{R})} \leq C_k,
  \end{equation}
  where constants $C>0$ and $C_k>0$ are independent of $z_0$ and $\epsilon>0$.
\end{lem}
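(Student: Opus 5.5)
The chart will be an explicit rescaling of the Gibbons--Hawking coordinates, with the circle unwrapped onto its universal cover. Near $z_0$, write $\pi(z_0)=x_0\in\mathbb{R}^3$; in every region with $\rho\ge R_0\epsilon$ the metric is either $g^{\rm bulk}_\epsilon$, $g^{\rm tn}_\epsilon$, or their interpolation on the collar. By \eqref{geps2cyl}, on this whole region $g_\epsilon$ is uniformly equivalent to $g^{\rm cyl}_\epsilon=g_{\mathbb{R}^3}+\epsilon^2\theta\otimes\theta$, with $\rho^k|(\nabla^{\rm cyl}_\epsilon)^k g_\epsilon|\le C_k$. It therefore suffices to find charts in which $\hat\rho^{-2}g^{\rm cyl}_\epsilon$ is uniformly controlled, and then transfer the bounds for $g_\epsilon$ using \eqref{geps2cyl}.

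\textbf{Step 1 (base scaling).} Let $U_{z_0}$ be the Euclidean ball of radius $c\hat\rho$ about $x_0$, with $c$ small and fixed, and set $\varphi(y,t)=(x_0+c\hat\rho\, y,\ \text{fibre coordinate } \psi=\lambda t)$. On $U_{z_0}$ we have $\rho\sim\hat\rho$, because $\rho$ is comparable to the distance to the nearest marked point (or equals $1$ far away). Since $U_{z_0}$ is a ball avoiding the punctures, the bundle is trivial there. I will choose the gauge so that $\theta=d\psi-i\mathcal{A}$ with $|\mathcal{A}(x_0)|=0$ and $|\partial^k\mathcal{A}|\le C\rho^{-1-k}$ on $U_{z_0}$. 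This gauge comes from the radial-gauge construction used for \eqref{bulk-A}: the curvature is $\epsilon^{-1}\star dh$, which has size $\lesssim\rho^{-2}$, so a radial gauge centred at $x_0$ gives $|\mathcal{A}|\lesssim \rho^{-2}\cdot\hat\rho\sim\hat\rho^{-1}$. In the Taub-NUT region the curvature of $\theta^{\rm tn}_\epsilon$ is $\epsilon^{-1}\star dh^{\rm tn}_\epsilon\sim \rho^{-2}$, and the same holds in the bulk.

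\textbf{Step 2 (fibre scaling).} Pulling back, $\hat\rho^{-2}g^{\rm cyl}_\epsilon = c^2|dy|^2 + \hat\rho^{-2}\epsilon^2\lambda^2\,(dt - i\lambda^{-1}\varphi^*\mathcal{A})^2$. I choose $\lambda=\hat\rho/\epsilon$, which unwraps the collapsing circle on $\mathbb{R}$; this is legitimate since $\hat\rho\ge R_0\epsilon$, so the covering is nondegenerate. The fibre term becomes $(dt-i\epsilon\hat\rho^{-1}\varphi^*\mathcal{A})^2$. Now $\varphi^*\mathcal{A}=c\hat\rho\,\mathcal{A}_j(x_0+c\hat\rho y)\,dy^j$ has components bounded by $C$, and by Step 1 its $y$-derivatives of order $k$ are bounded by $C\hat\rho^{k+1}\rho^{-1-k}\le C_k$. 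The extra factor $\epsilon\hat\rho^{-1}\le R_0^{-1}$ is harmless, and for $R_0$ large it is even small, which gives the two-sided bound \eqref{metric-scale}. The coefficients are independent of $t$, so the $C^k(B_1\times\mathbb{R})$ bounds \eqref{metric-scale2} are uniform in $t$.

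\textbf{Step 3 (transfer to $g_\epsilon$).} Write $g_\epsilon=g^{\rm cyl}_\epsilon+(g_\epsilon-g^{\rm cyl}_\epsilon)$. The first line of \eqref{geps2cyl}, $|g_\epsilon-g^{\rm cyl}_\epsilon|_{g^{\rm cyl}}\le\tfrac12$, preserves \eqref{metric-scale} with a worse constant. The scale-invariant derivative bounds $\rho^k|(\nabla^{\rm cyl})^kg_\epsilon|\le C_k$ become, after rescaling by $\hat\rho$, bounds on covariant derivatives with respect to $\bar g^{\rm cyl}$. These convert to coordinate $C^k$ bounds because the Christoffel symbols of $\bar g^{\rm cyl}$ are controlled by Step 2.

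\textbf{Main obstacle.} The delicate point is the gauge in Step 1 on the collar $\{\epsilon^{2/5}\le\rho\le2\epsilon^{2/5}\}$. There $g_\epsilon$ is not Gibbons--Hawking, so one must check that the interpolating term $d(\zeta_\epsilon u_\epsilon)$ obeys scale-invariant bounds; this is where \eqref{glue-collar} enters. A second point is that the identification \eqref{gluing-ann} changes the fibre coordinate by $f(x)$, so the chart must be built in a single trivialization on each ball, with $\theta$ expressed accordingly. I would handle both by working entirely with the estimates \eqref{geps2cyl}, which already package these facts, so that only $g^{\rm cyl}_\epsilon$ needs an explicit chart.
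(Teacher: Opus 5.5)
Your proposal is correct and follows essentially the same route as the paper: rescale the base by $\hat\rho$ about $x_0$, unwrap the circle by the factor $\hat\rho/\epsilon$, observe that the off-diagonal term is $O(\epsilon\hat\rho^{-1}) = O(R_0^{-1})$, and transfer the bounds from $g^{\rm cyl}_\epsilon$ to $g_\epsilon$ via \eqref{geps2cyl}. The paper instead splits into the bulk case $\rho\ge\rho_0$ and the long region, and passes through the Taub--NUT rescaling $S_\epsilon$ in the latter; your single unified chart, with an explicit radial gauge giving $|\partial^k\mathcal{A}|\le C\rho^{-1-k}$, makes precise the derivative control on $\mathcal{A}$ that the paper leaves implicit when it cites $\mathcal{A}=O(\rho^{-1})$.
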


\begin{proof}
 Take $z_0 \in X_\epsilon$.  

 \begin{enumerate}[itemsep=0.3em, leftmargin=2.5em]
 \item[\textbf{(1)}] \textbf{The bulk region.}
   Suppose $z_0 \in \{ \rho \geq \rho_0 \}$ and choose an arbitrary local trivialization $z_0 \in B_1 \times \mathbb{R}$ on the universal cover. Here $\rho$ is uniformly equivalent to 1, and furthermore the metric $g_\epsilon$, by \eqref{geps2cyl} and $\mathcal{A} = O(\rho^{-1})$, is uniformly equivalent to
  \[
g_\epsilon^{\rm mod} = g_{\mathbb{R}^3} + \epsilon^2 dt \otimes dt.
\]
Consider the rescaling $T_{\epsilon^{-1}}(x,t) = (x,\epsilon^{-1} t)$, so that $g_1^{\rm mod} = T_{\epsilon^{-1}}^* g_\epsilon^{\rm mod}$ and $T_{\epsilon^{-1}}$ unravels the circle fibers. Then
\[
(B_1 \times \mathbb{R}, \ g_1^{\rm mod}) \overset{T_{\epsilon^{-1}}}{\rightarrow} (B_1 \times \mathbb{R}, \ g_\epsilon^{\rm mod})
\]
is an isometry which realizes \eqref{metric-scale}, with $\rho(z_0)$ being irrelevant in this region. Said otherwise, pullback estimate \eqref{geps2cyl} by $T_{\epsilon^{-1}}$ to obtain \eqref{metric-scale}, \eqref{metric-scale2} in coordinates given by $T_{\epsilon^{-1}}$.

\item[\textbf{(2)}] \textbf{The long region.}
  Suppose $z_0 \in \{ R_0 \epsilon \leq \rho \leq \rho_0 \}$ and write $\hat{\rho}= \rho(z_0)$. We take a local trivialization on the universal cover
\[
 z_0 \in B_{\hat{\rho}/2}(x_0) \times \mathbb{R} \subseteq \left\{ \frac{1}{2} \hat{\rho} \leq  \rho \leq \frac{3}{2} \hat{\rho} \right\} \subseteq X_\epsilon.
\]
We use the scaling map $S_\epsilon$ to convert this region to the long-range region of the Taub-NUT space. After pulling back by $S_\epsilon: x \mapsto \epsilon x$, the geometry is given by
\[
\left( \left\{ \frac{1}{2} \hat{\rho} \epsilon^{-1} \leq  |x| \leq \frac{3}{2} \hat{\rho} \epsilon^{-1} \right\}, \ S_\epsilon^* g_\epsilon \right)
\]
and in this model $\rho=|x|$ and $\{ |x| \geq R_0/2 \}$ since $R_0 \epsilon \leq \hat{\rho} \leq \rho_0$.

We notice that $S_\epsilon^* [\rho^{-2} g_\epsilon]$ is close to the cylinder metric on this region, as can be seen from pulling back \eqref{geps2cyl} via $S_\epsilon$ and using $S_\epsilon^* g_\epsilon^{\rm mod} = \epsilon^2 g_1^{\rm mod}$ and $S_\epsilon^* \rho = \epsilon \rho$. So we have
\begin{equation*}
\begin{aligned}
C^{-1} |x|^{-2} g^{\rm mod}_1 
&\leq S_\epsilon^* [\rho^{-2} g_\epsilon] 
\leq C |x|^{-2} g^{\rm mod}_1, \\
&\text{on } 
\left\{ \tfrac{1}{2} \hat{\rho} \epsilon^{-1} 
\leq |x| \leq 
\tfrac{3}{2} \hat{\rho} \epsilon^{-1} \right\}.
\end{aligned}
\end{equation*}
Next, we pullback this estimate by the rescaling of both the base and the circle given by
\begin{equation*}
\begin{aligned}
\phi: B_1 \times \mathbb{R} 
&\longrightarrow B_{\frac{1}{2} \hat{\rho} \epsilon^{-1}}(\epsilon^{-1} x_0) \times \mathbb{R}, \\
(x,t) 
&\longmapsto \bigl(\epsilon^{-1} x_0 + \hat{\rho} \epsilon^{-1} x,\ \hat{\rho} \epsilon^{-1} t \bigr),
\end{aligned}
\end{equation*}
so that $\phi^* g^{\rm mod}_1 = \hat{\rho}^2 \epsilon^{-2} g^{\rm mod}$. It follows that
\[
C^{-1} g_1^{\rm mod} \leq  \hat{\rho}^{-2} \phi^* S_\epsilon^* g_\epsilon \leq C g_1^{\rm mod} \quad {\rm on} \quad \left\{ \frac{1}{2}  \leq |x| \leq \frac{3}{2} \right\},
\]
which proves \eqref{metric-scale} in the coordinates given by $S_\epsilon \circ \phi$. Higher estimates \eqref{metric-scale2} are similar: it all follows from rescaling coordinates in the estimate \eqref{geps2cyl}.
\end{enumerate}
\end{proof}

We will also need coordinates at the collapsed circles $p_i$. 

\begin{lem} \label{lemma-metric-scale2}
  Let $z_i \in X_\epsilon \cap \{ \rho \leq R_0 \epsilon \}$ so that $\hat{\rho} = \rho(z_i) = R_0 \epsilon$. There exists an open set $z_i \in U \subseteq X_\epsilon$ and a coordinate chart
  \[
\varphi: B_1^{\mathbb{R}^4} \rightarrow U
  \]
where $B_1 \subseteq \mathbb{R}^4$ is the Euclidean ball of radius one centred at the origin with the following property. Let $[g_\epsilon]_{ij}$ denote the components of the metric tensor in this chart, i.e. the entries of $\varphi^* g_\epsilon$, and let $\bar{g}_{ij} = \hat{\rho}^{-2} [g_\epsilon]_{ij}$. Then
  \begin{equation} \label{metric-scale3}
C^{-1} \delta_{ij} \leq \bar{g}_{ij} \leq C \delta_{ij}, \quad |\bar{g}_{ij}|_{C^k} \leq C_k
  \end{equation}
over $B_1^{\mathbb{R}^4}$, where $C>0$ and $C_k>0$ are independent of $z_0$ and $\epsilon>0$.
\end{lem}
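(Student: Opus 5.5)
Near $p_i$ the reference metric on $\{\rho \le \epsilon^{2/5}\}$ is exactly the rescaled multi-Taub-NUT metric, $g_\epsilon = \epsilon^2 S^*_{\epsilon^{-1}} g^{\rm tn}$. On $\{\rho \le R_0\epsilon\}$ we have $\hat\rho = R_0\epsilon$. The plan is therefore to pull the problem back to the fixed, $\epsilon$-independent space $(N_i, g^{\rm tn})$, where we can use compactness. Concretely, take $z_i \in \{\rho \le R_0\epsilon\}$ and set $w_i = S_\epsilon^{-1}(z_i) \in N_i$, which lies in the compact region $K = \{|x| \le R_0\} \subseteq N_i$. The point $w_i$ may be one of the monopole points $y_a$, where the circle fiber collapses. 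This is why the chart must be a genuine $\mathbb{R}^4$ chart rather than the $B_1 \times \mathbb{R}$ covering used in Lemma \ref{lemma-metric-scale}.

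The first step is to fix, once and for all, a finite atlas covering the slightly larger compact set $K' = \{|x| \le 2R_0\} \subseteq N_i$. At a monopole point $y_a$ we use the smooth $\mathbb{R}^4$ coordinates produced by the Hopf fibration description in \S\ref{sec:multiTAUB}. Away from the $y_a$ we use ordinary coordinates $(x, \psi)$ on small balls. Since $K'$ is compact and $g^{\rm tn}$ is smooth and positive definite, there is a uniform $r_0 > 0$ with the following property: every $w \in K$ admits a chart $\psi_w: B^{\mathbb{R}^4}_1 \to N_i$ with $\psi_w(0) = w$, obtained by translating and dilating by $r_0$ inside one of the fixed charts, such that
\[
C^{-1}\delta \le \psi_w^* g^{\rm tn} \le C\delta, \qquad |\psi_w^* g^{\rm tn}|_{C^k} \le C_k,
\]
with constants depending only on $g^{\rm tn}$ and the atlas (a Lebesgue number argument). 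Here $g^{\rm tn}$ depends on $\epsilon$ only through the terms $\epsilon\lambda_i$ and $\epsilon^2 \ell_i$ in \eqref{N_i-localgeom}. These are smooth perturbations that are bounded on $K'$ uniformly for $\epsilon \le 1$, so the constants can be taken uniform in $\epsilon$. If one prefers, this can be seen by working with the whole family over $\epsilon \in [0, \epsilon_0]$, which is compact.

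The second step is to set $\varphi = S_\epsilon \circ \psi_{w_i}$. Then
\[
\varphi^* g_\epsilon = \epsilon^2 \psi_{w_i}^* g^{\rm tn}, \qquad \bar g = \hat\rho^{-2}\varphi^* g_\epsilon = R_0^{-2}\,\psi_{w_i}^* g^{\rm tn}.
\]
Since $R_0$ is a fixed constant, \eqref{metric-scale3} follows directly from the uniform bounds in the first step. We also need the image $U = \varphi(B_1)$ to stay inside the region where $g_\epsilon$ equals the rescaled Taub-NUT metric. This holds because $U \subseteq \{\rho \le 2R_0\epsilon\} \subseteq \{\rho \le \epsilon^{2/5}\}$ for $\epsilon$ small, which is guaranteed by choosing $r_0$ small relative to $R_0$. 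Points with $|x|$ slightly above $R_0$ are harmless, since $\rho$ and $g_\epsilon$ are still explicitly the Taub-NUT model there.

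The main obstacle is making the compactness constants independent of $\epsilon$, given the small $\epsilon$-dependence of $h_{{\rm tn},i}$, and checking smoothness at the collapsing points. The latter is handled by the explicit Hopf/$\mathbb{R}^4$ polar description of the metric near each $y_a$ from \S\ref{sec:multiTAUB}. The remainder of the argument is routine rescaling.
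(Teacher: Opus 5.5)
Your proposal is correct and follows essentially the same route as the paper. The paper also pulls back by $S_\epsilon$ to the fixed compact region $\{|x| \le R_0\}$ of multi-Taub-NUT, invokes compactness to get uniform Euclidean bounds on $g^{\rm tn}$, and rescales using $\hat\rho \sim \epsilon$. Your version additionally makes explicit several points the paper leaves implicit: the uniformity in $\epsilon$ of the $\epsilon\lambda_i$, $\epsilon^2\ell_i$ terms in $h_{{\rm tn},i}$, the smooth $\mathbb{R}^4$ charts at the monopole points, and the buffer region $\{|x|\le 2R_0\}$ so that charts centred near $|x|=R_0$ still see the Taub-NUT model.
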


\begin{proof}
We work on the compact gluing region. Suppose $z_0 \in \{ \rho \leq R_0 \epsilon \}$. By \eqref{bg-metric-regions} the metric here is $g_\epsilon = \epsilon^2 S_{\epsilon^{-1}}^* g^{\rm tn}$ with $S_\epsilon(x,t) = (\epsilon x, t)$ where $S_\epsilon$ rescales the base $\mathbb{R}^3$. We have an isometry
\[
( \{ |x| \leq R_0 \} , g^{\rm tn}) \overset{S_{\epsilon}}{\rightarrow} (\{ \rho \leq R_0 \epsilon \}, \epsilon^{-2} g_\epsilon).
\]
By compactness
\[
  C^{-1} g^{\rm euc} \leq g^{\rm tn} \leq C g^{\rm euc} \quad |g^{\rm tn}|_{C^k(g^{\rm euc})} \leq C_k
\]
on $\{ |x| \leq R_0 \}$. This gives the desired estimate for $\epsilon^{-2} g_\epsilon$ in the chart provided by $S_\epsilon$, and since $\rho \sim \epsilon$ this proves the estimate \eqref{metric-scale3} for $\rho^{-2} g_\epsilon$.
\end{proof}

We have constructed above uniform coordinates on the ambient $(M_\epsilon, g_\epsilon)$. We will also use uniform coordinates on $(L_\epsilon, g_\epsilon|_{L_\epsilon})$. The same proof goes through verbatim along the submanifold $L_\epsilon \subseteq (M_\epsilon, g_\epsilon)$. Namely:

\begin{itemize}
\item Near each point $z_0 \in L_\epsilon$ with $\rho \geq R_0 \epsilon$, there exists coordinates $(x,t)$ on $L_\epsilon$ near $z_0$ on the local universal cover $(-1,1) \times \mathbb{R}$ such that $\bar{g} = \rho^{-2} g_\epsilon|_{L_\epsilon}$ satisfies
\begin{equation} \label{metricscale3}
C^{-1} \delta_{ij} \leq \bar{g}_{ij} \leq C \delta_{ij}, \quad |\bar{g}_{ij}|_{C^k(g_{\rm euc})} \leq C_k
\end{equation}
on $(-1,1) \times \mathbb{R}$.

\item Near each point $z_0 \in L_\epsilon$ with $ \rho \leq R_0 \epsilon$, there exists coordinates $(x,\psi)$ on $L_\epsilon$ near $z_0$ such that $\bar{g} = \rho^{-2} g_\epsilon|_{L_\epsilon}$ satisfies \eqref{metricscale3} on a coordinate ball $B_2 \subseteq \mathbb{R}^2$.
\end{itemize}

We will now obtain estimates for $d+d^\dagger$ using these weighted norms. Similar results can be found in Foscolo \cite{FoscoloK3}.

\begin{lem} \label{lem:elliptic-est}
 Let $L_\epsilon \subseteq (M_\epsilon,g_\epsilon)$ be the submanifold serving as approximate solution. For any $a \in \Omega^1(L_\epsilon)$, there holds
  \begin{equation} \label{elliptic-est}
\| a \|_{C^{1,\alpha}_\beta} \leq C \bigg( \| a \|_{C^0_\beta} + \| (d+d^\dagger) a \|_{C^{0,\alpha}_{\beta-1}} \bigg)
  \end{equation}
  where $C$ is independent of $\epsilon>0$. 
\end{lem}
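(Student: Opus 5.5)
The plan is to prove \eqref{elliptic-est} by localization: reduce to a uniform interior Schauder estimate in the charts of Lemma \ref{lemma-metric-scale} and Lemma \ref{lemma-metric-scale2} (in their versions along $L_\epsilon$, namely \eqref{metricscale3}). Fix a point $z_0 \in L_\epsilon$ and set $\hat\rho = \rho(z_0)$.

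If $\hat\rho \geq R_0\epsilon$, take the chart $(-1,1)\times\mathbb{R}$ on the local universal cover of $L_\epsilon$. In it the rescaled metric $\bar g = \hat\rho^{-2} g_\epsilon|_{L_\epsilon}$ is uniformly equivalent to the Euclidean metric, with all $C^k$ bounds uniform in $z_0$ and $\epsilon$. If $\hat\rho \le R_0\epsilon$, use instead the ball chart $B_2\subseteq\mathbb{R}^2$ near the cigar tip.

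In either chart, pull $a$ back and set $\bar a = \hat\rho^{-1}\, a$, regarded as a $1$-form for $\bar g$. Since $|a|_{g} = \hat\rho^{-1}|a|_{\bar g}$ for $1$-forms and $d+d^\dagger_{\bar g} = \hat\rho^{2}\,(\cdots)$ scales by one power of $\hat\rho$, we get the following correspondence:
\begin{itemize}
\item $\rho^{-\beta}|a|_g$ corresponds to $\hat\rho^{-\beta}|\bar a|$, up to fixed constants;
\item $\rho^{1-\beta}|\nabla a|$ and the $\alpha$-seminorm with weight $\rho^{1+\alpha-\beta}$ correspond to the unweighted $C^{1,\alpha}$ quantities of $\bar a$, multiplied by $\hat\rho^{-\beta}$;
\item $\rho^{1-\beta}|(d+d^\dagger)a|$ corresponds to $\hat\rho^{-\beta}|(d+d^\dagger_{\bar g})\bar a|$.
\end{itemize}
On the chart, $\rho$ is comparable to $\hat\rho$: it lies in $[\tfrac12\hat\rho,\tfrac32\hat\rho]$, or $\rho \equiv R_0\epsilon$ on the bubble. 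So the weight $\rho^{-\beta}$ may be frozen to the constant $\hat\rho^{-\beta}$ at the cost of a uniform factor.

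Next apply the standard interior Schauder estimate for the first-order elliptic system $d+d^\dagger_{\bar g}$ on $1$-forms. Its coefficients are controlled by the uniform $C^k$ bounds on $\bar g_{ij}$. This gives
\[
\|\bar a\|_{C^{1,\alpha}(B_{1/2})} \le C\bigl(\|\bar a\|_{C^0(B_1)} + \|(d+d^\dagger_{\bar g})\bar a\|_{C^{0,\alpha}(B_1)}\bigr),
\]
with $C$ independent of $z_0$ and $\epsilon$. On the covering $(-1,1)\times\mathbb{R}$, everything is periodic in $t$ with period $\hat\rho^{-1}\epsilon$ or $\epsilon^{-1}$. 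Hence it suffices to apply the estimate on unit balls centered at arbitrary points of the cover, and the sup over the cover equals the sup downstairs.

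Multiplying by $\hat\rho^{-\beta}$, undoing the scaling, and taking the supremum over $z_0$ gives the $C^1_\beta$ part of the left side of \eqref{elliptic-est} and the local Hölder seminorm part. The Hölder seminorm as defined in the paper is taken over $d_\epsilon(x,y) < {\rm inj}_{g_\epsilon}$, and the injectivity radius degenerates like $\epsilon\rho$. So pairs of points at distance comparable to $\hat\rho$ can lie in the same chart only after passing to the cover, which is why we measure via parallel transport in the unravelled chart. For pairs with $d_\epsilon(x,y) \gtrsim \hat\rho$ lying outside a single chart, the seminorm is bounded trivially by $2\sup\rho^{-\beta}\rho^{1+\alpha}\ldots$, that is, by the $C^1_\beta$ norm. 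Hence only nearby pairs matter.

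The main obstacle I expect is this bookkeeping: making sure that the Hölder seminorm on $L_\epsilon$, defined with the collapsed metric and parallel transport, is uniformly comparable to the Euclidean Hölder seminorm of $\bar a$ on the unravelled chart. On the downstairs surface, geodesic balls of radius $\sim\hat\rho$ wrap around the circle many times. I would handle this by noting that for $d_\epsilon(x,y)<{\rm inj}$, the points have lifts in one chart at comparable distance. A second, lesser obstacle is the transition region $\rho\sim R_0\epsilon$, where the two chart types overlap; since both have uniform bounds there, the estimates patch with a constant.
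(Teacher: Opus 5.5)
Your proposal follows the paper's proof. The paper also works in the uniform rescaled charts on the local universal cover from \eqref{metricscale3} (and the ball chart on the cigar cap), applies interior Schauder estimates for $d+d^\dagger$ there, freezes $\rho\approx\hat\rho$, multiplies by $\hat\rho^{-\beta}$, and reads off the shift $\beta\mapsto\beta-1$ from how $d+d^\dagger$ and $1$-form norms scale under $g\mapsto\hat\rho^{-2}g$. Your extra H\"older bookkeeping is more careful than the paper's, but one direction needs care: on the right-hand side you must bound the chart (cover) seminorm of $(d+d^\dagger)a$ by the downstairs seminorm. That holds only if the seminorm is understood as computed in the unwrapped charts, as the paper implicitly assumes. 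The literal restriction $d_\epsilon<\mathrm{inj}_{g_\epsilon}\sim\epsilon$ does not by itself control oscillation in the $x$-direction at scales between $\epsilon$ and $\hat\rho$.
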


\begin{proof} For any $z_0 \in L_\epsilon \cap \{ \rho \geq R_0 \epsilon \}$, we take a coordinate chart around it on the universal cover as in \eqref{metricscale3}, so that near $z_0$ we work locally in coordinates $(x,t) \in (-1,1) \times \mathbb{R}$ translated such that $z_0 = (0,0)$. The 1-form $a$ appears as
  \[
a \in \Omega^1((-1,1) \times \mathbb{R})
  \]
invariant under the $\mathbb{Z}$-action of $t \mapsto t+n$. Apply the interior Schauder estimate for elliptic systems in this coordinate chart:
  \[
\| a \|_{C^{1,\alpha}([-\frac{1}{4}, \frac{1}{4}]^2)} \leq C \bigg( \| a \|_{C^0([-\frac{1}{2}, \frac{1}{2}]^2)} + \| (d+d^\dagger_{\rho^{-2} g_\epsilon} ) a \|_{C^{0,\alpha}([-\frac{1}{2},\frac{1}{2}]^2)} \bigg).
  \]
  Norms are with respect to the Euclidean metric, and uniform ellipticity of $d+d^\dagger$ follows from \eqref{metricscale3}. We convert the norms from $g^{\rm euc}$ to $\rho(x_0)^{-2} g_\epsilon$ using \eqref{metricscale3}, and this is when the weights appear with each derivative, e.g.
  \[
| a |_{C^1(\rho^{-2} g_\epsilon)} = \rho |a|_{g_\epsilon} + \rho^2 |\nabla a|_{g_\epsilon}.
  \]
  We can multiply both sides of this local estimate by $\rho(x_0)^{-\beta}$ to introduce the $\beta$-factor. The construction of the uniform coordinate charts is such that the function $\rho$ is uniformly equivalent to $\rho(x_0)$ on the chosen chart. The weight shift of $\beta \mapsto \beta-1$ comes from
  \[
\rho^{-\beta} |(d+d^\dagger_{\rho^{-2} g_\epsilon})a |_{\rho^{-2} g_\epsilon} = \rho^{-\beta} \rho^2 |(d+d^\dagger_{g_\epsilon}) a|_{g_\epsilon}.
  \]
  On the other hand, if $z_0 \in L_\epsilon \cap \{ \rho \leq R_0 \epsilon \}$, then we apply \eqref{metricscale3} and a similar argument via local Schauder estimate on the local 1-form $a \in \Omega^1(B_1^{\mathbb{R}^2})$.
  
\end{proof}

\subsection{Inverting the linearized operator} \label{sec:invertL}

We would like to upgrade the standard elliptic estimate in Lemma \ref{lem:elliptic-est} by removing the $ \| a \|_{C^0_\beta}$ term on the right-hand side. This is typically acheived by a blow-up argument, however in our setup one of the possible rescalings ends up on the cylinder inside the torus where we face elements of
\begin{equation} \label{K-defn}
\mathcal{H} = {\rm span} \{ \kappa_1, \, \kappa_2 \}, \quad \kappa_1 = \zeta dx, \quad \kappa_2 = \zeta \theta
\end{equation}
and these 1-forms derail the contradiction argument. Here $\zeta(x)$ is a fixed cutoff function supported in the middle of the cylindrical part in the torus bulk region
\[
{\rm supp} \, \zeta \subseteq \{ \rho = 1 \}.
\]
Thus $\{ {\rm supp} \, \zeta \} \cap L_\epsilon$ is diffeomorphic to a cylinder with coordinates $(x,e^{i \psi})$ and where the 1-forms $dx$ and $\theta$ are defined (see Figure \ref{fig:bump}). Intuitively, a special Lagrangian cylinder in a four-torus has translational symmetry in two normal directions. After converting these normal vector fields to 1-forms and applying the ambient complex structure, these correspond to the directions $dx$ and $\theta$ tangent to the cylinder and in the kernel of our linearized operator $d+d^\dagger$ with cylindrical metric. We should gauge fix to remove these symmetries. Practically, we restrict to the following subspace of 1-forms
\[
\mathcal{H}^\perp = \left\{ a \in \Omega^1(L_\epsilon) : \int_{L_\epsilon} a \wedge \star \kappa_i = 0 \quad {\rm for} \ i=1,2 \right\}
\]
and obtain bounds uniform in $\epsilon$. Note that on the support of $\kappa_i$, the reference metric is $g_\epsilon|_{L_\epsilon} = h_\epsilon dx^2 + \epsilon^2 h^{-1} \theta^2$ and
\begin{equation} \label{kappa-star}
\star \kappa_1 = \epsilon \zeta h_\epsilon^{-1} \theta, \quad \star \kappa_2 = - \epsilon^{-1} \zeta h_\epsilon dx.
  \end{equation}
The uniform inverse bound is:

\begin{lem} \label{lem:key-estimate}
 Let $L_\epsilon \subseteq (M_\epsilon,g_\epsilon)$ be the 2-sphere connecting a pair of $p_i$ serving as approximate solution and let $\beta<0$. There exists $\epsilon_0>0$ and $C >1$ such that 
  \begin{equation}
\| a \|_{C^{1,\alpha}_\beta} \leq C \| (d+d^\dagger) a \|_{C^{0,\alpha}_{\beta-1}}
\end{equation}
for all $a \in \mathcal{H}^\perp$ and $0<\epsilon<\epsilon_0$.
\end{lem}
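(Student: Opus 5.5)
The plan is a blow-up (contradiction) argument that upgrades Lemma \ref{lem:elliptic-est}. Suppose the estimate fails. Then there are $\epsilon_i \to 0$ and $a_i \in \mathcal{H}^\perp$ with
\[
\| a_i \|_{C^{1,\alpha}_\beta} = 1, \qquad \| (d+d^\dagger) a_i \|_{C^{0,\alpha}_{\beta-1}} \to 0 .
\]
By Lemma \ref{lem:elliptic-est}, $\| a_i \|_{C^0_\beta} \geq (2C)^{-1}$. Hence there are points $z_i \in L_{\epsilon_i}$ with $\rho(z_i)^{-\beta} |a_i(z_i)|_{g_{\epsilon_i}} \geq (4C)^{-1}$. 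We rescale around $z_i$ by $\rho(z_i)^{-2} g_{\epsilon_i}$, normalize $a_i$ by $\rho(z_i)^{\beta-1}$ (the scaling is compatible with the weights), and use the uniform charts of Lemmas \ref{lemma-metric-scale} and \ref{lemma-metric-scale2} on the local universal cover. The $C^{1,\alpha}$ bounds then give, via Arzel\`a--Ascoli, a subsequence converging in $C^1_{\rm loc}$ to a nonzero limit $a_\infty$ with $(d+d^\dagger)a_\infty = 0$ on a limit space. The growth bound $|a_\infty| \leq C \rho_\infty^{\beta}$ is inherited from the weighted norm. We split into cases according to where $z_i$ sits, and rule out each limit.

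\textbf{Case 1: bubble scale}, $\rho(z_i) \leq C \epsilon_i$. After pulling back by $S_{\epsilon_i}$ the geometry converges to the cigar $(L_{\rm cig}, g_{\rm cig})$ of \S\ref{sec:cigar}, with $\rho_\infty \sim \max(1,|x|)$. The limit is a harmonic $1$-form with $|a_\infty| \leq C|x|^{\beta}$, and $\beta<0$. The gradient bound $|x||\nabla a_\infty| \leq C|x|^\beta$ comes from the $C^1_\beta$ norm. Proposition \ref{cig-no-slow-forms} with $\iota = |\beta|$ then gives $a_\infty = 0$, a contradiction.

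\textbf{Case 2: neck scale}, $\epsilon_i \ll \rho(z_i) \ll 1$. Rescaling by $\rho(z_i)$, the base converges to $\mathbb{R}\setminus\{0\}$ or a half-line. The circle fibre has length $\sim \epsilon_i/\rho(z_i) \to 0$, so the surface collapses. Unwrapping the fibre as in Lemma \ref{lemma-metric-scale}(2), and using \eqref{long-region-cyl} and \eqref{long-region-A} to see that the metric becomes flat (this is where the subcase distinction at scale $\epsilon R^{(1+\tau)/2}$ enters), we argue as in Lemma \ref{lem:toricollapse}. The limit components $(a_x,a_\psi)$ are then functions of $x$ alone, and harmonicity of $dx$-type and $\theta$-type coefficients forces them to be constant. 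In weighted variables they behave like $|x|^{-\beta}\cdot$const times the normalization, so the bound $|a_\infty| \leq C|x|^{\beta}$ on a whole half-line with $\beta \neq 0$ forces the constants to vanish. The conformal weight $\rho^{-1}$ under rescaling of the fibre is what distinguishes $|x|^\beta$ from constants.

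\textbf{Case 3: bulk}, $\rho(z_i) \sim 1$. Here the weights are harmless. The unwrapped limit lives on the whole segment $L_0$, where the weight $\rho$ blows down toward the endpoints, and Cases 1 and 2 show no mass escapes to the ends. By the Lemma \ref{lem:toricollapse} argument the limit is $c_1 dx + c_2\,\epsilon\, d\psi$ with constants $c_1,c_2$ on the cylindrical part. The orthogonality conditions against $\kappa_1,\kappa_2$, written using \eqref{kappa-star} and $h_\epsilon \to 1$, say $\int \zeta a_x = 0$ and $\int \zeta a_\psi = 0$ in the limit. These force $c_1 = c_2 = 0$, a contradiction.

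The main obstacle is the matching between regions: showing that the limits in Cases 2 and 3 are genuinely constant with no mass concentrating at intermediate scales. This requires $C^0$ convergence under collapse, via a Fourier decomposition along the fibre in which the nonzero modes decay at rate $\epsilon^{-1}$, together with uniform control of the non-flat errors in \eqref{long-region-cyl}. I would handle it by taking a careful subsequence of scales and using the explicit $b_n'' - n^2 b_n$ analysis from Proposition \ref{cig-no-slow-forms}.
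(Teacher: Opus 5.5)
Your proposal is correct and is essentially the paper's proof. It is the same contradiction/blow-up argument with the same three regimes: the bulk case is closed by Lemma \ref{lem:cylcollapse} plus orthogonality to $\kappa_1,\kappa_2$, the cigar scale by Proposition \ref{cig-no-slow-forms} with $\iota=|\beta|$, and the neck scale by Lemma \ref{lem:cylcollapse} together with the $\rho^\beta$ decay as $\rho\to\infty$.

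The ``matching'' obstacle you flag does not actually arise, because the cases are exhaustive and each is closed independently. In the bulk case you only need a connected compact $K\subseteq\{\rho>0\}$ containing the limit point and $\operatorname{supp}\zeta$. Lemma \ref{lem:cylcollapse} already gives uniform convergence from Arzel\`a--Ascoli and $|\partial_\psi a|=O(\epsilon)$, with no Fourier analysis needed. One small slip: the normalizing factor should be $\rho(z_i)^{-\beta-1}$, not $\rho(z_i)^{\beta-1}$.
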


\begin{figure}

\begin{tikzpicture}[scale=0.7]

  \pgfmathsetmacro{\R}{1.2}
  \pgfmathsetmacro{\L}{4.0}
  \pgfmathsetmacro{\cap}{1.0}
  \pgfmathsetmacro{\gap}{1.0}
  \pgfmathsetmacro{\e}{0.35}

  \pgfmathsetmacro{\cylstart}{\gap}
  \pgfmathsetmacro{\cylend}{\gap+\L}
  \pgfmathsetmacro{\rightcap}{\gap+\L+\gap}
  \pgfmathsetmacro{\center}{\gap + \L/2}


\fill[gray!8]
  (\cylstart,\R)
    arc (90:-90:{\e} and {\R})      
  -- (\cylend,-\R)
    arc (-90:-270:{\e} and {\R})    
  -- cycle;
  
  \fill[gray!8]
    (0,-\R)
      .. controls ({-0.7*\cap},-\R) and ({-\cap},{-0.6*\R}) .. ({-\cap},0)
      .. controls ({-\cap},{0.6*\R}) and ({-0.7*\cap},\R) .. (0,\R)
      arc (90:270:{\e} and {\R})
    -- cycle;

  \fill[gray!8]
    (\rightcap,-\R)
      .. controls ({\rightcap+0.7*\cap},-\R) and ({\rightcap+\cap},{-0.6*\R}) .. ({\rightcap+\cap},0)
      .. controls ({\rightcap+\cap},{0.6*\R}) and ({\rightcap+0.7*\cap},\R) .. (\rightcap,\R)
      arc (90:-90:{\e} and {\R})
    -- cycle;


  \draw[thick]
    (0,-\R)
      .. controls ({-0.7*\cap},-\R) and ({-\cap},{-0.6*\R}) .. ({-\cap},0)
      .. controls ({-\cap},{0.6*\R}) and ({-0.7*\cap},\R) .. (0,\R);
  \draw[thick] (0,0) ellipse ({\e} and {\R});

  \draw[thick] (\cylstart,-\R) -- (\cylend,-\R);
  \draw[thick] (\cylstart, \R) -- (\cylend, \R);
  \draw[thick] (\cylstart,0) ellipse ({\e} and {\R});
  \draw[thick] (\cylend,0)   ellipse ({\e} and {\R});

  \draw[thick]
    (\rightcap,-\R)
      .. controls ({\rightcap+0.7*\cap},-\R) and ({\rightcap+\cap},{-0.6*\R}) .. ({\rightcap+\cap},0)
      .. controls ({\rightcap+\cap},{0.6*\R}) and ({\rightcap+0.7*\cap},\R) .. (\rightcap,\R);
  \draw[thick] (\rightcap,0) ellipse ({\e} and {\R});

  \node at (\center,-\R-0.45) {$L_\epsilon$};

  \draw[thick]
    (\cylstart,{\R+0.45})
      .. controls ({\center-0.8},{\R+0.45})
      and ({\center-0.5},{\R+1.25})
      .. ({\center},{\R+1.25})
      .. controls ({\center+0.5},{\R+1.25})
      and ({\center+0.8},{\R+0.45})
      .. (\cylend,{\R+0.45});

  \node at (\center,{\R+1.65}) {$\kappa_1 = \zeta \, dx$};

\end{tikzpicture}
\caption{One-form localized to the cylindrical part of $L_\epsilon$.} \label{fig:bump}
\end{figure}

\begin{proof}
  Suppose by contradiction that there exists a sequence $a_i \in \Omega^1(L_{\epsilon_i})$ and $\epsilon_i \in (0,\epsilon_0)$ such that $\epsilon_i \rightarrow 0$ and
  \[
\| a_i \|_{C^{1,\alpha}_\beta} \geq D_i \| (d+d^\dagger) a_i \|_{C^{0,\alpha}_{\beta-1}}, \quad D_i \rightarrow \infty.
  \]
  We rescale the sequence such that
  \begin{equation} \label{ai-est}
\| a_i \|_{C^{1,\alpha}_\beta}=1, \quad \| (d+d^\dagger) a_i \|_{C^{0,\alpha}_{\beta-1}} \leq D_i^{-1}.
\end{equation}
The uniform elliptic estimate of Lemma \ref{lem:elliptic-est} implies
\[
1 \leq C (\| a_i \|_{C^0_\beta} + D_i^{-1} ),
\]
and so there exists a sequence $x_i \in L_{\epsilon_i}$ with
\begin{equation} \label{key-lowerbdd}
  |a_i(x_i)|_{g_{\epsilon_i}} \geq \frac{1}{2} C^{-1} \rho_{\epsilon_i}^\beta(x_i).
  \end{equation}
 From now on, we simply write $\rho = \rho_{\epsilon_i}$ and suppress its dependence on $\epsilon_i$, as done in \eqref{rho-defn}. We also simply write $\epsilon$ instead of $\epsilon_i$. To derive a contradiction, we must tame the three regions.
  
\vspace{1ex}
\paragraph{\bf Case 1.} The bulk region: suppose $\lim \inf \rho(x_i) > 0$. Then there exists a subsequence along which $x_i \rightarrow x_\infty$ with $\rho(x_\infty)>0$. Let $K \subseteq L_\epsilon$ be a fixed compact set disjoint from $\{ \rho = 0 \}$ with $x_\infty \in K$. Once $\epsilon$ is small enough, then $K \subseteq \{ \rho > 2 \epsilon^{\frac{2}{5}} \}$. Recall that here the reference metric $g_\epsilon|_{L_\epsilon}$ is equivalent to the cylinder model metric $g^{\rm cyl}_\epsilon|_{L_\epsilon}$. To ease notation, we drop the restriction to the submanifold and simply write $g_\epsilon = g_\epsilon|_{L_\epsilon}$  and $g^{\rm cyl}_\epsilon = dx^2 + \epsilon^2 \theta^2$, where $(x,e^{i \psi})$ are the coordinates on the cylinder and $\theta = d \psi - i \mathcal{A}_1(x) dx$. Recall that by \eqref{bulk-A}, a choice of gauge was made to ensure
\[
\sup_{ \{ \rho \geq 2 \epsilon^{2/5} \} }  |\mathcal{A}|_{g_{\mathbb{T}}} \leq C \epsilon^{-2/5}, \quad \sup_{ \{ \rho \geq 2 \epsilon^{2/5} \} }  |\partial \mathcal{A}|_{g_{\mathbb{T}}} \leq C \epsilon^{-4/5}.
\]
We may use \eqref{ddag2cyl} and \eqref{bulk2cyl-L} to convert $g_\epsilon$ to $g_\epsilon^{\rm cyl}$ in estimate \eqref{ai-est}.
\begin{equation} \label{cylcollapse0}
\sup_K  | (d+d^\dagger_{g^{\rm cyl}_\epsilon} ) a_i |_{g^{\rm cyl}_\epsilon} \leq C_K (D_i^{-1} + \epsilon^{\frac{1}{5}}).
\end{equation}
We will also need
\[
\| a_i \|_{C^{1,\alpha}(K,g^{\rm cyl}_\epsilon)} \leq C_K,
\]
which similarly follows from estimate \eqref{ai-est} and using \eqref{bulk2cyl-L} to convert $g_\epsilon$ to $g_\epsilon^{\rm cyl}$. Next, we note the following lemma on collapsing cylinders.

\begin{lem} \label{lem:cylcollapse}
  Let $[a,b] \times S^1$ be a cylinder with coordinates $(x,e^{i \psi})$ equipped with the metric $g^{\rm cyl}_\epsilon = dx \otimes dx + \epsilon^2 \theta \otimes \theta$, where $\theta = d \psi - i \mathcal{A}_1(x) \, dx$. Assume the bounds
  \[
|\mathcal{A}_1| \leq C \epsilon^{-2/5}, \quad |\mathcal{A}_1'| \leq C \epsilon^{-4/5}.
  \]
  Let $a^\epsilon \in \Omega^1([a,b] \times S^1)$ a sequence of 1-forms such that
  \[
\| a^\epsilon \|_{C^{1,\alpha}(g^{\rm cyl}_\epsilon)} \leq C, \quad \| (d+d^\dagger) a^\epsilon \|_{C^{0}(g^{\rm cyl}_\epsilon)} \rightarrow 0,
\]
as $\epsilon \rightarrow 0$. Write
\begin{equation} \label{cyl-1forms}
a^\epsilon = a^\epsilon_x(x,e^{i \psi}) dx + \epsilon a^\epsilon_\psi(x,e^{i \psi}) d \psi.
  \end{equation}
  Then there exists a subsequence such that $(a^\epsilon_x, a^\epsilon_\psi)$ converges uniformly to
  \[
(a^\infty_x, a^\infty_\psi) = (c_1,c_2)
  \]
  where the $c_i$ are constants.
\end{lem}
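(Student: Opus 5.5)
The plan is to prove Lemma \ref{lem:cylcollapse} in two steps: first show the limit is independent of the circle variable, then show it is independent of $x$.

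\textbf{Step 1: write $d+d^\dagger$ in coordinates.} Work with the orthonormal coframe $e^1 = dx$, $e^2 = \epsilon\theta$ for $g^{\rm cyl}_\epsilon$. Write
\[
a^\epsilon = b_1 \, e^1 + b_2 \, e^2 .
\]
Comparing with \eqref{cyl-1forms}, we get $b_2 = a^\epsilon_\psi$ and $b_1 = a^\epsilon_x + i\epsilon \mathcal{A}_1 a^\epsilon_\psi$. The bound $|\mathcal{A}_1| \le C\epsilon^{-2/5}$ gives $|b_1 - a^\epsilon_x| \le C\epsilon^{3/5}$. So it suffices to prove convergence of $(b_1,b_2)$ to constants.

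The dual frame is $E_1 = \partial_x + i\mathcal{A}_1\partial_\psi$, $E_2 = \epsilon^{-1}\partial_\psi$. Since $de^2 = -i\epsilon \mathcal{A}_1'\, dx\wedge dx = 0$, the operator takes the form
\[
da = (E_1 b_2 - E_2 b_1)\, e^1\wedge e^2, \qquad d^\dagger a = -(E_1 b_1 + E_2 b_2) + (\text{zeroth order terms}).
\]
The zeroth order terms vanish because the volume form $\epsilon\,dx\wedge d\psi$ has constant density. Hence the hypothesis $\|(d+d^\dagger)a^\epsilon\|_{C^0}\to 0$ says that both $E_1 b_2 - E_2 b_1$ and $E_1 b_1 + E_2 b_2$ tend to zero uniformly.

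\textbf{Step 2: the limit does not depend on $\psi$.} The $C^{1,\alpha}(g^{\rm cyl}_\epsilon)$ bound controls $|E_2 b_j| = \epsilon^{-1}|\partial_\psi b_j|$ uniformly. Hence $|\partial_\psi b_j| \le C\epsilon$. It also controls $|E_1 b_j| \le C$, which gives
\[
|\partial_x b_j| \le C + |\mathcal{A}_1|\,|\partial_\psi b_j| \le C + C\epsilon^{3/5}.
\]
So $b_j$ is uniformly Lipschitz in $x$ and nearly constant in $\psi$. By Arzel\`a--Ascoli, a subsequence converges uniformly to functions $b_j^\infty(x)$ of $x$ alone. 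The same then holds for $(a^\epsilon_x, a^\epsilon_\psi)$.

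\textbf{Step 3: the limit does not depend on $x$; this is the main obstacle.} Averaging over the circle should kill the $E_2$ terms. Let $\bar b_j(x) = \frac{1}{2\pi}\int b_j\,d\psi$. Then $\int E_2 b_j\,d\psi = 0$, and
\[
\int i\mathcal{A}_1 \partial_\psi b_j\, d\psi = 0,
\]
because $\mathcal{A}_1$ depends only on $x$. So averaging the two equations gives $\bar b_2' \to 0$ and $\bar b_1' \to 0$ uniformly, in the weak sense after integrating against test functions in $x$.

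The delicate point is that the $E_2$ terms are not individually small: they are bounded but of order $\epsilon^{-1}\partial_\psi$. They must therefore be removed exactly by the circle average rather than estimated. The reliance on $\mathcal{A}_1$ being $\psi$-independent is essential here.

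Passing to the limit in the distributional sense gives $(b_j^\infty)' = 0$. Hence each $b_j^\infty$ is a constant $c_j$. This also recovers the limits of $a^\epsilon_x$ and $a^\epsilon_\psi$, which proves Lemma \ref{lem:cylcollapse}.

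The same argument on $S^1\times S^1$ proves Lemma \ref{lem:toricollapse}.
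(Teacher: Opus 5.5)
Your proof is correct and follows the paper's argument. The $C^1$ bound makes the $\partial_\psi$-derivatives of the components $O(\epsilon)$; Arzel\`a--Ascoli then gives a $\psi$-independent uniform limit; and averaging $d^\dagger a$ and $da$ over the circle removes the $\epsilon^{-1}\partial_\psi$ terms, so the limit is constant in $x$. The one difference is that you work in the closed, hence parallel, orthonormal coframe $dx,\ \epsilon\theta$ instead of coordinate components. This is a tidy simplification: it removes the $\mathcal{A}_1$-terms from $d+d^\dagger$ and makes the bound $|E_2 b_j|\le |\nabla a|$ immediate.
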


We postpone the proof of Lemma \ref{lem:cylcollapse} and continue the main argument. Applying Lemma \ref{lem:cylcollapse} to \eqref{cylcollapse0}, we conclude that $(a^{\epsilon_i}_x, a^{\epsilon_i}_\psi)$ subconverges uniformly on $K$ to a pair of constants. The condition that $a_i \in \mathcal{H}^\perp$, using \eqref{kappa-star} and $h_\epsilon \rightarrow 1$ over the support of $\zeta$, implies that these constants must be zero. Therefore $(a^{\epsilon_i}_x, a^{\epsilon_i}_\psi) \rightarrow 0$ on $K$. This contradicts \eqref{key-lowerbdd}, which gives the lower bound
\[
|a_i(x_i)|_{g^{\rm cyl}_\epsilon} \geq C^{-1}
\]
and rewriting this in the notation \eqref{cyl-1forms} is 
\[
|a^{\epsilon_i}_x(x_i)|+ |a^{\epsilon_i}_\psi(x_i)| \geq C^{-1},
\]
which gives the contradiction.
  \vspace{1ex}
  \paragraph{\bf Case 2.} Enter the bubble. If Case 1 does not hold, then $\lim \inf \rho(x_i) = 0$ and there is a subsequence along which $\rho(x_i) \rightarrow 0$. We may assume all $x_i \in \{ \rho < \rho_0 \}$ are near $p_1$. We restrict $a_i$ to the caps to define a sequence of 1-forms over $\{ \rho < \rho_0 \} \cap L_{\rm cig}$. We rescale this region by pullback via $S_\epsilon$, so that our analysis will take place on
  \[
L_{i} = \{ \rho < \rho_0 \epsilon_i^{-1} \} \cap L_{\rm cig}, 
  \]
  with sequence of 1-forms given by
  \[
\tilde{a}_i = \epsilon_i^{-\beta-1} S^*_{\epsilon_i} a_i, \quad  \tilde{a}_i \in \Omega^1(L_{i}).
  \]
The scale of $\epsilon_i^{-\beta-1}$ is included for the estimates \eqref{ai-est-tilde} below. On the rescaled spaces $(L_i,g_i)$ we use the reference metric $g_i = \epsilon_i^{-2} S_{\epsilon_i}^* g_{\epsilon_i}|_{L_{\epsilon_i}}$, which by its definition \eqref{bg-metric-regions} satisfies 
  \[
g_i = g^{{\rm cig},i} \quad \text{on } \{ \rho \leq \epsilon_i^{-3/5} \},
\]
where the cigar metric $g^{{\rm cig},i}$ is induced by a harmonic function of the form
\[
h = 1 + \epsilon_i + \sum_{a=1}^k \frac{1}{2 |x-y_a|} + \epsilon_i^2 \ell(x).
  \]
Pulling back \eqref{ai-est} gives
 \begin{equation} \label{ai-est-tilde}
\| \tilde{a}_i \|_{C^{1,\alpha}_\beta (g_i)} \leq 1, \quad \| (d+d^\dagger) \tilde{a}_i \|_{C^{0,\alpha}_{\beta-1}(g_i)} \leq D_i^{-1}.
\end{equation}
To add a bit more detail, using $S_\epsilon^* \rho = \epsilon \rho$ and $S^* g_\epsilon = \epsilon^2 g_i$, the pullback of
\[
\sup_{ \{ \rho \leq \rho_0 \}} \bigg[ \rho^{-\beta} | a_i|_{g_\epsilon} + \rho \rho^{-\beta} |\nabla a_i|_{g_\epsilon} \bigg] \leq 1 
\]
by $S_\epsilon$ is
\[
\sup_{ \{ \rho \leq \rho_0 \epsilon^{-1} \} } \epsilon^{-\beta} \bigg[ \rho^{-\beta} | S_\epsilon^* a_i|_{\epsilon^2 g_i} + \epsilon \rho \rho^{-\beta} |\nabla S_\epsilon^* a_i|_{\epsilon^2 g_i} \bigg] \leq 1 
\]
and this type of rescaling argument, when also including rescalings of the H\"older norms, gives \eqref{ai-est-tilde}. The pullback of \eqref{key-lowerbdd} is
\begin{equation} \label{key-lowerbdd-tilde}
|\tilde{a}_i( \tilde{x}_i)|_{g_i} \geq \frac{1}{2} C^{-1} \rho^\beta( \tilde{x}_i), \quad \tilde{x}_i = \epsilon_i^{-1} x_i \in L_i.
\end{equation}
We now split the analysis into two subcases.
\vspace{1ex}
\paragraph{\bf Case 2a.} $\lim \sup \rho( \tilde{x}_i) < \infty$; the cigar subcase. Let $K \subseteq L_{\rm cig}$ be a fixed compact set englobing all of the points $\tilde{x}_i \in K$. For all $i$ far enough along the sequence, we have
\[
  K \subseteq \{ \rho \leq \epsilon_i^{-3/5} \} \cap L_i, \quad g_i|_K = g^{{\rm cig},i}.
\]
The cigar metrics $g^{{\rm cig},i}$ over $K$ approach uniformly as $\epsilon_i \rightarrow 0$ the cigar metric $g^{\rm cig}$ inside multi-Taub-NUT with harmonic function
\[
h = 1 + \sum_{a=1}^k \frac{1}{2 |x-y_a|} .
  \]
By \eqref{ai-est-tilde} and the Arzela-Ascoli theorem, we may extract a subsequential limit of $\tilde{a}_i$ on $(K, g^{\rm cig})$. By exhausting with such compact sets $K$, we obtain a limit $\tilde{a}_\infty$ defined on the cigar
\[
L_{\rm cig} \subseteq (N,g^{\rm tn}) \quad (d+d^\dagger) \tilde{a}_\infty = 0.
\]
Since $|\tilde{a}_\infty| \leq \rho^\beta$ with $\beta<0$, we apply Proposition \ref{cig-no-slow-forms} to conclude $\tilde{a}_\infty \equiv 0$. This contradicts \eqref{key-lowerbdd-tilde}.

\vspace{1ex}
\paragraph{\bf Case 2b.} $\lim \sup \rho(\tilde{x}_i) = \infty$; the cone at infinity subcase. In this case let $R_i = \rho(\tilde{x}_i)$ and take a sequence $R_i \rightarrow \infty$. Rescale by $\hat{x}_i = S_{R_i^{-1}} (\tilde{x}_i)$ so that $\rho(\hat{x}_i)=1$ and assume $\hat{x}_i \rightarrow \hat{x}_\infty$. We pull-back our geometric structures by
\[
S_{R_i} : \hat{L}_i \rightarrow L_i, \quad \hat{L}_i = \{ \rho < \rho_0 (R_i \epsilon_i)^{-1} \} \cap L_{\rm cig}
\]
so that
\[
\hat{a}_i = R_i^{-\beta-1} S^*_{R_i} \tilde{a}_i, \quad  \hat{a}_i \in \Omega^1( \hat{L}_i ).
\]
We know $R_i \epsilon_i \rightarrow 0$ since $\rho(x_i) \rightarrow 0$, and so $\hat{L}_i$ expands to all of $L_{\rm cig}$. The geometry on $\hat{L}_i$ will be given by $\hat{g}_i = R_i^{-2} S^*_{R_i} g_i$, so that pulling back \eqref{ai-est-tilde} gives
\begin{equation} \label{case2b1}
\| \hat{a}_i \|_{C^{1,\alpha}_\beta(\hat{g}_i)} =1, \quad \| (d + d^\dagger) \hat{a}_i \|_{C^{0,\alpha}_{\beta-1}(\hat{g}_i)} \leq D_i^{-1}, \quad |\hat{a}_i( \hat{x}_i)|_{\hat{g}_i} \geq C^{-1}.
\end{equation}
The metric $\hat{g}_i$ is modelled on a cylinder. Indeed, using notation
\[
\hat{g}_i = (R_i \epsilon_i)^{-2} S_{R_i \epsilon_i}^* g_{\epsilon_i}|_{L_{\epsilon_i}}, \quad g^{\rm cyl}_{i} = dx^2 + R_i^{-2} d \theta^2,
\]
pulling back \eqref{long-region-cyl} by $S_{R_i \epsilon_i}$ and restricting to the submanifold where second fundamental form terms are controlled by \eqref{long-region-A}, gives the estimates
\begin{equation} \label{long-region-cyl2}
\begin{aligned}
\sup_{\{  R_i^{(-1+\tau)/2} \le \rho \le  (R_i \epsilon_i)^{-1} \rho_0 \}}
    |\hat{g}_i - g_i^{\rm cyl}|_{g_i^{\rm cyl}}
    &\le C \bigl(\epsilon_i^{\frac{3}{5}} + R_i^{-(1+\tau)/2}\bigr), \\[6pt]
\sup_{\{  R_i^{(-1+\tau)/2} \le \rho \le (R_i \epsilon_i)^{-1} \rho_0 \}}
    |\nabla_i^{\rm cyl} \hat{g}_i|_{g_i^{\rm cyl}}
    &\le C \bigl( \epsilon_i^{\frac{1}{5}} (\epsilon_i R_i) +  R_i^{-\tau}\bigr).
\end{aligned}
\end{equation}
Using these, we may convert \eqref{case2b1} to cylindrical geometry. Let $K \subseteq \hat{L}_i \backslash \{ o \}$ be a compact set, where $o=y_1$ is the tip of the cigar. For $0<\tau<1$, then eventually along the sequence the bounds \eqref{long-region-cyl2} apply on $K$, and in a similar way to \eqref{cylcollapse0} we can show
\begin{equation} \label{case2b2}
\| \hat{a}_i \|_{C^{1,\alpha}(K,g^{\rm cyl}_i)} \leq C_K, \quad \| (d + d^\dagger) \hat{a}_i \|_{C^0(K,g^{\rm cyl}_i)} \rightarrow 0.
\end{equation}
By Lemma \ref{lem:cylcollapse}, the sequence uniformly subconverges on $K$ according to
\[
  \hat{a}_i = a^i_x dx + R_i^{-2} a^i_\psi d \psi, \quad (a^i_x,  a^i_\psi ) \rightarrow (c_1,c_2).
\]
By the decay $|\hat{a}_i|_{g^{\rm cyl}_{i}} \leq C \rho^\beta$ with $\beta<0$, we have
\[
|a^i_x|+|a^i_\psi| \leq C \rho^\beta.
\]
Letting $K$ exhaust $L_{\rm cig} \backslash \{ o \}$ implies $c_1=c_2=0$. This contradicts \eqref{case2b1}. 

\end{proof}

\subsection{Cylindrical collapse} \label{sec:cylcollapse} In this section, we give the proof of Lemma \ref{lem:cylcollapse}. Essentially the same proof applies to Lemma \ref{lem:toricollapse}, and these lemmas are so similar that we only give the full details for Lemma \ref{lem:cylcollapse}.

\subsubsection{Setup}
Recall the setup. Let $K=[a,b] \times S^1$ be a cylinder with coordinates $(x,e^{i \psi})$ equipped with the metric
\[
g^{\rm cyl}_\epsilon = \begin{bmatrix} g_{xx} & g_{x \psi} \\ g_{\psi x} & g_{\psi \psi} \end{bmatrix} = \epsilon^2 \begin{bmatrix} \epsilon^{-2} + b^2 & b \\  b & 1
  \end{bmatrix}
\]
with $b(x)$ a real function satisfying
\[
\epsilon b(x) \rightarrow 0, \quad \epsilon b'(x) \rightarrow 0
\]
uniformly. The inverse metric is
\[
[g^{\rm cyl}_\epsilon]{}^{-1} =  \begin{bmatrix} 1 & -b \\ -b & \epsilon^{-2} + b^2\end{bmatrix}.
\]
We are given a sequence of 1-forms $a^\epsilon \in \Omega^1(K)$ satisfying
\begin{equation} \label{ai-cyl}
\| a^\epsilon \|_{C^{1,\alpha}(K, g^{\rm cyl}_{\epsilon})} \leq C, \quad \| (d+d^\dagger) a^\epsilon \|_{C^{0}(K,g^{\rm cyl}_{\epsilon})} \rightarrow 0.
\end{equation}
We write a 1-form $a \in \Omega^1(L_{\rm cyl})$ as
\[
a = a_x(x,e^{i \psi}) dx + \epsilon a_\psi(x,e^{i \psi}) d \psi,
\]
where the factor of $\epsilon$ is introduced so that
\begin{align*}
  |a|^2_{g^{\rm cyl}_\epsilon} &=   |a_x|^2 +  (1+\epsilon^2 b^2)|a_\psi|^2 - 2 \epsilon b a_x a_\psi\\
  &\geq \frac{1}{2} |a_x|^2 + \frac{1}{2} |a_\psi|^2
\end{align*}
once $\epsilon$ is small enough. The goal is to find a subsequence of $(a_x^\epsilon,a_\psi^\epsilon)$ converging to a pair of constants.

\subsubsection{Extracting a subsequence} With these conventions, we compute $d^\dagger a = - \star d \star a$ and $da$.
\begin{align}
  d^\dagger a &= - \epsilon^{-1} \partial_\psi a_\psi - (\partial_x - b \partial_\psi) (a_x+\epsilon b a_\psi),  \label{d-dag-a} \\
  |d a|^2_{g_\epsilon} &= (1 + \epsilon^2 b^2)( \partial_x a_\psi - \epsilon^{-1} \partial_\psi a_x)^2 . \label{d-a}
\end{align}
Next, expanding the norm of $\nabla a$ gives the following estimate on partial derivatives of components:
\begin{equation} \label{osc-est}
 |\partial_\psi a_x|^2 + |\partial_\psi a_\psi |^2 \leq  2 \epsilon^2  |\nabla a|^2_{g^{\rm cyl}_\epsilon},
\end{equation}
for small $\epsilon$. Indeed,
 \begin{align*}
 \epsilon^2  g^{ij} g^{\psi \psi} \nabla_\psi a_i \nabla_\psi a_j &= (1 + \epsilon^2 b^2) |\partial_\psi a_x|^2 + (1+\epsilon^2 b^2)^2 |\partial_\psi a_\psi|^2 \\
   &- 2  (b + \epsilon^2 b^3) \epsilon \partial_\psi a_x \partial_\psi a_\psi \\
   &\geq \frac{3}{4} |\partial_\psi a_x|^2 + \frac{3}{4}|\partial_\psi a_\psi|^2 - \epsilon |b + \epsilon^2 b^3| \left(|\partial_\psi a_x|^2 + |\partial_\psi a_\psi|^2 \right)
\end{align*}
once $\epsilon$ is small enough and the full $\epsilon^2  |\nabla a|^2_{g^{\rm cyl}_\epsilon}$ contains this term and more.

By estimate \eqref{ai-cyl} and the Arzela-Ascoli theorem, after taking a subsequence, $(a_x^\epsilon, a_\psi^\epsilon)$ converges uniformly on $K$ to a limiting pair $(a_x^\infty, a_\psi^\infty)$. By estimates \eqref{osc-est} and \eqref{ai-cyl},
\[
\sup_K \bigg[ |\partial_\psi a^\epsilon_x| + |\partial_\psi a^\epsilon_\psi| \bigg] \leq C \epsilon
\]
and sending $\epsilon \rightarrow 0$ proves that the limiting components $a_x^\infty(x)$ and $a_\psi^\infty(x)$ do not depend on the circle coordinates $e^{i \psi}$. Next, we integrate both sides of \eqref{d-dag-a} over the circle:
\[
\int_0^{2 \pi} d^\dagger a^\epsilon \, d \psi = \int_0^{2 \pi} -\partial_x (a^\epsilon_x + \epsilon b a_\psi) \, d \psi.
\]
Take the limit of this integral and use $d^\dagger a^\epsilon \rightarrow 0$.
\[
0 = \int_0^{2 \pi} \partial_x a^\infty_x \, d \psi.
\]
Since $a_x^\infty$ does not depend on $e^{i \psi}$, this identity is
\[
0 = {d \over dx} a_x^\infty.
\]
Similarly, \eqref{d-a} implies
\[
\int_0^{2 \pi} ( \partial_x a^\epsilon_\psi - \epsilon^{-1} \partial_\psi a^\epsilon_x) d \psi \rightarrow 0
\]
and so
\[
0 = {d \over dx} a_\psi^\infty.
\]
Therefore the limit of component functions is
\[
(a^\infty_x, a^\infty_\psi) = (c_1,c_2)
\]
for constants $c_1$, $c_2$. This proves Lemma \ref{lem:cylcollapse}.


\subsection{Control of the bad directions}

 In Lemma \ref{lem:key-estimate}, we derived
\[
\| a^\perp \|_{C^{1,\alpha}_\beta} \leq C \| (d+d^\dagger) a^\perp \|_{C^{0,\alpha}_{\beta-1}}, \quad a^\perp \in \mathcal{H}^\perp
\]
over the elongated sphere $(L_\epsilon,g_\epsilon)$, where $L_\epsilon$ is the 2-sphere connecting a pair of Taub-NUT bubbles and $g_\epsilon = g_\epsilon|_{L_\epsilon}$ is the induced metric from the ambient reference geometry $(M_\epsilon,g_\epsilon)$. Here $\mathcal{H}$ is the space of 1-forms spanned by $\kappa_1 = \zeta(x) dx$ and $\kappa_2 = \zeta(x) \theta$, where $\theta = d \psi - i \mathcal{A}_1(x) dx$, perdendicular is with respect to the $L^2(g_\epsilon)$ pairing, and $\zeta(x)$ is a bump function such that
 \[
      {\rm supp} \, \zeta \subseteq \{ \rho = 1 \},
    \]
    so that $\zeta$ localizes us to the middle of the cylindrical part of $L_\epsilon$. Our task for this section is to bound the degenerating constants $C(\epsilon)$ in the elliptic estimate
\[
\| a \|_{C^{1,\alpha}_\beta} \leq C(\epsilon) \| (d+d^\dagger) a \|_{C^{0,\alpha}_{\beta-1}}, \quad a \in \Omega^1(L_\epsilon)
\]
as $\epsilon \rightarrow 0$. That is, we now consider all $a \in \Omega^1(L_\epsilon)$ rather than only the subspace $\mathcal{H}^\perp$. To work with these, we decompose an arbitrary element $a \in \Omega^1(L_\epsilon)$ orthogonally as
\begin{equation} \label{a-ortho}
a = a^\perp + \lambda_1 \kappa_1 + \lambda_2 \kappa_2, \quad \lambda_i = \frac{ \langle a , \kappa_i \rangle_{L^2}}{\| \kappa_i \|^2_{L^2}}.
\end{equation}
We can apply the estimate of Lemma \ref{lem:key-estimate}.
\[
\| a \|_{C^{1,\alpha}_\beta} \leq C \| (d+d^\dagger) a^\perp \|_{C^{0,\alpha}_{\beta-1}}  + |\lambda| \|\kappa \|_{C^{1,\alpha}}.
\]
Here we write $\lambda=(\lambda_1,\lambda_2)$, $\kappa=(\kappa_1,\kappa_2)$, and remark that weights play no role in estimates for $\kappa$. Applying the de Rham operator operator to the decomposition \eqref{a-ortho} of $a$ gives
\begin{equation} \label{ddagaperp}
(d+d^\dagger)a = (d+d^\dagger) a^\perp + \lambda_i (d+d^\dagger) \kappa_i,
  \end{equation}
and so
\begin{equation} \label{last-step}
\| a \|_{C^{1,\alpha}_\beta} \leq  C \bigg[ \| (d+d^\dagger) a \|_{C^{0,\alpha}_{\beta-1}}  + |\lambda| \|\kappa \|_{C^{1,\alpha}} \bigg].
  \end{equation}
The remaining step is to bound $|\lambda_i|$. For this, we solve the equation
  \begin{equation} 
d^\dagger \phi = \kappa_2, \quad \phi \in \Omega^2(L_\epsilon) \cap {\rm Im} \, d.
\end{equation}
which is possible since $H^1(L_\epsilon)=0$ and $d^\dagger \kappa_2 = 0$. We note that $\kappa_2= \zeta(x) \theta$ is co-closed with respect to $g_\epsilon$ since over the support of $\kappa_i$, the reference metric is
    \[
      g_\epsilon = h_\epsilon(x) dx \otimes dx + \epsilon^2 h_\epsilon(x)^{-1} \theta \otimes \theta,
    \]
so coclosedness can be verified by explicit calculation \eqref{kappa-star}. Pairing \eqref{ddagaperp} with $\phi$ gives
\begin{equation} \label{kappa-potential}
\langle da, \phi \rangle_{L^2} = \lambda_2 \| \kappa_2 \|^2_{L^2}
\end{equation}
since $a^\perp \perp \kappa_2$. Similarly, we solve
\begin{equation}
d \varphi = \kappa_1, \quad \varphi \in C^\infty(L_\epsilon), \quad \int_{L_\epsilon} \varphi \, d {\rm vol}_{g_\epsilon} = 0,
\end{equation}
and derive
\[
\langle d^\dagger a, \varphi \rangle_{L^2} = \lambda_1 \| \kappa_1 \|^2_{L^2}.
  \]
Therefore
\[
|\lambda_1| \leq \frac{1}{\| \kappa_1 \|^2_{L^2}} \int_{L_\epsilon} |d^\dagger a| |\varphi|, \quad |\lambda_2| \leq \frac{1}{\| \kappa_2 \|^2_{L^2}} \int_{L_\epsilon} |da| |\phi|.
\]
In this subsection we omit the volume form $d {\rm vol}_{g_\epsilon}$ under integrals. We use these estimates to bound $|\lambda_i|$. We start with the bound for $\lambda_1$. We estimate by H\"older's inequality
\begin{align*}
\| \kappa_1 \|^2_{L^2} |\lambda_1|  &\leq \| d^\dagger a \|_{C^0_{\beta-1}} \int_{L_\epsilon} \rho^{-1} \rho^{\beta}  |\varphi| \\
                                                       &\leq \| (d+d^\dagger)a \|_{C^0_{\beta-1}} \bigg( \int_{L_\epsilon} \rho^{-1+b} |\varphi|^2 \bigg)^{\frac{1}{2}} \bigg( \int_{L_\epsilon} \rho^{-1-b+2 \beta} \bigg)^{\frac{1}{2}} \\
  &\leq C \epsilon^{-\frac{b}{2}+\beta} \| (d+d^\dagger)a \|_{C^0_{\beta-1}} \bigg( \int_{L_\epsilon} \rho^{-1+b} |\varphi|^2 \bigg)^{\frac{1}{2}},
\end{align*}
since ${\rm vol}(L_\epsilon) = O(\epsilon)$ and $\rho \geq \epsilon$. Next, we apply the weighted Poincar\'e inequality
  \begin{equation} \label{weighted-poincare}
\int_{L_\epsilon} \rho^{-1+b}  |\varphi|^2 \leq C \int_{L_\epsilon} \rho^{1+b} |d \varphi|^2, \quad  0<b \ll 1,
  \end{equation}
 which holds for all $\varphi \in C^\infty(L_\epsilon)$ with $\int_{L_\epsilon} \varphi \, d {\rm vol} = 0$. Assuming \eqref{weighted-poincare}, which will be proved in the following section, we use $d \varphi = \kappa_1$ to conclude
  \[
\| \kappa_1 \|^2_{L^2}  |\lambda_1| \leq C \epsilon^{-\frac{b}{2}+ \beta} \| (d+d^\dagger)a \|_{C^0_{\beta-1}} \bigg( \int_{L_\epsilon} \rho^{1+b} |\kappa_1|^2 \bigg)^{\frac{1}{2}}.
  \]
  and hence, since $\rho \leq C$,
  \[
 |\lambda_1|  \leq \frac{C}{\| \kappa_1 \|_{L^2}} \epsilon^{-\frac{b}{2}+ \beta} \| (d+d^\dagger)a \|_{C^0_{\beta-1}}.
\]
A completely analogous argument, using in place of \eqref{weighted-poincare} the equivalent inequality in dimension two
\begin{equation}
\int_{L_\epsilon} \rho^{-1+b}  |\phi|^2 \leq C \int_{L_\epsilon} \rho^{1+b} |d^\dagger \phi|^2, \quad  0<b \ll 1,
  \end{equation}
 which holds for all $\phi \in \Omega^2(L_\epsilon)$ with $\int_{L_\epsilon} \phi = 0$, leads to
    \[
 |\lambda_2|  \leq \frac{C}{\| \kappa_2 \|_{L^2}} \epsilon^{-\frac{b}{2}+ \beta} \| (d+d^\dagger)a \|_{C^0_{\beta-1}}.
\]
We now substitute these bounds for $|\lambda|$ into \eqref{last-step}.
\[
\| a \|_{C^{1,\alpha}_\beta} \leq  C \bigg[1  +  \frac{\|\kappa_i \|_{C^{1,\alpha}}}{\| \kappa_i \|_{L^2}} \epsilon^{-\frac{b}{2}+ \beta} \bigg] \| (d+d^\dagger) a \|_{C^{0,\alpha}_{\beta-1}} .
\]
Over the support of $\kappa_i$, the reference metric $g_\epsilon$ is by \eqref{geps2cyl} uniformly equivalent to $g^{\rm cyl}_\epsilon = dx^2 + \epsilon^2 \theta^2$. Since the $\kappa_i$ are fixed compactly supported forms in a region with volume collapsing as $d {\rm vol}_{g_\epsilon} = \epsilon dx \wedge d \psi$, the ratio degenerates as
\[
\frac{\|\kappa_i \|_{C^{1,\alpha}}}{\| \kappa_i \|_{L^2}} \leq C \epsilon^{-\frac{1}{2}}.
\]
We conclude that 
\[
\| a \|_{C^{1,\alpha}_\beta} \leq  C \epsilon^{-\frac{1}{2}-\frac{b}{2}+ \beta} \| (d+d^\dagger) a \|_{C^{0,\alpha}_{\beta-1}}.
\]
for all $a \in \Omega^1(L_\epsilon)$. We take $0< |\beta|, b \ll 1$ so that the exponent on $\epsilon$ is a small perturbation of minus one half.

\subsection{Weighted Poincar\'e inequality}
In this subsection, we prove the existence of a constant $C>1$ such that
\begin{equation} \label{weighted-poincare2}
\int_{L_\epsilon} \rho^{-1} \rho^{b} |f|^2 \, d {\rm vol}_{g_\epsilon} \leq C \int_{L_\epsilon} \rho^{1+b} |\nabla f|^2 \, d {\rm vol}_{g_\epsilon}, \quad 0<b \ll 1,
\end{equation}
for all $\epsilon>0$ small enough and all scalar functions $f \in C^\infty(L_\epsilon)$ such that $\int_{L_\epsilon} f \, d {\rm vol}_{g_\epsilon}= 0$. We prove this by contradiction. Suppose \eqref{weighted-poincare2} fails and there exists a sequence $f_i \in C^\infty(L_{\epsilon_i})$ such that
\begin{equation} \label{weighted-poin-contra}
\int_{L_i} \rho^{-1+b} |f_i|^2 \, d {\rm vol}_{g_\epsilon} = 1, \quad \int_{L_i} \rho^{1+b} |\nabla f_i|^2 \, d {\rm vol}_{g_\epsilon} \rightarrow 0.
\end{equation}
We want to obtain a contradiction from here.

\subsubsection{Extracting a limit on compact sets}
To take limits, first we fix a compact set $K = \{ \rho \geq \delta \} \subseteq L_\epsilon$. Once $\epsilon$ is small enough, we view $K$ as a cylinder $K = [a,b] \times S^1$ with coordinates $(x,e^{i \psi})$. The metric $g_\epsilon$, according to \eqref{geps2cyl}, is uniformly equivalent to a collapsing cylinder:
\begin{equation} \label{geps2cylcyl}
 C_\delta^{-1} ( dx^2 + \epsilon^2 d \psi^2) \leq g_\epsilon \leq  C_\delta ( dx^2 + \epsilon^2 d \psi^2), \quad {\rm on} \ \{ \rho \geq \delta \},
\end{equation}
and $d {\rm vol}_{g_\epsilon}$ is $\epsilon dx d \psi$. To take a limit of the $f_i$ on $K$ while the circle fibers collapse, we will work on the base $[a,b]$ and pushforward $f_i$ to the base.
\[
\bar{f}_i : [a,b] \rightarrow \mathbb{R}, \quad \bar{f}_i(x) = \int_{S^1} \sqrt{\epsilon} f_i(x,e^{i \psi}) d \psi.
\]
The contradiction will come from showing that $\bar{f}_i$ converges to zero on any such compact set $K$, while there exists a compact set $K$ where $\bar{f}_i$ cannot go to zero. Let us start by estimating the $W^{1,2}$ norm of $\bar{f}_i$. First, H\"older's inequality
\[
|\partial_x \bar{f}_i|^2 = \bigg| \int_{S^1} \sqrt{\epsilon} \partial_x f d \psi \bigg|^2 \leq 2 \pi \int_{S^1} \epsilon |\partial_x f|^2 d \psi,
\]
implies
\[
\int_{[a,b]} | \partial_x \bar{f}_i|^2 dx \leq 2 \pi \int_{[a,b] \times S^1} \bigg[ |\partial_x f|^2 + \epsilon^{-2} |\partial_\psi f|^2 \bigg] \epsilon d x d \psi
\]
and so, after adding the metric via \eqref{geps2cylcyl}, we obtain
\[
\int_{[a,b]} | \partial_x \bar{f}_i |^2 dx \leq C_K \int_{L_i} \rho^{1+b} |\nabla f_i|^2_{g_\epsilon} d {\rm vol}_{g_\epsilon} 
\]
since $\{\rho \geq \delta \}$ on $K$. It follows from \eqref{weighted-poin-contra} that
\begin{equation} \label{L2-grad}
\int_{[a,b]} | \partial_x \bar{f}_i |^2 dx \rightarrow 0, \quad i \rightarrow \infty.
\end{equation}
We also have $L^2$ bounds on $\bar{f}_i$, since
\[
\int_{[a,b]} | \bar{f}_i |^2 dx \leq \int_{[a,b] \times S^1}  |f_i|^2 \epsilon \, dx d \psi \leq C_K \int_{L_i} \rho^{-1} \rho^{b} |f_i|^2 \, d {\rm vol}_{g_\epsilon} = C_K
\]
by \eqref{weighted-poin-contra}. It follows from \eqref{L2-grad} and this $L^2$ bound that after a subsequence $\bar{f}_i$ converges to a constant $c \in \mathbb{R}$ in $L^2([a,b])$.

\subsubsection{Showing the limit is zero}
Next, we use the condition $\int f_i = 0$ to show $c=0$. Starting from 
\[
\int_{ \{ \rho \geq \delta \} } f_i \, d {\rm vol}_{g_\epsilon} + \int_{ \{ \rho < \delta \}} f_i \, d {\rm vol}_{g_\epsilon} = 0
\]
we can estimate
\begin{align}
  \bigg| \int_{ \{ \rho \geq \delta \} } \epsilon^{-1/2} f_i d {\rm vol}_{g_\epsilon} \bigg| &\leq \int_{ \{ \rho < \delta \}} \epsilon^{-1/2} |f_i| d {\rm vol}_{g_\epsilon}  \nonumber \\
                                                                          &\leq \epsilon^{-\frac{1}{2}} \bigg( \int_{ \{ \rho < \delta \}} |f_i|^2 \rho^{-1+b} d {\rm vol}_{g_\epsilon} \bigg)^{\frac{1}{2}}  \bigg( \int_{ \{ \rho < \delta \} }  \rho^{1-b}  d {\rm vol}_{g_\epsilon} \bigg)^{\frac{1}{2}} \nonumber \\
  &\leq C \delta^{\frac{1-b}{2}}. \label{poincare-int-zero}
\end{align}
We now keep track $\delta>0$ in the definition of $\bar{f}_i$ and write $\bar{f}_{i ,\delta}: [a_\delta,b_\delta] \rightarrow \mathbb{R}$, $\{ \rho \geq \delta \} = [a_\delta,b_\delta] \times S^1$ where $[a_\delta,b_\delta] \subseteq [-L,L]$ and $[a_{\delta_1}, b_{\delta_1}] \subset [a_{\delta_2},b_{\delta_2}]$ when $\delta_2 < \delta_1$. After extracting a diagonal subsequence, there exists a constant $c$ such that for any $\delta>0$, then $\bar{f}_{i,\delta} \rightarrow c$ in $L^2$ as $i \rightarrow \infty$. We also know from \eqref{poincare-int-zero} that
\[
\bigg| \int_{[a_\delta,b_\delta]} \bar{f}_{i,\delta} dx \bigg| \leq C \delta^{\frac{1-b}{2}}.
\]
Send $i \rightarrow \infty$ followed by $\delta \rightarrow 0$ to see that $c=0$. Therefore for any given $\delta>0$, we have that $\bar{f}_{i,\delta} \rightarrow 0$ in $L^2([a_\delta,b_\delta])$ as $i \rightarrow \infty$.

\subsubsection{Contradiction}
The main step of the proof is to contradict $\bar{f}_i \rightarrow 0$ and \eqref{weighted-poin-contra}. For this purpose, the key estimate is the following: there exists $\iota>0$ such that
\begin{equation} \label{no-mass-lost}
\limsup_{i \rightarrow \infty} \int_{ \{ \rho < \delta \}} |f_i|^2 \rho^{-1+b}  d {\rm vol}_{g_\epsilon} \leq C \delta^\iota.
\end{equation}
We postpone the proof of \eqref{no-mass-lost} and obtain a contradiction from here. Combining \eqref{weighted-poin-contra} and \eqref{no-mass-lost} implies the no-mass-lost type estimate
\begin{equation} \label{no-mass-lost1}
\liminf_{i \rightarrow \infty} \int_{ \{ \rho \geq \delta \}} |f_i|^2 \rho^{-1+b} d {\rm vol}_{g_\epsilon} \geq 1- C \delta^\iota.
\end{equation}
We use this to contradict $\bar{f}_i \rightarrow 0$. Let $K=\{ \rho \geq \delta \} =[a,b] \times S^1$ and
\[
\mu_i = \frac{1}{|K|} \int_{[a,b]} \bar{f}_i \, dx.
\]
Since $\bar{f}_i \overset{L^2}{\rightarrow} 0$, we know that $\mu_i \rightarrow 0$ as $i \rightarrow \infty$. We have
\begin{align}
  \int_{ \{ \rho \geq \delta \}} |f_i|^2 \rho^{-1+b} d {\rm vol}_{g_\epsilon} &\leq C_\delta \int_K |f_i|^2 \, \epsilon dx d \psi \nonumber\\
                                                                                    &= C_\delta \bigg( \int_K |\sqrt{\epsilon} f_i-\mu_i|^2 \,  dx d \psi + \int_K |\mu_i|^2 \,  dx d \psi   \bigg).  
\end{align}
The usual Poincar\'e inequality on $K$ states
\begin{align}
  \int_{K} |\sqrt{\epsilon} f_i - \mu_i|^2 &\leq C_K \int_K \bigg[ |\partial_x f_i|^2 + |\partial_\psi f_i|^2 \bigg] \, \epsilon dx d \psi \nonumber\\
                                            &\leq C_K \int_K \bigg[ |\partial_x f_i|^2 + \epsilon^{-2} |\partial_\psi f_i|^2 \bigg] \, \epsilon dx d \psi \nonumber\\
  &\leq C_K \int_{L_\epsilon} \rho^{1+b} |\nabla f|^2_{g_\epsilon} d {\rm vol}_{g_\epsilon}.
\end{align}
This goes to zero by \eqref{weighted-poin-contra}. Therefore, for each fixed $\delta>0$, letting $i \rightarrow \infty$ gives
\[
\lim_{i \rightarrow \infty} \int_{ \{ \rho \geq \delta \}} |f_i|^2 \rho^{-1+b}  d {\rm vol}_{g_\epsilon} =0 .
\]
This contradicts \eqref{no-mass-lost1} once $\delta>0$ is chosen small enough.

\subsubsection{No-mass-lost estimate}
The Poincar\'e inequality therefore follows from the no-mass-lost estimate \eqref{no-mass-lost}. Each component of the set $\{ \rho < \delta \}$ when $0< \delta< \rho_0$ may be identified as within the local model $L_{\rm cig}$. To prove \eqref{no-mass-lost}, we will use the following estimate:
\begin{equation} \label{Poincare-ALF0}
\int_{L_{{\rm cig},\epsilon}} | \varphi |^2 \rho^{-1+b} d {\rm vol}_{g_\epsilon} \leq C \int_{L_{{\rm cig},\epsilon}} |\nabla \varphi|^2_{g_\epsilon} \rho^{1+b} d {\rm vol}_{g_\epsilon}
\end{equation}
for all $\varphi \in C^\infty(L_{{\rm cig},\epsilon})$ such that
\[
 {\rm supp} \, \varphi \subseteq \{ \rho < \rho_0 \} \cap L_{{\rm cig},\epsilon}.
\]
We assume \eqref{Poincare-ALF0}, which will be proved in the following subsection, and now prove \eqref{no-mass-lost}. Let $\eta: \mathbb{R} \rightarrow [0,1]$ be a fixed cutoff function with $\eta|_{\{x \leq 1 \}} \equiv 1$ and $\eta|_{ \{x \geq 2 \}} \equiv 0$ and $\eta_\delta(x) = \eta(\delta^{-1} \rho)$. Then applying \eqref{Poincare-ALF0} to $\eta_\delta f_i$ gives
\begin{align}
  \int_{L_{\rm cig}} |f_i|^2 \eta_\delta^2 \rho^{-1+b} \, d {\rm vol}_{g_\epsilon} &\leq C \int_{L_{\rm cig}} |\nabla (\eta_\delta f)|^2 \rho^{1+b} \, d {\rm vol}_{g_\epsilon} \nonumber\\
  &\leq C \int_{L_{\rm cig}} |f_i|^2 |\nabla \eta_\delta|^2 \rho^{1+b} \, d {\rm vol}_{g_\epsilon} + C \int_{L_{\rm cig}} \eta_\delta^2 | \nabla f_i|^2 \rho^{1+b} \, d {\rm vol}_{g_\epsilon} .
\end{align}
Since $|\nabla \eta_\delta|= \delta^{-2} |\eta'| |\nabla \rho|$ and $|\nabla \rho|_{g_\epsilon} \leq C$ independently of $\epsilon$, we have
\begin{align} \label{decay-setup}
  & \ \int_{\{ \rho \leq \delta \}} |f_i|^2 \rho^{-1+b} \, d {\rm vol}_{g_\epsilon} \nonumber\\
  &\leq C \int_{\{\delta \leq \rho \leq 2 \delta \}} |f_i|^2 \rho^{-1+b} \, d {\rm vol}_{g_\epsilon} +  C \int_{\{ \rho \leq 2 \delta \}} |\nabla f_i|^2 \rho^{1+b} \, d {\rm vol}_{g_\epsilon}. 
\end{align}
Next, we use the following standard iteration lemma.

\begin{lem} [Lemma 4.19 in \cite{HanLin}]
  Let $\omega$ and $\sigma$ be nondecreasing functions on $(0,1]$, and let $0< \gamma < 1$. Suppose that for all $r \in (0, \frac{1}{2}]$, then
\[
\omega(r) \leq \gamma \omega(2 r) + \sigma(2 r).
\]
Then for all $0< r \leq \frac{1}{2}$, we have
\[
\omega(r) \leq C \big( r^\iota \omega(1) + \sigma(1) \big)
\]
for $C>0$ and $\iota>0$ depending only on $\gamma$.
\end{lem}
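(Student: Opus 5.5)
The plan is to iterate the hypothesis dyadically. Fix $r\in(0,\tfrac12]$ and apply the inequality repeatedly to obtain, for every integer $k\ge 1$ with $2^{k}r\le 1$,
\[
\omega(r)\le \gamma^{k}\omega(2^{k}r)+\sum_{j=0}^{k-1}\gamma^{j}\sigma(2^{j+1}r).
\]
The induction step substitutes the hypothesis at radius $2^{k}r$ into the term $\omega(2^k r)$. This is legitimate as long as $2^{k}r\le\tfrac12$, so that $2^{k+1}r\le 1$ stays in the domain.

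Next, choose $k$ to be the largest integer with $2^{k}r\le 1$. Then $2^{k}r\in(\tfrac12,1]$, and monotonicity of $\omega$ gives $\omega(2^kr)\le\omega(1)$. For the error sum, monotonicity of $\sigma$ gives $\sigma(2^{j+1}r)\le\sigma(1)$ for each $j\le k-1$, since $2^{j+1}r\le 2^{k}r\le1$. Hence
\[
\sum_{j=0}^{k-1}\gamma^j\sigma(2^{j+1}r)\le \frac{\sigma(1)}{1-\gamma}.
\]

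It remains to convert $\gamma^k$ into a power of $r$. Since $2^{k+1}r>1$, we have $k> \log_2(1/r)-1$. Setting $\iota=\log_2(1/\gamma)>0$, which is positive because $0<\gamma<1$, we get
\[
\gamma^{k}=2^{-k\iota}\le 2^{\iota}\, r^{\iota}.
\]
Combining these gives $\omega(r)\le C\bigl(r^\iota\omega(1)+\sigma(1)\bigr)$ with $C=\max(2^\iota,(1-\gamma)^{-1})$, which depends only on $\gamma$. If $r\in(\tfrac12,1]$ were needed, it follows trivially from monotonicity.

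The only delicate point is bookkeeping. We must check that every radius at which the hypothesis is invoked lies in $(0,\tfrac12]$, and that every $\omega$- or $\sigma$-argument stays in $(0,1]$. No analytic obstacle is expected, since nonnegativity of $\omega$ and $\sigma$ is not even required beyond the monotonic bounds used. In the application to \eqref{decay-setup}, the hypothesis of the lemma comes from taking $\omega(r)=\int_{\{\rho\le r\}}|f_i|^2\rho^{-1+b}$. The factor $\gamma<1$ is produced by hole-filling: add $C\omega(r)$ to both sides and divide by $1+C$, which gives $\gamma=C/(1+C)$.
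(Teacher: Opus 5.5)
Your dyadic iteration and the bookkeeping of radii are correct. This is the standard proof of the Han--Lin iteration lemma; the paper gives no proof of its own and only cites it. However, your assertion that nonnegativity of $\omega$ and $\sigma$ is not needed is false. It is used in three places:
\begin{enumerate}
\item the step $\gamma^k\omega(1)\le 2^\iota r^\iota\omega(1)$ requires $\omega(1)\ge 0$;
\item the step $\sum_{j=0}^{k-1}\gamma^j\sigma(1)\le \sigma(1)/(1-\gamma)$ requires $\sigma(1)\ge 0$;
\item the final absorption $2^\iota r^\iota\omega(1)+(1-\gamma)^{-1}\sigma(1)\le C\bigl(r^\iota\omega(1)+\sigma(1)\bigr)$ with $C=\max(2^\iota,(1-\gamma)^{-1})$ requires both, as does your remark about $r\in(\tfrac12,1]$.
\end{enumerate}

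In fact the lemma as literally stated is false for signed functions. Take $\omega\equiv 1$, $\sigma\equiv 1-\gamma$: this satisfies the hypothesis with equality, and letting $r\to 0$ in the conclusion forces $C\ge (1-\gamma)^{-1}$. Now take $\omega\equiv -1$, $\sigma\equiv -(1-\gamma)$: this also satisfies the hypothesis with equality, but at $r=\tfrac12$ the conclusion reads $C(2^{-\iota}+1-\gamma)\le 1$. That contradicts $C\ge(1-\gamma)^{-1}$ for every $\iota>0$.

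The repair is to add the hypothesis $\omega,\sigma\ge 0$; your argument actually only uses $\omega(1)\ge 0$ and $\sigma(1)\ge 0$. This is implicit in the standard statement, where $\omega$ is an oscillation. It also holds in the paper's application, since there $\omega(r)$ and $\sigma(r)$ are integrals of nonnegative integrands. With that hypothesis your proof is complete, with $\iota=\log_2(1/\gamma)$ and $C=\max(\gamma^{-1},(1-\gamma)^{-1})$.
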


We will apply the iteration lemma to
\[
\omega(r) = \int_{\{ \rho \leq r \}} |f_i|^2 \rho^{-1+b} \, d {\rm vol}_{g_\epsilon} , \quad \sigma(r) = \int_{\{ \rho \leq r \}} |\nabla f_i|^2 \rho^{1+b} \, d {\rm vol}_{g_\epsilon},
\]
so that \eqref{decay-setup} is written as
\[
\omega(\delta) \leq C \omega(2 \delta) - C \omega(\delta)+ C \sigma(2 \delta)
\]
or
\[
\omega(r) \leq \frac{C}{C+1} \omega(2r) + C \sigma(2r), \quad 0<r< \frac{1}{2}.
\]
By the iteration lemma and the bound $\omega(1) \leq C$, we conclude
\[
\int_{\{ \rho \leq \delta \}} |f_i|^2 \rho^{-1+b} \, d {\rm vol}_{g_\epsilon} \leq C \delta^\iota + \int_{L_\epsilon} |\nabla f_i|^2 \rho^{1+b} \, d {\rm vol}_{g_\epsilon}.
\]
Sending $i \rightarrow \infty$, the second term drops off \eqref{weighted-poin-contra} and we obtain \eqref{no-mass-lost}.

\subsubsection{Weighted Poincar\'e on cigars}

It remains to prove the local model Poincar\'e inequality \eqref{Poincare-ALF0}. Pullback the integrals in \eqref{Poincare-ALF0} by $S_\epsilon$, let $\epsilon^2 g_i = S^*_\epsilon g_\epsilon |_{L_\epsilon}$ and replace $\varphi$ with $\varphi \circ S_\epsilon$. The shift in weights $\rho^{-1+b} \mapsto \rho^{1+b}$ is there to cancel all $\epsilon$ factors. Then the estimate that we are after is this: there exists $C>1$ such that
\begin{equation} \label{no-mass-lost2}
  \int_{L_{\rm cig}} |\varphi|^2 \rho^{-1+b}  d {\rm vol}_{g_i} \leq C \int_{L_{\rm cig}} |\nabla \varphi|^2_{g_i} \rho^{1+b} d {\rm vol}_{g_i} ,
\end{equation}
for all $\varphi \in C^\infty(L_{\rm cig})$ such that
\[
{\rm supp} \, \varphi \subseteq \{ \rho < \rho_0 \epsilon^{-1} \} \cap L_{\rm cig}.
  \]
In this setup, $g_i$ is asymptotically equivalent to a cylindrical metric in the sense that
\[
|g_i - g_1^{\rm cyl} |_{g_1^{\rm cyl}} \leq \frac{1}{2} \quad \{ \rho \geq R_0 \},
\]
which follows from pulling back \eqref{geps2cyl} by $S_\epsilon$. In this subsection, we omit the volume form $d {\rm vol}_{g_i}$ under integrals as these are uniformly bounded.

Before proving \eqref{no-mass-lost2}, consider first a function $\varphi \in C^\infty(L_{\rm cig})$ which is supported on the long-range region $\{R_0 < \rho < \rho_0 \epsilon^{-1} \}$. This region can be identified with $(R_0, \rho_0\epsilon^{-1}) \times S^1$ with coordinates $(x,e^{i \psi})$ and $\rho=|x|$, and there holds a long-range Poincar\'e inequality:
\begin{equation} \label{longrangepoincare}
\int_{L_{\rm cig}} |\varphi|^2 \rho^{-1+b}  \leq C \int_{L_{\rm cig}} |\nabla \varphi|^2_{g_i} \rho^{1+b}, \quad \varphi \in C^\infty_c(\{R_0 < \rho < \rho_0 \epsilon^{-1} \}).
\end{equation}
Estimate \eqref{longrangepoincare} follows from applying Hardy's inequality
\[
\int_0^\infty g^2 |x|^{-1+b} \, dx \leq \frac{4}{b^2} \int_0^\infty (\partial_x g)^2 |x|^{1+b} \, dx, \quad g(x) \in C^\infty_c(0,\infty),
\]
to $g(x) = \varphi(x,e^{i\psi})$ with fixed circle angle, integration along the circle, and converting the cylindrical metric to $g_i$.

From a more general $\varphi \in C^\infty(L_{\rm cig})$ compactly supported in $\{ \rho < \rho_0 \epsilon^{-1} \}$, we break it into two pieces.
\[
\varphi = \eta \varphi + (1-\eta) \varphi, \quad \eta|_{\rho \leq R_0} \equiv 0, \quad \eta|_{\rho \geq 2 R_0} \equiv 1.
\]
We can then estimate
\[
\int_{L_{\rm cig}} |\varphi|^2 \rho^{-1+b} \leq C \int_{L_{\rm cig}} |\eta \varphi|^2 \rho^{-1+b} + C \int_{L_{\rm cig}} |(1-\eta) \varphi|^2 \rho^{-1+b}. 
\]
The first piece is estimated by the long-range Poincar\'e inequality \eqref{longrangepoincare}, and we obtain the estimate
\begin{equation} \label{cigcigpoincare}
\int_{L_{\rm cig}} |\varphi|^2 \rho^{-1+b}  \leq C \int_{L_{\rm cig}} |\nabla \varphi|^2 \rho^{1+b} + C \int_{ \{ \rho \leq 2 R_0 \} } |\varphi|^2 \rho^{-1+b} ,
\end{equation}
for all functions $\varphi$ supported in $\{ \rho < \rho_0 \epsilon^{-1} \}$.

We now prove \eqref{no-mass-lost2}. Suppose by contradiction that \eqref{no-mass-lost2} fails. Then there exists a subsequence $i \rightarrow \infty$ with
\begin{equation} \label{cigacigarcubacubar}
\int_{L_{\rm cig}} |\varphi_i|^2 \rho^{-1+b}  =1, \quad  \int_{L_{\rm cig}} |\nabla \varphi_i|^2 \rho^{1+b} \rightarrow 0.
\end{equation}
Apply Rellich's lemma to extract a subsequence $\{ \varphi_n \}$ which converges in $L^2$ on the compact set $\{ \rho \leq 2 R_0 \}$. Applying \eqref{cigcigpoincare} gives
\[
\int_{L_{\rm cig}} |\varphi_m-\varphi_n|^2 \rho^{-1+b}   \leq C \int_{L_{\rm cig}} |\nabla (\varphi_m-\varphi_n)|^2 \rho^{1+b} + C \int_{ \{ \rho \leq 2 R_0 \} } |\varphi_m-\varphi_n|^2 \rho^{-1+b}  .
\]
Hence
\[
\int_{L_{\rm cig}} |\varphi_m-\varphi_n|^2 \rho^{-1+b} \rightarrow 0.
\]
Therefore $\varphi_n \rightarrow \varphi_\infty$ in $L^2_{loc}$ with $\nabla \varphi_\infty \equiv 0$. Hence $\varphi_\infty=c_\infty$ is a constant. Combining
\[
\int_{ \{ \rho \leq N \}} \rho^{-1+b}  \geq C^{-1} \int_{x=R_0}^{x=N} |x|^{-1+b} dx \geq C^{-1} N^b , \quad b>0
  \]
with \eqref{cigacigarcubacubar} leads to the contradiction
\[
c_\infty C^{-1} N^b \leq \lim \int_{ \{ \rho \leq N \}} |\varphi_i|^2 \rho^{-1+b} \leq 1
\]
once $N \rightarrow \infty$. This proves \eqref{no-mass-lost2}.

\subsection{Estimating error terms} \label{sec:est-errors}
To finish the proof as outlined at the start of Section \ref{sec:quad-est}, it only remains to prove the quadratic estimate \eqref{step2}, which reads
\begin{equation} \label{contraction2} 
\begin{aligned} 
 & \| \mathcal{Q}(u) - \mathcal{Q}(v) \|_{C^{0,\alpha}_{\beta-1}} \leq \epsilon^\iota \| u - v \|_{C^{1,\alpha}_\beta}, \\
 & u,v \in \mathcal{U} = \{ a \in \Omega^1(S^2) : \| a \|_{C^{1,\alpha}_\beta}  < \epsilon^\kappa \}. 
\end{aligned}
\end{equation}
After substituting the definition of $\mathcal{Q}$ \eqref{Q-defn}, the task then is to estimate
\[
  \mathcal{Q}(u) - \mathcal{Q}(v) = \int_0^1 {d \over ds} \bigg[ \mathcal{F}(u_s) - (d+d^\dagger) (u_s) \bigg] \, ds,
\]
where we let $u_s = su + (1-s)v$. This is bounded by
\[
\|  \mathcal{Q}(u) - \mathcal{Q}(v) \| \leq \int_0^1 \bigg\| \delta \mathcal{F}|_{u_s} (u-v) - (d+d^\dagger) (u-v) \bigg\| \, ds,
\]
and so to prove \eqref{contraction2} we will show:
\begin{equation} \label{eq:linear-estimate}
  \left\| \left( \delta \mathcal{F}|_u - (d + d^\dagger) \right) a \right\|_{C^{0,\alpha}_{\beta-1}} 
  \leq \epsilon^\iota \left\| a \right\|_{C^{1,\alpha}_\beta},
\end{equation}
for all $u,a \in \Omega^1(S^2)$ with $\| u \|_{C^{1,\alpha}_\beta} < \epsilon^\kappa$. For this, we decompose the difference into three groupings:
\begin{align*}
  \delta \mathcal{F}|_u - (d+d^\dagger) &= \bigg[ \delta \mathcal{F}|_u - \delta \mathcal{F}|_0  \bigg] + \bigg[ \delta \mathcal{F}|_0 - \delta \mathcal{F}^{\rm ref}|_0\bigg] + \bigg[ \delta \mathcal{F}^{\rm ref}|_0 - (d+d^\dagger) \bigg] \\
  &= ({\rm I}) + ({\rm II}) + ({\rm III}),
\end{align*}
where we recall $d+d^\dagger$ is induced by the reference geometry $(M_\epsilon, g_\epsilon)$ and we introduce
\[
\mathcal{F}^{\rm ref} (a) = - f_a^* \omega_\epsilon - \star f_a^* {\rm Im} \, \Omega_\epsilon
\]
which is also with respect to the reference structure $(\omega_\epsilon, \Omega_\epsilon)$ rather than the genuine Calabi-Yau structure $(\tilde{\omega}_\epsilon, \tilde{\Omega}_\epsilon)$. We now estimate each error term $({\rm I})$, $({\rm II})$, $({\rm III})$; see also e.g. \cite{ChiuLin, CGPY} for this procedure in different geometric settings.

\subsubsection{Term I} $(\delta \mathcal{F}|_u - \delta \mathcal{F}|_0)$. Recall the setup. We start from a map $f: S^2 \rightarrow M_\epsilon$ parametrizing the approximate solution $L_\epsilon \subseteq M_\epsilon$. A 1-form $u \in \Omega^1(S^2)$ produces a deformed map $f_u : S^2 \rightarrow M_\epsilon$ given by
\[
f_u(q) = \exp^{g_\epsilon}_{f(q)} \left( J \, df_q \, (h^{-1} u) \right)
\]
where $h = f^* g_\epsilon$ is the induced metric on $S^2$. Fix $q \in S^2$, take a local trivialization around $f(q) \in M_\epsilon$ which looks like $B_1 \times S^1$, and work on a local covering space $B_1 \times \mathbb{R}$. Let $\rho = \rho(f(q))$ be the weight function at this point. By Lemma \ref{lemma-metric-scale}, $\bar{g}=\rho^{-2} g_\epsilon$ is uniformly equivalent to the Euclidean metric on the local covering space. To ensure constants are independent of $\epsilon$, we will write $\mathcal{F}$ in terms of the uniform geometry given by $\bar{g} = \rho^{-2} g_\epsilon$, $\bar{h} = \rho^{-2} f^* g_\epsilon$. For this, first we introduce the notation $E_q: T_q^*\, S^2 \rightarrow M_\epsilon$
\[
 E_q( u) = \exp^{\bar{g}}_{f(q)} \left(  J df_q \, (\bar{h}^{-1} u) \right)
\]
so that
\[
f_u(q) = E_q(\rho^{-2}u).
\]
Since $\bar{g}_{ij}$ is uniformly equivalent to the Euclidean metric and its derivatives are bounded, we have uniform bounds on derivatives of the exponential map $\exp^{\bar{g}}{}_{x}(v)$ over $|v|_{\bar{g}} \leq C$. Thus 
\begin{equation} \label{bound:expmap}
  \begin{aligned}
    | D E_q(u) \cdot \xi |_{\bar{g}} &\leq C |\xi|_{\bar{h}}, \\
    |D^2 E_q(u) \cdot (\xi, \eta) |_{\bar{g}} &\leq C | \xi|_{\bar{h}} |\eta|_{\bar{h}}
    \end{aligned}
\end{equation}
for all $|u|_{\bar{h}} \leq C$. 
Now, let us rewrite $\mathcal{F}$ \eqref{F-defn} as 
\[
\mathcal{F}(u) = - \rho^2 E_q(\rho^{-2} u)^* \bar{\omega} - \star_{\bar{h}} E_q(\rho^{-2}u)^* {\rm Im} \, \bar{\Omega}_\epsilon
\]
where $\bar{\omega} = \rho^{-2} \tilde{\omega}_\epsilon$ and $\bar{\Omega} = \rho^{-2} \tilde{\Omega}_\epsilon$. By the fundamental theorem of calculus and the uniform bounds \eqref{bound:expmap},
\[
|(\delta \mathcal{F}|_u - \delta \mathcal{F}|_0){}^{\text{2-form}} a|_{\bar{g}} \leq C \rho^{-2}  |a|_{C^1(\bar{g})}  |u|_{C^1(\bar{g})}.
\]
Here $\rho^{-2}$ appears upon differentiating $E_q(\rho^{-2} u)$ twice. We undo the scaling $\bar{g} = \rho^{-2} g_\epsilon$, noting that the left-hand side is a 2-form while $a$ and $u$ are 1-forms while using $\rho \leq 1$ to obtain
\[
\| ({\rm I}) a \|_{C^0_{\beta-1}(g_\epsilon)} \leq C \rho^{-1}   \| u \|_{C^1(g_\epsilon)} \| a \|_{C^1_\beta(g_\epsilon)} .
\]
Here we also included the estimate for the 0-form part of $\mathcal{F}$ as this follows in a similar way. The H\"older norm can be estimated in a similar way. We now use that $u \in \mathcal{U}$ and conclude
\begin{equation} \label{bound-termI}
\| ({\rm I}) a \|_{C^{0,\alpha}_{\beta-1}} \leq C \epsilon^{\beta} \epsilon^\kappa \epsilon^{-1}  \|a \|_{C^{1,\alpha}_\beta}  
\end{equation}
since $\rho \geq \epsilon$.


\subsubsection{Term II} $( \delta \mathcal{F}|_0 - \delta \mathcal{F}^{\rm ref}|_0)$. We first expand using the definition of the Lie derivative.
\begin{equation} \label{liederiv}
\begin{aligned}
  \delta \mathcal{F}|_0 a &= \left. \frac{d}{ds} \right|_{s=0} 
                 \left( - f_{sa}^* \tilde{\omega}_\epsilon - \star f_{sa}^* \operatorname{Im} \tilde{\Omega}_\epsilon\right) \\
  &= - f^* L_{J a^\sharp} \tilde{\omega}_\epsilon - \star f^* L_{J a^\sharp} {\rm Im} \, \tilde{\Omega}_\epsilon.
\end{aligned}
\end{equation}
Therefore
\[
({\rm II}) a = - f^* L_{J a^\sharp} (\tilde{\omega}_\epsilon -\omega_\epsilon) - \star f^* L_{J a^\sharp} {\rm Im} \, (\tilde{\Omega}_\epsilon - \Omega_\epsilon).
\]
The Lie derivative of a tensor $\chi$ is schematically of the form
  \[
L_V \chi = \nabla V * \chi + V * \nabla \chi,
  \]
  and so we estimate
  \[
| ({\rm II}) a |_{g_\epsilon} \leq C | a |_{g_\epsilon} \, | \nabla (\tilde{\omega}_\epsilon -\omega_\epsilon ) |_{g_\epsilon} + C | \nabla^{g_\epsilon} (Ja^\sharp) |_{g_\epsilon} \, | \tilde{\omega}_\epsilon - \omega_\epsilon  |_{g_\epsilon}.
\]
By the estimate on the second fundamental form \eqref{secondfundform} and converting the extrinsic to the intrinsic via
\[
| \nabla^{g_\epsilon} (Ja^\sharp) |_{g_\epsilon} \leq C |\nabla^{g_\epsilon|_{L_\epsilon}} a|_{g_\epsilon} + C |A|_{g_\epsilon} |a|_{g_\epsilon},
\]
and using the estimate \eqref{tildeCY-nontilde2} for $\tilde{\omega}-\omega$, we obtain
\begin{equation} \label{bound-termII}
\| ({\rm II}) a \|_{C^{0,\alpha}_{\beta-1}} \leq C \epsilon^{1+\frac{1}{6}} \| a \|_{C^{1,\alpha}_\beta}.
\end{equation}
The bound on H\"older norms of $\nabla (\underline{\boldsymbol{\tilde{\omega}}}_\epsilon - \underline{\boldsymbol{\omega}}_\epsilon)$ is because if $\rho |\nabla T| + \rho^2 |\nabla^2 T| \leq C$, then $\| \nabla T \|_{C_{-1}^{0,\alpha}} \leq C$. 
  
\subsubsection{Term III} $(\delta \mathcal{F}^{\rm ref}|_0 - (d+d^\dagger) )$. We recall a classic calculation on the linearization of the special Lagrangian equation (see e.g. \cite{Marshall} for full details).

\begin{lem}
Let $(M,g,\omega,\Omega)$ be a Calabi-Yau structure and let $f: L \rightarrow M$ be a special Lagrangian submanifold with induced metric $g|_L$. Then:
\begin{equation} \label{slag-linearized}
- f^* L_{J a^\sharp} \omega - \star f^* L_{J a^\sharp} {\rm Im} \, \Omega
= (d + d^\dagger_{g|_L}) a,
\end{equation}
for all $a \in \Omega^1(L)$.
\end{lem}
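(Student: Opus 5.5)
The plan is to compute the linearization pointwise, using Cartan's formula together with the special Lagrangian conditions $f^*\omega = 0$ and $f^*{\rm Im}\,\Omega = 0$. Write $V = Ja^\sharp$. Both $\omega$ and ${\rm Im}\,\Omega$ are closed, so $L_V = d\iota_V + \iota_V d = d\iota_V$ on these forms, and pullback commutes with $d$. This gives
\[
f^* L_V \omega = d\big(f^*(\iota_V\omega)\big), \qquad f^* L_V {\rm Im}\,\Omega = d\big(f^*(\iota_V {\rm Im}\,\Omega)\big).
\]
It therefore suffices to identify the two pulled-back 1-forms $f^*(\iota_V\omega)$ and $f^*(\iota_V{\rm Im}\,\Omega)$ on $L$ in terms of $a$.

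For the first, take $X$ tangent to $L$. By \eqref{hk-cplxstr},
\[
\omega(Ja^\sharp,X) = g(J^2 a^\sharp, X) = -g(a^\sharp,X) = -a(X).
\]
Hence $f^*\iota_V\omega = -a$, and so $-f^*L_V\omega = da$.

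For the second, I will verify pointwise in the normal form \eqref{normalform}, which can be arranged at any point of $L$. Choose the orthonormal frame so that $T_pL = {\rm span}\{e_1,e_2\}$ and the forms restrict as $\omega|_L = 0$, ${\rm Re}\,\Omega|_L = d{\rm vol}$, ${\rm Im}\,\Omega|_L = 0$, with $J$ mapping $T_pL$ isometrically onto the normal space. The goal is the identity
\[
f^*(\iota_{Ja^\sharp}{\rm Im}\,\Omega) = \pm \star_L a .
\]
Granting this, we get
\[
-\star f^* L_V {\rm Im}\,\Omega = \mp \star d \star a = \pm d^\dagger a,
\]
using $d^\dagger = -\star d\star$ on 1-forms on a surface. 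This produces the $d^\dagger a$ term, in the 0-form component of the image of $\mathcal{F}$.

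The step I expect to need care is fixing the signs and orientation conventions so that the result is exactly $d^\dagger a$ and not $-d^\dagger a$. To do this I would compute explicitly with the concrete basis used in \S\ref{subsect:exchange}: take $T_pL = {\rm span}\{\partial_{x_1},\partial_\psi\}$, with $\omega = \omega_3$ and ${\rm Im}\,\Omega = \omega_2$, and evaluate $\omega_2(J_3\partial_{x_1},\partial_\psi)$ and $\omega_2(J_3\partial_\psi,\partial_{x_1})$. This is a routine check in coordinates, in the same spirit as \cite{Marshall}.

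Since everything above is tensorial at a point, and the only global input is closedness of $\omega$ and ${\rm Im}\,\Omega$, combining the two components gives \eqref{slag-linearized}.
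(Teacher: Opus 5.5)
Your proposal is correct and is the classical McLean--Marshall computation that the paper cites instead of proving. Cartan's formula on the closed forms reduces everything to the pointwise identities $f^*\iota_{Ja^\sharp}\omega=-a$ and $f^*\iota_{Ja^\sharp}{\rm Im}\,\Omega=\pm\star_L a$. The sign you deferred is $+$: in the normal form $J_3\partial_{x_1}=\partial_{x_2}$ and $J_3\partial_\psi=-\partial_{x_3}$, so for $a=a_1\,dx_1+a_\psi\,d\psi$ one gets $\iota_{Ja^\sharp}\omega_2|_L=a_1\,d\psi-a_\psi\,dx_1=\star_L a$ with the calibrated orientation $dx_1\wedge d\psi$, whence $-\star d\star a=d^\dagger a$ as required.
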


We will use this identity to understand Term (III). The same calculation as \eqref{liederiv} gives
\[
\delta \mathcal{F}^{\rm ref}|_0 a = - f^* L_{J a^\sharp} \omega_\epsilon - \star f^* L_{J a^\sharp} {\rm Im} \, \Omega_\epsilon,
\]
and it follows from \eqref{slag-linearized} that
\[
{\rm supp} \, ({\rm III} a) \subseteq \{ \epsilon^{2/5} \leq \rho \leq 2 \epsilon^{2/5} \}.
\]
This is because the reference structure \eqref{glue-collar} is genuinely Calabi-Yau outside this annulus, and the approximate solution $L$ is genuinely a special Lagrangian submanifold there (see Section \ref{sec:approxsoln}). On this collar region, we bound the error term $({\rm III})a$ by
\[
\| - f^* L_{J a^\sharp} \omega_\epsilon - \star f^* L_{J a^\sharp} {\rm Im} \, \Omega_\epsilon - (d+d^\dagger_{g_\epsilon^{\rm cig}} ) a \| + \|  (d+d^\dagger_{g_\epsilon^{\rm cig}})a -  (d+d^\dagger_{g_\epsilon|L_\epsilon})a \|.
\]
We apply \eqref{slag-linearized} to $d+d^\dagger_{g_\epsilon^{\rm cig}}$ and obtain the bound
\[
  \left|  ({\rm III}) a \right| \leq C \bigg( (|\nabla a| + |A| |a|) |\underline{\boldsymbol{\omega}}_\epsilon - \underline{\boldsymbol{\omega}}^{\rm tn}_\epsilon| + |a| |\nabla (\underline{\boldsymbol{\omega}}_\epsilon - \underline{\boldsymbol{\omega}}^{\rm tn}_\epsilon)| \bigg).
\]
The error estimate \eqref{collar-error} between the model and the global metrics on the collar region implies
\[
  \left|  ({\rm III}) a \right| \leq C  \bigg( \epsilon^{2- \frac{3}{5}}|a|  + \epsilon^{2- \frac{1}{5}}  | \nabla a |  \bigg).
\]
Since $\rho \sim \epsilon^{2/5}$, after including the H\"older norms we can obtain from here the estimate
\begin{equation} \label{bound-termIII}
\left\|  ({\rm III}) a \right\|_{C^{0,\alpha}_{\beta-1}} \leq  C  \epsilon^{2- \frac{1}{5}} \| a \|_{C^{1,\alpha}_\beta}.
\end{equation}
The bound on the H\"older norm of $\underline{\boldsymbol{\omega}}_\epsilon -  \underline{\boldsymbol{\omega}}^{\rm tn}_\epsilon$ follows from \eqref{collar-error} and the remark in Term $({\rm II})$.

\subsubsection{Tuning kappa and iota} Combining \eqref{bound-termI}, \eqref{bound-termII}, \eqref{bound-termIII}, we obtain 
\begin{equation} \label{set-kappa}
  \left\| \left( \delta \mathcal{F}|_u - (d + d^\dagger) \right) a \right\|_{C^{0,\alpha}_{\beta-1}} 
  \leq C (\epsilon^{\kappa -1+\beta}+ \epsilon^{1 + \frac{1}{6}} + \epsilon^{2 - \frac{1}{5}}) \left\| a \right\|_{C^{1,\alpha}_\beta},
\end{equation}
for all $u,a \in \Omega^1(S^2)$ with $\| u \|_{C^{1,\alpha}_\beta} < \epsilon^\kappa$. Let
\[
\kappa = 1 + |\beta| + \iota
\]
and take $0< \iota < 1+ \frac{1}{6}$. Then estimate \eqref{eq:linear-estimate} holds.

\subsection{Gromov-Hausdorff convergence}

\subsubsection{Summary of the fixed point argument}
From a map $f_0: S^2 \rightarrow M_\epsilon$ parametrizing the 2-sphere $L_\epsilon \subseteq M_\epsilon$ connecting a pair of bubbles by straight line, we have deformed $f_0$ to a nearby map
\[
f_a = \exp_{f_0} J a^\sharp
\]
where $a \in \Omega^1(S^2)$ solves
\[
\mathcal{F}(a) = 0, \quad \| a \|_{C^{1,\alpha}_\beta} < \epsilon^{\frac{3}{2}+\frac{1}{50}},
\]
where $\beta$ is negative and $|\beta| \ll 1$. We have replaced the condition $u \in \mathcal{U}$, which is $\| u \|_{\mathcal{B}} < \epsilon^\kappa$, with the explicit exponent $\frac{3}{2}+\frac{1}{50}$ consistent with the definition of $\kappa$ \eqref{small-approx-slag}. Using that $\rho \geq \epsilon$ and $|\beta| \ll 1$, we derive the smallness condition
\begin{equation} \label{small-a}
|a|_{g_\epsilon} < \epsilon^{\frac{3}{2}}, \quad |\nabla a|_{g_\epsilon} < \epsilon^{\frac{1}{2}}.
\end{equation}
Denote the image of the special Lagrangian map $f_a$ by $\tilde{L}_\epsilon$. We will show that $(\tilde{L}_\epsilon, \tilde{g}_\epsilon) \rightarrow (L_0,g_E)$ in the Gromov-Hausdorff sense, where $L_0$ is a straight line between the bubble points $p_1$ and $p_2$ on the affine base $\mathbb{T}^3/\mathbb{Z}_2$.

\subsubsection{Gromov-Hausdorff distance}
We recall the definition of the Gromov-Hausdorff distance.

\begin{defn}
  Let $(X,d_X)$ and $(Y,d_Y)$ be a pair of compact metric spaces. We write
 \[
d_{\rm GH}(X,Y)
=
\inf \left\{
\epsilon>0 \,\middle|\,
\exists \text{ an $\epsilon$-isometry } f:X\to Y
\right\}.
\]
A map $f: X \to Y$ is an $\epsilon$-isometry if for all $x_1,x_2 \in X$ then
\[
\bigl| d_X(x_1,x_2) - d_Y(f(x_1),f(x_2)) \bigr| < \epsilon,
\]
and for every $y \in Y$, there exists $x \in X$ such that
\[
d_Y (y, f(x)) < \epsilon.
\]
\end{defn}

The geometry $(\tilde{L}_\epsilon, \tilde{g}_\epsilon)$ is obtained by implicit function theorem and is not explicit, but these spaces are nearby the model $(L_\epsilon,g_\epsilon)$. Indeed, we note that by \eqref{genuine-est2} and \eqref{small-a} there holds
\[
| f_a^* \tilde{g}_\epsilon - f_0^* g_\epsilon|_{f_0^* g_\epsilon} \leq \delta_\epsilon,
\]
with $\delta_\epsilon \rightarrow 0$ as $\epsilon \rightarrow 0$. Hence if $(L_\epsilon,g_\epsilon)$ $d_{\rm GH}$-converges to $(L_0,g_E)$, then so does $(\tilde{L}_\epsilon, \tilde{g}_\epsilon)$.

\subsubsection{Convergence of the straight line spheres} Finally, we note that our construction readily implies
\[
(L_\epsilon,g_\epsilon) \overset{\epsilon \rightarrow 0}{\rightarrow} (L_0,g_E).
\]
The $\epsilon$-isometry $f: L_\epsilon \rightarrow L_0$ will be projection to the base. Let $x \in [0,1]$ be a coordinate parametrizing $L_0$.

\begin{enumerate}
\item
Bulk cylindrical region: $\{ \rho \geq 2 \epsilon^{\frac{2}{5}} \}$. Here we have coordinates $(x,e^{i \psi})$ and define
\[
f(x,e^{i \psi}) = x.
\]
The estimate \eqref{bulk2cyl} and $|\mathcal{A}| \leq C \epsilon^{-2/5}$ implies that
\[
g_\epsilon|_{L_\epsilon} = dx^2 + \epsilon^2 d \psi^2 + O(\epsilon^{3/5})
\]
holds over this region.

\item Cigar cylindrical region: $\{ \epsilon R_0 \leq \rho \leq \epsilon^{\frac{2}{5}} \}$. We again define
  \[
f(x,e^{i \psi}) = x.
  \]
  On this region $\rho=x$ and by \eqref{geps2cyl} the geometry is uniformly equivalent to
  \[
dx^2 + \epsilon^2 d \psi^2,
  \]
  so the diameter of this region of is of order $\epsilon^{\frac{2}{5}}$.
  
\item Cigar caps: $\{ \rho \leq \epsilon R_0 \}$. Here we simply define $f \equiv 0$. Since
  \[
    g_\epsilon = \epsilon^2 S^*_{\epsilon^{-1}} g^{\rm tn},
  \]
 the diameter of this region is of order $\epsilon$.
  
  \end{enumerate}

Using these properties it is routine to prove that $f: (L_\epsilon,g_\epsilon) \rightarrow (L_0,g_E)$ is an $\delta_\epsilon$-isometry, where $\delta_\epsilon \rightarrow 0$ as $\epsilon \rightarrow 0$.

\end{document}